\numberwithin{equation}{section}
\newtheorem{Theorem}{Theorem}[section]
\newtheorem*{main}{Main Theorem}
\newtheorem{Lemma}[Theorem]{Lemma}
\theoremstyle{definition}
\newtheorem{Question}[Theorem]{Question}
\newtheorem{Definition}[Theorem]{Definition}
\newtheorem{Remark}[Theorem]{Remark}
\newcommand{\del}[1]{\frac{\partial}{\partial #1}}
\newcommand{\indel}[1]{\partial/\partial #1}
\newcommand{\oneline}[6]{
\tikz{
\tikzstyle{mycir}=[circle,minimum size=0.8cm,draw]
\node[mycir, label=center:#1](u1) at (-3,0) {} ;
\node[mycir, label=center:#2](u2) at (0,0) {} ;
\node[mycir, label=center:#3](u3) at (3,0) {} ;
\node[mycir, label=center:#4](v1) at (-3,1.2) {} ;
\node[mycir, label=center:#5](v2) at (0,1.2) {} ;
\node[mycir, label=center:#6](v3) at (3,1.2) {} ;
\draw[thick] (u1)--(u2);
\draw[thick] (u2)--(u3);
\draw[thick] (u1)--++(-1,0);
\draw[thick] (u3)--++(1,0);
\draw[thick, dotted] (v1)--(u1);
\draw[thick, dotted] (v2)--(u2);
\draw[thick, dotted] (v3)--(u3);
}
}
\newcommand{\onelinwlab}[7]{
\tikz{
\tikzstyle{mycir}=[circle,minimum size=0.8cm,draw]
\node[label=center:#7] at (-5,0) {} ;
\node[mycir, label=center:#1](u1) at (-3,0) {} ;
\node[mycir, label=center:#2](u2) at (0,0) {} ;
\node[mycir, label=center:#3](u3) at (3,0) {} ;
\node[mycir, label=center:#4](v1) at (-3,1.2) {} ;
\node[mycir, label=center:#5](v2) at (0,1.2) {} ;
\node[mycir, label=center:#6](v3) at (3,1.2) {} ;
\draw[thick] (u1)--(u2);
\draw[thick] (u2)--(u3);
\draw[thick] (u1)--++(-1,0);
\draw[thick] (u3)--++(1,0);
\draw[thick, dotted] (v1)--(u1);
\draw[thick, dotted] (v2)--(u2);
\draw[thick, dotted] (v3)--(u3);
}
}
\begin{document}
\allowdisplaybreaks

\newcommand{\arXivNumber}{1804.10664}

\renewcommand{\PaperNumber}{122}

\FirstPageHeading

\ShortArticleName{Quadratic Differential Equations without Multivalued Solutions}

\ArticleName{Quadratic Differential Equations in Three Variables\\ without Multivalued Solutions: Part~I}

\Author{Adolfo GUILLOT}

\AuthorNameForHeading{A.~Guillot}

\Address{Instituto de Matem\'aticas, Universidad Nacional Aut\'onoma de M\'exico,\\ Ciudad Universitaria, Mexico City 04510, Mexico}
\Email{\href{mailto:adolfo.guillot@im.unam.mx}{adolfo.guillot@im.unam.mx}}
\URLaddress{\url{http://www.matem.unam.mx/~guillot/}}

\ArticleDates{Received May 01, 2018, in final form November 05, 2018; Published online November 11, 2018}

\Abstract{For ordinary differential equations in the complex domain, a central problem is to understand, in a given equation or class of equations, those whose solutions do not present multivaluedness. We consider autonomous, first-order, quadratic homogeneous equations in three variables, and begin the classification of those which do not have multivalued solutions.}

\Keywords{Painlev\'e property; univalence; semicompleteness; Chazy equation; Riccati equation; Kowalevski exponents}

\Classification{34M55; 34M45; 34M35}

{\small
\setcounter{tocdepth}{2}
\tableofcontents}

\section{Introduction}

In the complex domain, the solutions of an ordinary differential equation may be multivalued. At the beginning of the XX\textsuperscript{th} century, partly motivated by the success of the theory of elliptic functions and by the work of Fuchs and Poincar\'e on equations of the first order, Painlev\'e posed the problem ``to determine all the algebraic differential equations of the first order, then of the second, then of the third order etc., whose general solution is uniform''~\cite{painleve}. We will consider autonomous quadratic homogeneous ordinary differential equations in dimension three, equations of the form~$z_i'=P_i(z_1,z_2,z_3)$, $i=1,\ldots, 3$, for~$P_i$ a quadratic homogeneous polynomial; following Painlev\'e's behest, we will address the problem of classifying those which have exclusively single-valued solutions.

One motivation for this study comes from investigating the ``Painlev\'e property'' (absence of movable critical points) for nonautonomous ordinary differential equations. In one variable, after the works of Fuchs and Poincar\'e on first-order equations (see~\cite{pan-sebastiani}), Painlev\'e's achievement for equations of the second order~\cite{painleve} (corrected and completed by Gambier~\cite{gambier}) was followed by the works of Chazy~\cite{chazy} for third-order equations (completed by Cosgrove~\cite{cosgrove-chazy}) and by those of Cosgrove~\cite{cosgrovep2, cosgrovep1} for fourth-order ones. First-order equations in more than one variable have not received the same attention. For two variables, we have Garnier's work~\cite{garnier} as well as Kimura and Matuda's announcement~\cite{kimura-matuda}. For three variables, there is, among others, Exton's work~\cite{exton}, but, to our best knowledge, there has been no attempt to systematically study first-order equations in three variables.

If we have an equation of the form~$z_i'=f_i(z_1,\ldots,z_n;t)$, with~$f_i$ a polynomial of degree~$d+1$ in~$z_1,\ldots,z_n$ whose coefficients are analytic functions of~$t$, and~$f_i=\sum\limits_{j=0}^{d+1} P_{i,j}$, for~$P_{i,j}$ homogeneous of degree~$j$ in~$z_1,\ldots,z_n$ then, for~$\alpha\neq 0$, $\zeta_i=\alpha^{-1}z_i$ and~$\tau=\alpha^{d}t$, ${\rm d}\zeta_i/{\rm d}\tau=\sum\limits_{j=0}^{d+1}\alpha^{d+1-j} P_{i,j}\big(\zeta;\alpha^{d}\tau\big)$. In the limit, as~$\alpha\to 0$, we obtain the homogeneous autonomous system~${\rm d}\zeta_i/{\rm d}\tau= P_{i,d+1}(\zeta;0)$. If all the solutions to the original equation are single-valued or if it has the Painlev\'e property, the solutions to the latter will be single-valued~\cite[Section~6]{pain-BSMF}. Thus, the homogeneous and autonomous cases provide obstructions for the presence of the Painlev\'e property or the absence of multivalued solutions in the inhomogeneous and nonautonomous ones. Although simpler, these may still exhibit some complex phenomena, like the presence of \emph{essential movable singularities} (whose existence is, according to Painlev\'e~\cite{painleve}, one of the major difficulties in the subject). In this way, the analysis of quadratic homogeneous systems contributes to the fulfillment of Painlev\'e's program for first-order equations in three variables.

Homogeneous autonomous equations without multivalued solutions are also important for understanding holomorphic flows and, more generally, holomorphic actions of complex Lie groups on complex manifolds. For instance, their study is a central ingredient in the results of Ghys and Rebelo~\cite{ghys-rebelo}, from which we can extract the following statement: \emph{if~$X$ is a non-identically zero semicomplete holomorphic vector field on the surface~$M$ having an isolated singularity at~$p\in M$ and if, in some chart centered at~$p$, we have that~$X=X^{2}+X^{3}+\cdots$ for~$X^{j}$ homogeneous of degree~$j$, $X$ is, in a neighborhood of~$p$, conjugated to~$X^2$ up to a reparametrization of the solutions}. Halphen systems, which occupy a distinguished place among homogeneous autonomous equations, are essential to the understanding of holomorphic actions of~$\mathrm{SL}(2,\mathbb{C})$ on complex threefolds~\cite{guillot-halphen}.

Lastly, some quadratic homogeneous systems are historically relevant, like Euler's equation of the top~\cite[Chapter~VI]{mechanics} or Halphen's system~\cite{halphen1}. In the late~XIX\textsuperscript{th} century, Hoyer investigated, within a class containing Euler's equations, the quadratic homogeneous equations that could be solved by elliptic functions~\cite{hoyer}, and Kowalevski studied some Lotka--Volterra homogeneous equations~\cite[Chapter~VI]{audin}, a subject that foreshadowed her renowned work on the top~\cite{kowalevski}.

We will study the quadratic differential equations under their vector field form. We will consider vector fields on~$\mathbb{C}^3$ of the form~$\sum\limits_{i=1}^3 P_i(z_1,z_2,z_3)\indel{z}_i$ with~$P_i$ a quadratic homogeneous polynomial. We will say that a vector field is \emph{univalent} or \emph{semicomplete} if it does not have multivalued solutions (these notions will be made precise in Section~\ref{sec:uni-semi}).

We aspire to have a classification of the univalent quadratic homogeneous vector fields on~$\mathbb{C}^3$. There are two situations where such a classification exists. A classification of the univalent quadratic homogeneous systems that are symmetric with respect to the full permutation group in three variables follows essentially from Chazy's work (see Section~\ref{chazy-sym}). We have given a~classification of the univalent quadratic homogeneous vector fields which are divergence-free~\cite[Theorem~C]{guillot-fourier}.

In this article we present the first part of a potentially comprehensive classification. In order to state our results, let us mention some facts that will be later explained with detail. To a univalent quadratic homogeneous vector field on~$\mathbb{C}^3$ having an isolated singularity at the origin we may associate seven unordered pairs of integers called \emph{Kowalevski exponents} (Section~\ref{sec:kow}), which may be used to group them in six \emph{families} (Section~\ref{sec:families}). Our main result is a classification of the univalent vector fields in the first family (those for which the product of the two exponents in one of these pairs is equal to~$1$) and in the second one (those for which the product of the two exponents in two of these pairs is equal to~$2$).

\begin{main} The quadratic homogeneous vector fields on~$\mathbb{C}^3$ which have an isolated singularity at~$0$, are univalent, and are part of either the first or of the second family belong, up to a linear change of coordinates, to the following list $($equations~{\rm \ref{chazyalt2}} and~{\rm \ref{chazyalt1}--\ref{abel:burn8-1}} are two inequivalent equations each, one for each determination of the square root$)$.

Vector fields of Halphen type:
\begin{gather} \big[\alpha_1 x^2+(1-\alpha_1)(xy-yz+zx)\big]\del{x} + \big[\alpha_2 y^2+(1-\alpha_2)(xy+yz-zx)\big]\del{y}\nonumber\\
\qquad{} + \big[\alpha_3z^2+(1-\alpha_3)(-xy+yz+zx)\big]\del{z}, \qquad m_i=\frac{\alpha_1+\alpha_2+\alpha_3-2}{\alpha_i},\tag{I}\label{halpheneq}\\
 x(x-y)\del{x}+ y[y+3x+(n+2)z]\del{y}+\big(z^2+\beta xy+2xz\big)\del{z}, \nonumber\\
 \qquad{} \beta=\frac{8m^2n}{(mn+2n+2m)(mn-2n+2m)}, \tag{II}\label{pseudobemol}\\
 x[2x-(k+6)y]\del{x} +y[(k+6)y-kz]\del{y} +\big[kz^2+(k-2)(3y-2z)x\big] \del{z},\tag{III} \label{t4} \\
\big[(5m-6)x^2+2(m-6)xy-(7m-6)(x+y)z\big]\del{x} +\big[(m-6)y^2-(m+6)(x+y)z\big] \del{y}\nonumber\\
\qquad{} +z[4mz-(5m-6)x+3(m-6)y]\del{z}.\tag{IV}\label{t1} \end{gather}

Vector fields all of whose solutions are rational:
\begin{gather} x^2\del{x}+y[m y-(m-1)x]\del{y}\nonumber\\
\qquad{} +\frac{1}{4}\big[\big(1-p^2\big)x^2- \big[r^2-p^2+m^2\big(1-q^2\big)\big]xy+m^2\big(1-q^2\big)y^2\big]\del{z},\tag{V} \label{fq-1m-m}\\
 x[x-y+(1-n)z]\del{x}+y(y-x+nz)\del{y} +\big(z^2+\beta^{-1} xy\big)\del{z},\nonumber\\
 \qquad{} \beta=\frac{1}{8}(2n-1+q)(2n-1262-q),\tag{VI} \label{ricell-1} \\
x[x-y+(1-n)z]\del{x}+y[y-x+(n+3)z]\del{y} +\big(z^2+\beta^{-1} xy\big)\del{z}, \nonumber\\
\qquad{} \beta=-\frac{1}{4}(n+1+2q)(n+1-2q).\tag{VII} \label{ricell+2} \end{gather}

Vector fields with a first integral reducing to Riccati equations with elliptic coefficients:
\begin{gather} x(x-2y)\del{x}+y(y-2x)\del{y}\nonumber\\
\qquad{} +\frac{1}{4}\big[\big(1-p^2\big)x^2+\big(p^2+q^2-1 -r^2\big)xy+\big(1-q^2\big)y^2\big]\del{z}, \tag{VIII} \label{fq333}\\
\big(2y^2-x^2\big)\del{x}+xy\del{y}+\frac{1}{8}\big[2\big(1-p^2\big)x^2+\big(q^2-r^2\big)xy+\big(2p^2-q^2-r^2\big)y^2\big]\del{z},\tag{IX} \label{fq442} \\
x(2x-5y)\del{x}+y(y-4x)\del{y}\nonumber\\
\qquad{} +\frac{1}{4}\big[4\big(1-q^2\big)x^2+\big(4+4q^2+r^2-9p^2\big)xy+\big(1-r^2\big)y^2\big]\del{z}, \tag{X} \label{fq236} \\
x[x-y+(1-n)z]\del{x}+y[y-x+(n-2)z]\del{y} +\big(z^2+\beta^{-1} xy\big)\del{z}, \nonumber\\
\qquad{} \beta=\frac{1}{24}(2n-3+q)(2n-3-q), \tag{XI} \label{ricell-3}\\
x[x-y+(1-n)z]\del{x}+y[y-x+(n-5)z]\del{y} +\big(z^2+\beta^{-1} xy\big)\del{z}, \nonumber\\ \qquad{} \beta=\frac{1}{12}(n-3+2q)(n-3-2q),\tag{XII} \label{ricell-6} \\
x[x-y+(1-n)z]\del{x}+y[y-x+(n-3)z]\del{y} +\big(z^2+\beta^{-1} xy\big)\del{z}, \nonumber\\ \qquad{} \beta=\frac{1}{8}(n-2+q)(n-2-q), \tag{XIII} \label{ricell-4}\\
x[x-(1-q)y] \del{x}+y[(1-q)y-5x-z ]\del{y} \nonumber\\
\qquad{} + \big[z^2-(q+5)(q-3)xy+(7+q)xz\big] \del{z}, \tag{XIV} \label{t3}\\
\big[(k-1) z(x+y)-(k+1)x^2\big]\del{x} \nonumber\\
\qquad{} +\big[(k+5)y^2-(k+7)(x+y)z\big]\del{y}+z[4z+(k+1) x -(k+5)y]\del{z}.\tag{XV} \label{t2}
\end{gather}

Vector fields with a first integral reducing to constant vector fields on Abelian surfaces:
\begin{gather} \big[\big(3x^2-y^2-2xz\big)+2\beta y(x-z)\big]\del{x}+\big[2y(3x-2z)+\beta\big(y^2-3x^2\big)\big]\del{y} \nonumber\\
\qquad{}+2[(z-3x)-\beta y]z\del{z}, \qquad \beta\in\mathbb{C},\tag{XVI} \label{linsnetovf}\\
x(x -y)\del{x}+y(y-4x-z)\del{y}+\big[z^2-\big(1+2\sqrt{5}\big) xy +4xz\big]\del{z}, \tag{XVII} \label{chazyalt2}\\
 x(x -y)\del{x}+y(y-9x-z)\del{y}+\big(z^2-6 xy +14xz\big)\del{z},\tag{XVIII} \label{chazyalt4} \\
x(x -y)\del{x}+y(y-4x-z)\del{y}+\left[z^2+\frac{1}{11}\big(17+12\sqrt{3}\big) xy +4xz\right]\del{z}, \tag{XIX}\label{chazyalt1}\\
x(x -y)\del{x}+y(y-3x-z)\del{y}+\left[z^2-\frac{3}{11}\big(9+7\sqrt{3}\big) xy + 2xz\right]\del{z}, \tag{XX}\label{chazyalt3}\\
x(x -y)\del{x}+y(y-11x-z)\del{y}+\big[z^2-\big(1+3\sqrt{3}\big) xy +18xz\big]\del{z},\tag{XXI}\label{chazyalt5}\\
\left[x^2+\left(1-\frac{\sqrt{5}}{2}\right)yz\right]\del{x}+y(2x+y-2z)\del{y}+z(z-2x-y)\del{z},\tag{XXII}\label{chazyixbis}\\
\left[x^2+\left(1+\frac{3\sqrt{3}}{4}\right)yz\right]\del{x}+y(2x+y-2z)\del{y}+z(z-2x-y)\del{z},\tag{XXIII}\label{chazyxbis}\\
\frac{1}{2}\big[2x^2-\big(3\sqrt{3}+9\big)yz+\big(3+3\sqrt{3}\big)zx\big]\del{x}\nonumber\\
\qquad{} + \frac{1}{2}\big[2\big(4+\sqrt{3}\big)y^2-\big(21+9\sqrt{3}\big)yz+\big(7\sqrt{3}+9\big)zx\big]\del{y}\nonumber\\
\qquad{} + \big[\big(3+\sqrt{3}\big)z-x-\big(4+\sqrt{3}\big)y\big]z\del{z},\tag{XXIV}\label{abel:square12-1}\\
\big[3\big(1+\sqrt{3}\big)zx-4yz\sqrt{3}-x^2\big]\del{x}+ \big[3\sqrt{3}xz+\big(2\sqrt{3}-4\big)y^2+6\big(1-\sqrt{3}\big)yz\big]\del{y} \nonumber\\
\qquad{} +\big[x+\big(4-2\sqrt{3}\big)y+\big(\sqrt{3}-3\big)z \big]z\del{z},\tag{XXV}\label{abel:square12-2}\\
\big[2x^2+\big(\sqrt{3}-6\big)yz\big]\del{x}+\big[\big(5-2\sqrt{3}\big)y+\big(3\sqrt{3}-9\big)z\big]y\del{y} \nonumber\\
\qquad{} +\big[\big(3-\sqrt{3}\big)z-2x+\big(2\sqrt{3}-5\big)y\big]z\del{z},\tag{XXVI}\label{abel:square12-3}\\
\big[2\big(2-\sqrt{2}\big)yz+2\big(\sqrt{2}-1\big)zx-x^2\big]\del{x}+\big[\big(8-5\sqrt{2}\big)yz+\big(4\sqrt{2}-5\big)zx+\big(\sqrt{2}-2\big)y^2\big]\del{y}\nonumber\\
\qquad{}+ \big[x+\big(2-\sqrt{2}\big)(y-z)\big]z\del{z},\tag{XXVII}\label{abel:burn8-1}\\
\big[2\big(2+\sqrt{2}\big)x^2-\big(6+5\sqrt{2}\big)zx-\big(2-3\sqrt{2}\big)zy \big]\del{x}\nonumber\\
\qquad{}+\big[2\big(2-\sqrt{2}\big)y^2-\big(2+3\sqrt{2}\big)zx-\big(6-5\sqrt{2}\big)zy \big]\del{y}\nonumber\\
\qquad{}+\big[4z^2-2\big(2+\sqrt{2}\big)zx-2\big(2-\sqrt{2}\big)zy\big]\del{z}.\tag{XXVIII}\label{abel:burn8-2}
\end{gather}
The seven pairs of Kowalevski exponents for each one of these appear in Table~{\rm \ref{table:kow}}. For the equations to be univalent, it is necessary that the Kowalevski exponents are integers. Equations~{\rm \ref{linsnetovf}} to~{\rm \ref{abel:burn8-2}} are always univalent; for equations~{\rm \ref{halpheneq}} to~{\rm \ref{t2}}, sufficient conditions for univalence appear in Table~{\rm \ref{table:conds}} $($these give, in each case, a Zariski-dense subset of the space of parameters$)$.
\end{main}

\begin{table}\centering
\begin{tabular}{cl}
Equation & Kowalevski exponents \\ \hline \hline
\ref{halpheneq} & $(-1,-1)$, $(1,m_1)$, $(1,-m_1)$, $(1,m_2)$, $(1,-m_2)$, $(1,m_3)$, $(1,-m_3)$ \\
\ref{pseudobemol} & $(-1,-2)$, $(1,2)$, $(1,n)$, $(1,-n-1)$, $(-n,n+1)$, $(1,m)$, $(1,-m)$ \\
\ref{t4} & $(1,2)$, $(1,2)$, $(-2,-3)$, $(-2,3)$, $(1,k-1)$, $(k,1-k)$, $(1,-k)$ \\
\ref{t1} & $(-1,-2)$, $(1,2)$, $(1,3)$, $(1,-3)$, $(1,-m)$, $(1,m-1)$, $(1-m,m)$ \\ \hline
\ref{fq-1m-m} & $(1,1)$, $(1,q)$, $(1,-q)$, $(m,p)$, $(m,-p)$, $(-m,r)$, $(-m,-r)$ \\
\ref{ricell-1} & $(1,2)$, $(1,2)$, $(n,1-n)$, $(1,-n)$, $(1,n-1)$, $(-2,q)$, $(-2,-q)$ \\
\ref{ricell+2} & $(1,2)$, $(1,2)$, $(n,-2-n)$, $(-2,-n)$, $(-2,n+2)$, $(1,q)$, $(1,-q)$ \\
\hline
\ref{fq333} & $(1,1)$, $(3,p)$, $(3,-p)$, $(3,q)$, $(3,-q)$, $(3,r)$, $(3,-r)$\\
\ref{fq442} & $(1,1)$, $(2,p)$, $(2,-p)$, $(4,q)$, $(4,-q)$, $(4,r)$, $(4,-r)$ \\
\ref{fq236} & $(1,1)$, $(2,p)$, $(2,-p)$, $(3,q)$, $(3,-q)$, $(6,r)$, $(6,-r)$ \\
\ref{ricell-3} & $(1,2)$, $(1,2)$, $(3,-n)$, $(n,3-n)$, $(3,n-3)$, $(6,q)$, $(6,-q)$ \\
\ref{ricell-6} & $(1,2)$, $(1,2)$, $(6,-n)$, $(n,6-n)$, $(6,n-6)$, $(3,q)$, $(3,-q)$ \\
\ref{ricell-4} & $(1,2)$, $(1,2)$, $(4,-n)$, $(n,4-n)$, $(4,n-4)$, $(4,q)$, $(4,-q)$ \\
\ref{t3} & $(1,2)$, $(1,2)$, $(-2,3)$, $(2,3)$, $(6,q)$, $(6,-6-q)$, $(6+q,-q)$ \\
\ref{t2} & $(1,2)$, $(1,2)$, $(-1,3)$, $(1,3)$, $(6,k)$, $(6,-6-k)$, $(6+k,-k)$ \\ \hline
\ref{linsnetovf} & $(1,2)$, $(1,2)$, $(2,3)$, $(2,3)$, $(2,3)$, $(-1,3)$, $(-1, 6)$ \\
\ref{chazyalt2} & $(1,2)$, $(1,2)$, $(-2,3)$, $(-3,5)$, $(3,2)$, $(2,5)$, $(-3,10)$ \\
\ref{chazyalt4} & $(1,2)$, $(1,2)$, $(-2,3)$, $(-13,10)$, $(13,-3)$, $(2,5)$, $(2,5)$ \\
\ref{chazyalt1} & $(1,2)$, $(1,2)$, $(-2,3)$, $(-3,5)$, $(3,2)$, $(3,4)$, $(-5,12)$ \\
\ref{chazyalt3} & $(1,2)$, $(1,2)$, $(-2,3)$, $(-1,4)$, $(1,3)$, $(2,5)$, $(-5,12)$ \\
\ref{chazyalt5} & $(1,2)$, $(1,2)$, $(-2,3)$, $(-17,12)$, $(17,-5)$, $(2,5)$, $(3,4)$ \\
\ref{chazyixbis} & $(1,2)$, $(1,2)$, $(-1,3)$, $(1,3)$, $(-3,5)$, $(2,5)$, $(-3,10)$\\
\ref{chazyxbis} & $(1,2)$, $(1,2)$, $(-1,3)$, $(1,3)$, $(-3,5)$, $(3,4)$, $(-5,12)$ \\
\ref{abel:square12-1} & $(1,2)$, $(1,2)$, $(-1,3)$, $(1,4)$, $(2,3)$, $(-2,7)$, $(-7,12)$ \\
\ref{abel:square12-2} & $(1,2)$, $(1,2)$, $(-1,3)$, $(1,4)$, $(2,3)$, $(-2,7)$, $(-7,12)$ \\
\ref{abel:square12-3} & $(1,2)$, $(1,2)$, $(-1,3)$, $(1,4)$, $(2,3)$, $(-2,7)$, $(-7,12)$ \\
\ref{abel:burn8-1} & $(1,2)$, $(1,2)$, $(-1,3)$, $(1,4)$, $(2,3)$, $(-3,8)$, $(-3,8)$ \\
\ref{abel:burn8-2} & $(1,2)$, $(1,2)$, $(-1,3)$, $(1,4)$, $(2,3)$, $(-3,8)$, $(-3,8)$ \\ \hline
\end{tabular}
\caption{The seven pairs of Kowalevski exponents for each one of the equations appearing in the Main Theorem. }\label{table:kow}
\end{table}

\begin{table}\centering
\begin{tabular}{cl}
Equation & Conditions \\ \hline \hline
\ref{halpheneq} & $m_i\geq 2$ \\
\ref{pseudobemol} & $n\geq 2$ and $m\geq 2$ \\
\ref{t4} & $k\geq 2$ \\
\ref{t1} & $m \geq 2$ \\
\ref{fq-1m-m} & $m\nmid p$, $m\nmid r$, $p+r$ or~$p-r$ equals~$(q-1-2j)m$ for~$j\in\{0,\ldots,q-1\}$ \\
\ref{ricell-1} & $2\nmid q$ and $q<2n$ \\
\ref{ricell+2} & $2\nmid n$ and $n<2q$ \\
\ref{fq333} & $3\nmid p$, $3\nmid q$ and $3\nmid r$ \\
\ref{fq442} & $2\nmid p$ \\
\ref{fq236} & $6 \nmid r$, $3\nmid q$ and $2\nmid p$ \\
\ref{ricell-3} & $6\nmid q$ and $3\nmid n$ \\
\ref{ricell-6} & $6\nmid n$ and $3\nmid q$ \\
\ref{ricell-4} & $2\nmid n$ and $2\nmid q$ \\
\ref{t3} & $6\nmid q$ \\
\ref{t2} & $6\nmid k$ \\ \hline
\end{tabular}
\caption{Some sufficient conditions for univalence.}\label{table:conds}
\end{table}

The sufficient conditions for univalence in Table~\ref{table:conds} need not be necessary ones. We will give other sufficient conditions and mention the cases when these are necessary as we integrate these equations (we will not give necessary and sufficient conditions for univalence for all equations).

We have grouped the equations according to their main geometric feature (which also determines its integration). In the above equations, there is no phenomenon that is not already present in the integration of the Chazy homogeneous equations (Section~\ref{sec:chazy}).

Equations of Halphen type will be defined in Section~\ref{sec:halphen}; they include equation~\ref{halpheneq}, the family introduced by Halphen in~\cite{halphen3}. (In family~\ref{t4}, the case~$k=6$ is not of Halphen type; it has a first integral and reduces to a Riccati equation with elliptic coefficients.) In equations~\ref{halpheneq}--\ref{t1}, for most cases (the ``hyperbolic'' ones), the solutions have a natural boundary. By Corollary~D in~\cite{guillot-rebelo}, these do not have rational first integrals. Some of the equations~\ref{fq333}--\ref{t2} (which have a first integral and reduce to Riccati equations with elliptic coefficients) have a supplementary first integral; for these, the solutions are given by elliptic functions. For the vector fields that reduce to constant vector fields on Abelian surfaces (Section~\ref{sec:abelian}), there is an automorphism of the surface associated to the vector field. The essential information appears in Table~\ref{table:abel}. Although the occurrence in Chazy's classification of vector fields that reduce to constant vector fields on Abelian surfaces (Section~\ref{sec:chazy}) may be seen as anecdotal, the present classification exhibits the ubiquity of this phenomenon in dimension three.

Halphen's equations have, both in the ``hyperbolic'' and ``parabolic'' cases, movable singularities. For all other equations, the solutions are given by meromorphic functions defined in the whole complex line. Only the equations of Halphen type and those having exclusively rational solutions have one (and only one) radial orbit with two positive exponents. It corresponds to an asymptotic direction of escape for solutions defined in a neighborhood of infinity.

\begin{table}\centering
\begin{tabular}{cccccc}
Abelian surface & automorphism & order~$d$ & equations & $k$ & comments \\ \hline \hline
\begin{tabular}{c} Jacobian of \\ $\zeta^2=\xi^5-1$ \end{tabular} & \begin{tabular}{c} induced by \\ $(\zeta,\xi)\mapsto(-\zeta,\omega \xi)$ \\ $\omega^5=1$ \end{tabular} & $10$ & \begin{tabular}{c} \ref{chazyalt2} \\ \ref{chazyalt4} \\ \ref{chazyixbis} \end{tabular} & $3$ & \begin{tabular}{c} reduced \\ Chazy IX \end{tabular} \\ \hline
$E_i\times E_i$ & $(z,w)\mapsto ({\rm i}w,-{\rm i}(z+w))$ & $12$ & \begin{tabular}{c} \ref{chazyalt1} \\ \ref{chazyalt3} \\ \ref{chazyalt5} \\ \ref{chazyxbis} \end{tabular} & $5$ & \begin{tabular}{c} reduced \\ Chazy X \end{tabular} \\ \hline
$E_\rho\times E_\rho$ & $(z,w)\mapsto (-\rho z,-\rho w)$ & $6$ & \ref{linsnetovf} & $5$ & \begin{tabular}{c} Lins Neto's \\ equations \end{tabular} \\ \hline
$E_\rho\times E_\rho$ & $(z,w)\mapsto\big(\rho^2w, -\rho^2z\big)$ & 12 & \begin{tabular}{c} \ref{abel:square12-1} \\
\ref{abel:square12-2} \\ \ref{abel:square12-3} \end{tabular} & $7$ & \\ \hline
\begin{tabular}{c} Jacobian of \\ $\zeta^2=\xi\big(\xi^4-1\big)$ \end{tabular} & \begin{tabular}{c} induced by \\ $(\zeta,\xi)\mapsto \big(\lambda \zeta,\lambda^2 \xi\big)$ \\ $\lambda^8=1$ \end{tabular} & $8$ & \begin{tabular}{c} \ref{abel:burn8-1} \\ \ref{abel:burn8-2} \end{tabular} & $3$ & \begin{tabular}{c} particular case \\ of Cosgrove's \\ reduced F-V \end{tabular} \\ \hline
\end{tabular}
\caption{The Abelian surfaces with automorphisms related to equations~\ref{linsnetovf}--\ref{abel:burn8-2}. Here, $E_\mu=\mathbb{C}/\langle 1,\mu\rangle$, $\rho^3=1$. At the fixed point, the eigenvalues are~$\lambda$, a primitive~$d$\textsuperscript{th} root of unity and~$\lambda^k$.}\label{table:abel}
\end{table}

\looseness=-1 In broad lines, the strategy of the proof is, first, to use some arithmetical information to find equations that are potentially univalent and then to try to integrate them. In Section~\ref{sec:pre} we give the details on the first part and establish the terminology, conventions and notations that will be used throughout the article. At various points we will need to solve some systems of Diophantine equations. The nature of these equations makes it unreasonable to do it by hand but suitable to do it with the help of the computer. We have written some programs that solve them in the SageMath software~\cite{sagemath}. They are available as ancillary files at the \texttt{arXiv} page of this article.\footnote{\href{https://arxiv.org/abs/1804.10664}{https://arxiv.org/abs/1804.10664}.}

A complete classification of all the univalent quadratic homogeneous vector fields on~$\mathbb{C}^3$ which have an isolated singularity at~$0$ can, in principle, be achieved along the lines of this article (classify those in the third, fourth and fifth families, and then those in the seventh one) although it seems that, for this, a better knowledge of the algebraic relations between the Kowalevski exponents is needed. On the other hand, it is possible (and this would certainly condition the success of such a classification) that there is some vector field that has no evident obstruction for being univalent but whose univalence status might prove very difficult to establish (see the discussion around the history of the integration of the Chazy~IX and Chazy~X equations in Section~\ref{sec:chazy}). After all, we do not have an \emph{a priori} knowledge of the ways in which a quadratic homogeneous vector field on~$\mathbb{C}^3$ may be univalent (except in the case where it has an algebraic first integral, where our joint work with J.~Rebelo~\cite[Theorem~B]{guillot-rebelo} gives a geometric description of each level surface). This said, we do not know examples of univalent polynomial vector fields on~$\mathbb{C}^3$ without an algebraic first integral which are of not of Halphen type, and it may very well happen that all of these are.

There are univalent quadratic vector fields which do not have an isolated singularity at~$0$, and which do not fall within the scope of the Main Theorem. Among them we find historically relevant ones, like the previously mentioned examples of Euler's equations of the top, or Halphen's system~\cite{halphen1}
\begin{gather}\label{halphenclassic} (xy-yz+zx)\del{x} + (yz-zx+xy)\del{y}+ (zx-xy+yz)\del{z}. \end{gather}
Some of these appear as degenerations of some families (the above equation is the limit of equation~\ref{halpheneq} as~$m_i\to\infty$), but not all of them do. A full classification of the univalent quadratic vector fields would have to include these cases, where the obstructions for univalence given by the Kowalevski exponents will be less strong and which may require other tools and techniques.

\section{Preliminaries}\label{sec:pre}

We begin by giving some background as well as outlining the strategy of the proof of the Main Theorem. Further details appear in~\cite{guillot-fourier}. For standard facts on differential equations in the complex domain, including those concerning Riccati equations, we refer the reader to Hille's~\cite{hille} and Ince's books~\cite{ince}. (The integration of some of the Riccati equations with elliptic coefficients that appear here has been carried out in the companion article~\cite{guillot-riccati}.) The elementary facts on elliptic functions that we will use may be found in~\cite{lawden}.

\subsection{Univalence, semicompleteness}\label{sec:uni-semi}

We will say that the holomorphic vector field~$X$ on the manifold~$M$ is~\emph{univalent} (following Palais~\cite[Definition~VI, Chapter~III]{palais}) or~\emph{semicomplete} (following Rebelo~\cite[Definition~2.3]{rebelo-sing}) if for each $p\in M$ there is a domain~$\Omega_p\subset\mathbb{C}$, $0\in\Omega_p$ and a solution~$\phi\colon \Omega_p\to M$ of~$X$, $\phi(0)=p$, such that~$\phi\times \mathbb{I}\colon \Omega_p \to M\times \mathbb{C} $, $t\mapsto (\phi(t),t)$ is proper. (For autonomous systems of ordinary differential equations, where all critical points are movable, most notions related to the absence of multivaluedness of the solutions coincide.)

\subsection{Radial orbits and Kowalevski exponents}
Let~$V_n$ denote the space of quadratic homogeneous vector fields on~$\mathbb{C}^n$. For~$X\in V_n$, a \emph{radial orbit} of~$X$ is a line through the origin of~$\mathbb{C}^n$ that is invariant by~$X$; equivalently a radial orbit of~$X$ is an orbit of Euler's vector field~$E=\sum_i z_i\indel{z_i}$, different from the origin, that is invariant by~$X$. A radial orbit~$\rho$ of a quadratic homogeneous vector field~$X$ is said to be \emph{nondegenerate} if~$X$ does not vanish at any point of~$\rho$ other than the origin. A quadratic homogeneous vector field on~$\mathbb{C}^n$ is said to be \emph{nondegenerate} if all its radial orbits are nondegenerate (equivalently, if its singularity at~$0$ is an isolated one). A generic quadratic homogeneous vector field on~$\mathbb{C}^n$ is nondegenerate and has~$2^n-1$ radial orbits.

\subsubsection{The Kowalevski exponents of a radial orbit}\label{sec:kow}

There are two complex numbers that one can associate to a nondegenerate radial orbit which give a first-order approximation to the dynamics of~$X$ in the neighborhood of it and in which we can read an arithmetical obstruction for semicompleteness. These numbers, central in the ``Painlev\'e analysis'' of homogeneous equations, are referred to as~\emph{Kowalevski's exponents} (a term coined by Yoshida~\cite{yoshida}, motivated by the relevant role that they play in Kowalevski's analysis of the top~\cite{kowalevski}), \emph{Fuchs indices}, \emph{resonances}, or \emph{eigenvalues}. They are defined in a number of ways and there are several interpretations to them. We refer to~\cite{goriely} and~\cite{conte} for an overview as well as to our own account~\cite[Section~2.1]{guillot-fourier}.

Let~$X\in V_n$, $X=\sum_i P_i(z_1,\ldots,z_n)\indel{z_i}$. Let~$\rho$ be a nondegenerate radial orbit of~$X$. Let~$a=(a_1,\ldots,a_n)$ in~$\rho\setminus\{0\}$ be such that~$P_i(a)=a_i$ (it is unique for, since~$X$ is quadratic, $P_i(c a)=c^2 a_i$). A parametrization of~$\rho$ as a solution of~$X$ is~$t\mapsto (-a_1/t,\ldots,-a_n/t)$. The \emph{equation of variations} associated to this solution reads
\begin{gather*}t\frac{{\rm d}v_i}{{\rm d}t}=-\sum\limits_{j=1}^n \frac{\partial P_i}{\partial z_j}(a_1,\ldots,a_n)v_j, \end{gather*}
which may be written, for~$M_{ij}=\partial P_i/\partial z_j(a_1,\ldots,a_n)$, as~$t\xi'=-M\xi$. If~$\lambda$ is an eigenvalue of~$-M$ and $v\in\mathbb{C}^n$ is an eigenvector associated to it, $t^{\lambda}v$ is a solution to this equation. (By Euler's relation, $\sum\limits_{j=1}^n a_j \partial P_i/\partial z_j(a_1,\ldots,a_n)=2P_i(a_1,\ldots,a_n)=2a_i$, $(\mathbb{I}-M)a=-a$, and thus~$-1$ is always an eigenvalue of~$\mathbb{I}-M$.)

\begin{Definition} With the preceding notations, the \emph{Kowalevski exponents} of the nondegenerate radial orbit~$\rho$ of the vector field~$X$ are the roots of~$\det([\mathbb{I}-M]-\lambda \mathbb{I})/(\lambda+1)$.\footnote{Warning! The \emph{eigenvalues} of a radial orbit, as defined in~\cite{guillot-pointfixe} and used throughout~\cite{guillot-fourier}, differ from the Kowalevski exponents by a sign.}
\end{Definition}

The Kowalevski exponents (or \emph{exponents}, for short) of a nondegenerate radial orbit of a~semicomplete vector field are integers (for otherwise the equation of variations would have a~multivalued solution). This result also follows from the fact that, for~$E=\sum_i z_i\indel{z_i}$, \emph{if~$X$ is semicomplete then for every~$\lambda\in\mathbb{C}$ the vector field~$E-\lambda X$ is semicomplete and all of its solutions are, like those of~$E$, $2{\rm i}\pi$-periodic}~\cite[Section~2.1]{guillot-fourier}: if~$a$ is such that~$P_i(a)=a_i$ then~$E-X$ vanishes at~$a$, where its linear part is given by~$\mathbb{I}-M$ and where the periodicity of its solutions implies that the eigenvalues of this linear part are integers.

As an example, consider the vector field~$x_1^2\indel{x_1}+\sum\limits_{i=2}^n \lambda_ix_1x_i\indel{x_i}$ together with its nondegenerate radial orbit~$[1:0:\cdots:0]$. It has exponents~$-\lambda_2$, \ldots, $-\lambda_n$ and the solution~$\big({-}t^{-1},t^{-\lambda_2},\ldots,t^{-\lambda_n}\big)$, which is univalent only when~$\lambda_i\in\mathbb{Z}$ for all~$i$.

In a nondegenerate semicomplete vector field in~$V_n$ all the radial orbits are simple~\cite[Corollary~2.7]{guillot-fourier} and, in particular, for each radial orbit, the Kowalevski exponents are both nonzero (for otherwise the radial orbit would not be a simple one). A nondegenerate semicomplete vector field on~$\mathbb{C}^3$ has seven couples of nonzero integers for exponents.

\subsubsection{The associated foliations on projective spaces}\label{sec:fol} \looseness=-1 A quadratic homogeneous vector field~$X$ on~$\mathbb{C}^n$ induces a foliation~$\mathcal{F}$ on~${\mathbb{CP}}^{n-1}$. Radial orbits for~$X$ give singular points for~$\mathcal{F}$. For a nondegenerate radial orbit~$\rho$ of~$X$, the \emph{characteristic numbers} of the singularity of~$\mathcal{F}$ induced by~$\rho$ are the exponents of~$\rho$ (these characteristic numbers are defined only up to multiplication by a constant). Some obstructions for~$X$ to be semicomplete may be read directly from~$\mathcal{F}$. For example, \emph{if~$X$ is a semicomplete vector field and~$\rho$ a~nondegenerate radial orbit of~$X$, $\mathcal{F}$ is linearizable and has rational characteristic numbers at the singularity induced by~$\rho$}~\cite[Corollary~2.7]{guillot-fourier}. (Lemma~\ref{sing12} will be a very useful consequence of this.)

For~$E=\sum_iz_i\indel{z_i}$ and~$\ell$ a linear form in~$\mathbb{C}^3$, the vector fields~$X$ and~$X+\ell E$ induce the same foliation on~${\mathbb{CP}}^2$. In dimension three, for a radial orbit~$\rho$ which is nondegenerate for both~$X$ and~$X+\ell E$, the exponents with respect to~$X$ and with respect to~$X+\ell E$ coincide if and only if~$\ell(\rho)\equiv 0$ (see~\cite[Lemma~5]{guillot-pointfixe} and~\cite[Corollary~2.7]{guillot-fourier}).

For~$X\in V_3$ inducing the foliation~$\mathcal{F}$ on~${\mathbb{CP}}^2$, if~$\rho$ is a nondegenerate radial orbit of~$X$ with exponents~$(u,v)$, the number~$(u+v)^2/(uv)$ is called the \emph{Baum--Bott index} of the singular point of~$\mathcal{F}$ associated to~$\rho$. See~\cite[Chapter~3]{brunella-birational} for definitions and details.

\subsubsection{The Kowalevski exponents as functions on the space of vector fields}\label{sec:families}

In dimension three, a generic quadratic homogeneous vector field is locally characterized by its Kowalevski exponents:

\begin{Theorem}\label{eigen} Let~$\Psi\colon V_3\dashrightarrow \mathrm{Sym}^7\big(\mathrm{Sym}^2(\mathbb{C})\big)$ be the map that associates to a generic vector field on~$\mathbb{C}^3$ the two exponents of its seven radial orbits. The map~$\Psi$ has finite fibers. In other words, for a generic vector field in~$V_3$ there are, up to linear equivalence, only finitely many vector fields having the same set of exponents.
\end{Theorem}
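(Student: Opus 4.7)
The plan is to reduce the claim to a dimension count and verify it by exhibiting a vector field at which the differential of $\Psi$ attains maximal rank.

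First, observe that $\Psi$ is a rational map, regular on the Zariski open subset $U\subset V_3$ on which $X$ has seven distinct nondegenerate radial orbits. Because Kowalevski exponents are invariants of linear equivalence, $\Psi$ is constant on the orbits of the natural $\mathrm{GL}(3,\mathbb{C})$-action on $V_3$, and therefore factors through the geometric quotient $U/\mathrm{GL}(3,\mathbb{C})$. A direct check shows that the $\mathrm{GL}(3,\mathbb{C})$-stabilizer of a generic $X\in U$ is trivial: a linear automorphism commuting with $X$ must permute the seven distinct radial orbits and fix the eigenspaces of $\mathbb{I}-M$ at each of them, which forces it to be the identity once positions and exponents are in general position. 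Hence generic orbits have dimension $\dim\mathrm{GL}(3,\mathbb{C})=9$, so $U/\mathrm{GL}(3,\mathbb{C})$ has dimension $18-9=9$.

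Second, finiteness of the fibers of the induced map $\bar\Psi\colon U/\mathrm{GL}(3,\mathbb{C})\dashrightarrow \mathrm{Sym}^7(\mathrm{Sym}^2(\mathbb{C}))$ becomes equivalent to the assertion that the image of $\bar\Psi$ has dimension $9$. This is a genuine restriction: the target has dimension $14$, so the image necessarily lies in a proper algebraic subvariety cut out by the relations among Kowalevski exponents (for instance those arising from the Baum--Bott formula applied to the foliation induced on $\mathbb{CP}^2$, together with further constraints one might write down explicitly).

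Third, to produce the required $9$-dimensional image I would exhibit a specific $X_0\in U$ together with nine deformations $\delta X_1,\ldots,\delta X_9$ in $V_3$, transverse to the orbit $\mathrm{GL}(3)\cdot X_0$, and verify that the infinitesimal variations $d\Psi_{X_0}(\delta X_k)$ are linearly independent in $T_{\Psi(X_0)}\mathrm{Sym}^7(\mathrm{Sym}^2(\mathbb{C}))$. The variation of each Kowalevski exponent under a perturbation of $X$ is obtained from first-order perturbation theory for the spectrum of the Jacobian $\mathbb{I}-M$ at each radial orbit, taking into account that both the base point $a$ (defined by $P_i(a)=a_i$) and the matrix itself move with $X$. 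A convenient $X_0$ is one sufficiently symmetric that the seven base points and their exponents are available in closed form---e.g., a generic member of the Halphen family \ref{halpheneq}---paired with enough symmetry-breaking perturbations to populate the transverse directions.

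The main obstacle is the actual rank computation, which reduces to checking that a $14\times 9$ Jacobian whose entries are rational functions of the parameters of $X_0$ has full rank $9$; this is most efficiently dispatched with computer algebra. The conceptual input is minimal (eigenvalue perturbation plus transversality), but producing nine genuinely independent directions requires a careful choice of $X_0$. Given one such example, upper-semicontinuity of fiber dimension immediately upgrades the result to all generic fibers of $\bar\Psi$, yielding the theorem.
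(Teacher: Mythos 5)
Your reduction is the same one the paper uses: $\Psi$ is invariant under the $\mathrm{GL}(3,\mathbb{C})$-action, the generic orbit is nine-dimensional, and generic finiteness follows by semicontinuity of fiber dimension once one exhibits a \emph{single} vector field at which every exponent-preserving deformation comes from a linear change of coordinates. The gap is that you never actually produce that exhibition. The entire mathematical content of the theorem sits in the rank verification, and you defer it to an unperformed computer-algebra computation at an unspecified base point; moreover, your suggestion to take $X_0$ in the Halphen family~\ref{halpheneq} is unjustified --- nothing guarantees that the $14\times 9$ Jacobian has rank $9$ there, and a poor choice of base point would simply fail the test without telling you anything (the map only needs to be immersive \emph{somewhere}, and you must find such a point, not merely posit one). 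As written, the proposal is a correct plan together with an unexecuted decisive step, not a proof.

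It is worth recording how the paper carries out this verification without a brute-force Jacobian. It chooses the candidate inside the three-parameter family $X_\alpha=X_0+\big(\sum_j\alpha_jz_j\big)E$, where $X_0=z_1(2z_2-z_3)\frac{\partial}{\partial z_1}+z_2(2z_3-z_1)\frac{\partial}{\partial z_2}+z_3(2z_1-z_2)\frac{\partial}{\partial z_3}$, i.e., inside the set of vector fields inducing one fixed degree-two foliation $\mathcal{F}$ on $\mathbb{CP}^2$. A deformation preserving the exponents preserves the Baum--Bott indices of $\mathcal{F}$, and the rigidity argument of Proposition~3.2 in \cite{guillot-fourier} (a statement about the foliation, established independently) shows that any Baum--Bott-preserving deformation of this particular $\mathcal{F}$ is linear; this disposes of all transverse directions except the three parametrized by $\alpha$. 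Those last three are killed by the closed-form identity $u_i+v_i=-1/\alpha_i$ at the coordinate-axis radial orbits. To complete your version you would have to either actually perform and report the rank computation at a concrete, verified $X_0$, or import an external rigidity input as the paper does.
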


A proof of this theorem was allegedly given in~\cite[Proposition~3.2]{guillot-fourier}. However, as J.V.~Pereira brought to our attention, the proof given there is incomplete. Let us complete the proof here.

It is sufficient to exhibit a vector field such that any deformation preserving the exponents of its radial orbits comes from a linear change of coordinates. Consider the family
\begin{gather*}X_\alpha= X_0+\left(\sum \alpha_j z_j\right)\sum z_i\del{z_i},\end{gather*}
for
\begin{gather*}X_0=z_1(2z_2-z_3)\del{z_1}+z_2(2z_3-z_1)\del{z_2}+z_3(2z_1-z_2)\del{z_3},\end{gather*}
$\alpha_i\in\mathbb{C}$. Let~$X'$ be given by~$\alpha_i=1$ for all~$i$. It is a nondegenerate vector field. Let us prove that any deformation of~$X'$ within~$V_3$ that preserves its exponents is made of vector fields linearly equivalent to~$X'$. All the vector fields~$X_\alpha$ induce the same foliation~$\mathcal{F}$ on~${\mathbb{CP}}^2$. The arguments given in the proof of Proposition~3.2 in~\cite{guillot-fourier} do prove that any deformation of~$\mathcal{F}$ that preserves its Baum--Bott indexes (like one induced by a deformation of~$X'$ that preserves the exponents) is given by linear changes of coordinates. This implies that if~$X'$ admits a deformation preserving its exponents then this deformation belongs, up to a linear change of coordinates, to the family~$X_\alpha$. For the radial orbits $\rho_1 = [1 : 0 : 0]$, $\rho_2 = [0 : 1 : 0]$ and~$\rho_3 = [0 : 0 : 1]$ of~$X_\alpha$, the sum of the exponents of~$\rho_i$ is~$-1/\alpha_i$. This completes the proof of Theorem~\ref{eigen}.

\begin{Remark} Not all fibers of~$\Psi$ are finite: equation~\ref{linsnetovf} gives a one-parameter family of linearly inequivalent and nondegenerate vector fields having the same exponents.
\end{Remark}

\begin{Question} What is the degree of~$\Psi$? Given a generic set of seven complex numbers whose sum is one, Lins Neto proved that there are~$7!/21=240$ different linear equivalent classes of foliations on~${\mathbb{CP}}^2$ of degree two having them as Baum--Bott indexes~\cite{lins-fibers}. (The condition on the sum is necessary by the theorem of Baum and Bott.) Since two vector fields in~$V_3$ which induce the same foliation and which have the same eigenvalues are linearly equivalent~\cite[Lemma~5]{guillot-pointfixe}, \cite[Corollary~2.7]{guillot-fourier}, \emph{the degree of~$\Psi$ is at most~$240$}.
\end{Question}

A quadratic homogeneous vector field on~$\mathbb{C}^3$ depends upon 18 parameters. The general linear group of~$\mathbb{C}^3$ has nine parameters and acts on this space. Since a generic vector field has no linear symmetries, a quadratic homogeneous vector field depends essentially upon nine parameters. Since each one of the seven radial orbits has two exponents, there are at least five independent relations between them (actually, by Theorem~\ref{eigen}, exactly five). We can give explicitly some of these relations. Let~$X\in V_3$ have an isolated singularity at~$0$ and simple radial orbits~$\rho_1, \ldots, \rho_7$. The radial orbit~$\rho_i$ has two exponents in~$\mathbb{C}$, that will be denoted by~$u_i$ and~$v_i$. They are bound by the relations~$\mathrm{R}_0$, $\mathrm{R}_1$ and~$\mathrm{R}_2$ given respectively by
\begin{gather}\label{systBB}\sum_{i=1}^7\frac{1}{u_iv_i} = 1,\qquad \sum_{i=1}^7\frac{u_i+v_i}{u_iv_i} = 4,\qquad \sum_{i=1}^7\frac{(u_i+v_i)^2}{u_iv_i} = 16.\end{gather}
Relation~$\mathrm{R}_2$ is the Baum--Bott relation for the foliation on~${\mathbb{CP}}^2$ that a vector field in~$V_3$ naturally induces. As explained in~\cite[Example~14]{guillot-pointfixe}, the other two relations may be obtained through some Camacho--Sad and Baum--Bott relations of the foliation that a vector field in~$V_3$ naturally induces on~${\mathbb{CP}}^3$. (See also~\cite[Corollary~12]{guillot-pointfixe} for a unified proof of these relations.) These are all the relations that we know of: we are missing two independent ones. This is one of the main difficulties in the classification of the univalent vector fields on~$\mathbb{C}^3$ with an isolated singularity. Recent results by Kudryashov and Ram\'{\i}rez on a closely related subject~\cite{kud-ram} encourage a moderate optimism towards the possibility of giving these missing relations explicitly (see also~\cite{guillot-ramirez}).

A strategy for classifying nondegenerate semicomplete vector fields in~$V_3$ would be to classify the vector fields having integral exponents and then to try to integrate each one of these vector fields. For the first part, we may start by describing the sets of integers that may be exponents of a nondegenerate semicomplete vector field. The first problem we face is that we do not know the complete system of Diophantine equations we must solve, that we only know the relations~(\ref{systBB}), and that a set of seven couples of integers satisfying this system needs not be the set of exponents of a vector field. Despite this, we may still try to start by solving system~(\ref{systBB}). Let~$\xi_i=u_iv_i$ so that~$\mathrm{R}_0$ reads
\begin{gather}\label{BB0}\sum_{i=1}^7\frac{1}{\xi_i}=1.\end{gather}
A nondegenerate semicomplete quadratic homogeneous vector field on~$\mathbb{C}^3$ gives an integer solution to this ``Egyptian fractions'' problem, and in order to classify nondegenerate quadratic semicomplete vector fields on~$\mathbb{C}^3$, one may start by solving this Diophantine equation. There are infinitely many integer solutions to equation~(\ref{BB0}), which may be grouped in \emph{families}. Families give subvarieties of~$V_3$ where all nondegenerate semicomplete vector fields are to be found. As in~\cite[Section~3.3]{guillot-fourier}, a nondegenerate quadratic homogeneous vector field is said to belong to the \emph{first family} if there is a radial orbit~$\rho_i$ such that~$\xi_i=1$; it is said to belong to the \emph{second family} if there exist~$i$ and~$j$, $i\neq j$ such that~$\xi_i=2$ and~$\xi_j=2$, (so that~$1/\xi_1+1/\xi_2=1$) and, more generally, a solution~$\xi_1,\ldots,\xi_7$ to~(\ref{BB0}) is said to belong to the~$n$\textsuperscript{th} family if~$n$ is the smallest cardinality of the subsets~$J$ of~$\{1,\ldots, 7\}$ such that~$\sum\limits_{i\in J}1/\xi_i=1$. There is no sixth family. The seventh family contains only finitely many solutions of equation~(\ref{BB0}).

\subsubsection{Invariant planes} A vector field~$X$ in~$V_3$ may leave invariant (be tangent to) a plane~$\Pi$ in~$\mathbb{C}^3$. Three (out of the seven) radial orbits of~$X$ will be within this invariant plane. (We will represent invariant planes schematically by diagrams like the one appearing in Fig.~\ref{fig:base_line}.) For each one of these radial orbits, one of the two exponents corresponds to the exponent of the radial orbit of the restriction of~$X$ to~$\Pi$ (we will denote it by~$u_i$). Denoting the other exponent by~$v_i$, we have the relations
\begin{figure}
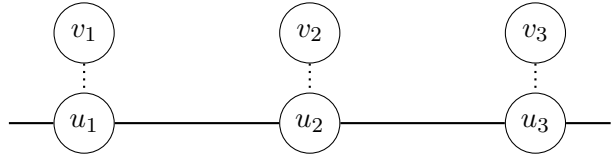
 \centering \oneline{$u_1$}{$u_2$}{$u_3$}{$v_1$}{$v_2$}{$v_3$} \vspace{-2mm} \caption{Diagram representing an invariant plane. The invariant plane contains three radial orbits. For each one, the exponent tangent to the plane is~$u_i$ and the other one~$v_i$.}\label{fig:base_line}\end{figure}
\begin{gather}\label{resv2}\frac{1}{u_1}+\frac{1}{u_2}+\frac{1}{u_3}=1,\\
\label{fakecs}\frac{v_1}{u_1}+\frac{v_2}{u_2}+\frac{v_3}{u_3}=1.\end{gather}
The first one is the relation for quadratic homogeneous vector fields on~$\mathbb{C}^2$. The second one is the Camacho--Sad relation~\cite{CS} for the foliation induced on~${\mathbb{CP}}^2$ with respect to the invariant line induced by~$\Pi$. Notice that the exponents of two of the radial orbits within~$\Pi$ determine the exponents of the third.

In order to classify the nondegenerate quadratic homogeneous vector fields on~$\mathbb{C}^2$ we may list the solutions to the Diophantine equation~(\ref{resv2}). They are
\begin{gather}\label{tess} (2,3,6), \quad (2,4,4),\quad (3,3,3), \quad (1,m,-m),\quad m\in\mathbb{Z}\setminus\{0\}. \end{gather}
Each one of these triples can be realized by a unique (up to linear changes of coordinates) vector field which is, moreover, semicomplete, and which may be explicitly solved (by elliptic functions in the first three cases, by rational ones in the other); see~\cite{briot-bouquet, garnier, ghys-rebelo}, and \cite[Example~2.12]{guillot-fourier}. We will come back to this integration in Section~\ref{im1f}.

Some of the equations that we will consider have two invariant planes. We will represent them by diagrams like the one in Fig.~\ref{fig:dia2lines}. The union of the two invariant planes will contain five radial orbits. In the diagram, the five pairs of circles joined by a dotted line give the Kowalevski exponents of each one of these five radial orbits. The two continuous lines represent the two invariant lines, and the three circles on top of each one of these lines, the three Kowalevski exponents tangent to each one of them. For example, for the diagram in Fig.~\ref{fig:dia2lines} the first invariant plane has the diagram in Fig.~\ref{fig:base_line}, the second one the diagram in Fig.~\ref{fig:secinvplane}; these intersect at the radial orbit with exponents~$(u_2,v_2)$.

\begin{figure}
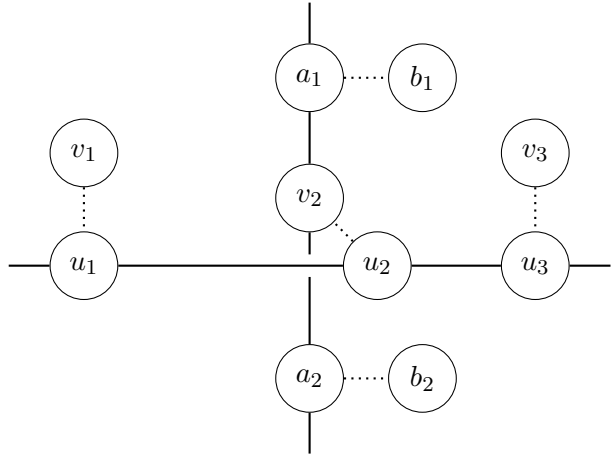
\centering
\tikz{
\tikzstyle{mycir}=[circle,minimum size=0.9cm,draw]
\node[mycir, label=center:$u_1$](u1) at (0,0) {};
\node[mycir, label=center:$u_2$](u2) at (3.9,0) {};
\node[mycir, label=center:$u_3$](u3) at (6,0) {};
\node[mycir, label=center:$v_1$](v1) at (0,1.5) {};
\node[mycir, label=center:$v_2$](v2) at (3,0.9) {};
\node[mycir, label=center:$v_3$](v3) at (6,1.5) {};
\node[mycir, label=center:$a_1$](a1) at (3,2.5) {};
\node[mycir, label=center:$a_2$](a2) at (3,-1.5) {};
\node[mycir, label=center:$b_1$](b1) at (4.5,2.5) {};
\node[mycir, label=center:$b_2$](b2) at (4.5,-1.5) {};
\draw[thick] (u1)--(u2);
\draw[thick] (u2)--(u3);
\draw[thick] (a1)--(v2);
\draw[thick] (v2)--(3,0.15);
\draw[thick] (3,-0.15)--(a2);
\draw[thick] (u1)--++(-1,0);
\draw[thick] (u3)--++(1,0);
\draw[thick] (a1)--++(0,1);
\draw[thick] (a2)--++(0,-1);
\draw[thick, dotted] (v1)--(u1);
\draw[thick, dotted] (v3)--(u3);
\draw[thick, dotted] (a1)--(b1);
\draw[thick, dotted] (a2)--(b2);
\draw[thick, dotted] (u2)--(v2);
}
\vspace{-2mm}
\caption{Diagrams for vector fields having two invariant planes.}\label{fig:dia2lines}
\end{figure}

\begin{figure}
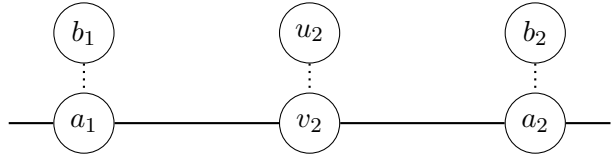
\centering \oneline{$a_1$}{$v_2$}{$a_2$}{$b_1$}{$u_2$}{$b_2$}
\caption{The second invariant plane.}\label{fig:secinvplane}
\end{figure}

Within the space of quadratic homogeneous vector fields on~$\mathbb{C}^3$, those having an invariant plane form a codimension one subvariety. From the discussion in Section~\ref{sec:families} we know that, in restriction to it, the fourteen Kowalevski exponents depend upon eight parameters: there must be at least six independent relations. We have the three relations~(\ref{systBB}) plus the relations~(\ref{resv2}) and~(\ref{fakecs}), that hold in the presence of an invariant line. Thus, even in this restricted case, there are still missing relations. In collaboration with V.~Ram\'{\i}rez, we have recently investigated the missing relations in this setting~\cite{guillot-ramirez}.

\subsection{Vector fields of Halphen type}\label{sec:halphen} The only known examples of semicomplete quadratic homogeneous vector fields on~$\mathbb{C}^3$ that do not have a first integral belong to this class. We have studied their geometry and dynamics in see~\cite[Section~3]{guillot-halphen}, where details on what follows may be found. Let~$E=\sum_i z_i\indel{z_i}$.

\begin{Definition} A quadratic homogeneous vector field~$X$ on~$\mathbb{C}^3$ is said to be \emph{of Halphen type}
if there exists a rational vector field~$C$ on~$\mathbb{C}^3$, homogeneous of degree~$0$, such that~$[C,X]=2E$.
\end{Definition}

The homogeneity of the vector fields involved in this definition implies, by Euler's relation, that~$[E,X]=X$ and~$[E,C]=-C$. The vector fields give thus a representation of the Lie algebra~$\mathfrak{sl}(2,\mathbb{C})$ into the Lie algebra of rational vector fields on~$\mathbb{C}^3$. The prototypical example is given by Halphen's equation~(\ref{halphenclassic}), introduced in~\cite{halphen1}, which is of Halphen type with respect to the vector field~$\sum_i\indel{z_i}$, this is, $\big[\sum_i\indel{z_i},X\big]=2E$.

For a vector field~$X$ of Halphen type with respect to a vector field~$C$, an \emph{adapted function} is a primitive common first integral of~$E$ and~$C$. Recall that the \emph{Schwarzian derivative} of~$f(t)$ with respect to~$t$, $\{f,t\}$ is given by
\begin{gather*}\{f,t\}=\frac{f'''}{f'}-\frac{3}{2}\left(\frac{f''}{f'}\right)^2.\end{gather*}
If~$X$ is a vector field of Halphen type, $\xi$ is an adapted function and~$\xi(t)$ the evaluation of~$\xi$ on a solution of~$X$, we may define the \emph{projective invariant}
\begin{gather}\label{projinv}\{t,\xi\}=-\frac{1}{(\xi')^2}\{\xi,t\},\end{gather}
which is a function of~$\xi(t)$ and is, moreover, independent of the chosen solution of~$X$. For example, for equation~\ref{halpheneq} we have the adapted function~$\xi=(x-y)/(y-z)$, and
\begin{gather}\label{triangle}-\frac{1}{(\xi'(t))^2}\{\xi(t),t\}=\frac{1}{2}\frac{\big(1-\frac{1}{m_1^2}\big)(\xi-1)\xi- \big(1-\frac{1}{m_2^2}\big)(\xi-1)+\big(1-\frac{1}{m_3^2}\big)\xi}{\xi^2(\xi-1)^2}, \end{gather}
for~$m_i=(\alpha_1+\alpha_2+\alpha_3-2)/\alpha_i$. We have given a criterion for univalence of vector fields of Halphen type in terms of the projective invariant~(\ref{projinv}); see~\cite[Proposition~4]{guillot-halphen}. For systems where it is given by~(\ref{triangle}), a sufficient condition for semicompleteness is that~$m_i\in\mathbb{Z}$, $m_i^2\neq 1$. In this case, a key fact is that, when restricted to the upper half plane, $t(\xi)$ uniformizes a triangle bounded by circular arcs having internal angles~$\pi/m_i$~\cite[Section~10.2]{hille}.

In the hyperbolic case ($\sum 1/m_i<1$), undoubtedly the most interesting one, the triangle can be seen as a geodesic triangle in the hyperbolic plane which, moreover, tessellates it. The associated equation of Halphen type is related to the group of direct symmetries of this tessellation, the \emph{triangle group}~$T(m_1,m_2,m_3)$. The domains where the solutions of the equation are defined have a circular natural boundary and the equations do not have a meromorphic first integral (see~\cite[Theorem~A]{guillot-halphen} or~\cite[Corollary~D]{guillot-rebelo} for different proofs of this last fact).

There are semicomplete quasihomogeneous vector fields of Halphen type in~$\mathbb{C}^3$ which are not associated to hyperbolic geometry and whose solutions are defined in domains with a fractal boundary~\cite{guillot-genhalphen} or whose complement is a Cantor set~\cite[Section~4.11]{guillot-lectures}. We do not know of any algebraic autonomous differential equation in dimension three having a univalent solution that has a natural boundary and which is not of Halphen type.

\subsection{Chazy and the homogeneous equations of the third order}\label{sec:chazy}

A classification closely related to the one that concerns us lies at the beginning of Chazy's effort to parallel Painlev\'e's studies on second-order equations for equations of the third order~\cite{chazy}. As one of the first steps in this program, Chazy sought to establish the equations of the form
\begin{gather}\label{eqchazy}\phi'''=a\phi^4+b\phi^2\phi'+c(\phi')^2+d\phi\phi'',\end{gather}
with~$a,b,c,d\in\mathbb{C}$, which have only single-valued solutions. The vector field
\begin{gather*}X=y\del{x}+z\del{y}+\big(ax^4+bx^2y+cy^2+dxz\big)\del{z},\end{gather*}
equivalent to~(\ref{eqchazy}), is quasihomogeneous when~$x$, $y$ and~$z$ are given, respectively, the weights~$1$,~$2$ and~$3$: for~$L=x\indel{x}+2y\indel{y}+3z\indel{z}$, $[L,X]=X$. Our problem is thus analogous to Chazy's.

Equation~(\ref{eqchazy}) has three \emph{similarity} solutions (the natural analogue of radial orbits, orbits where~$X$ and~$L$ are collinear). From each one of these solutions may extract one pair of exponents, which must satisfy the relations
\begin{gather*}\sum_{i=1}^3 \frac{1}{u_iv_i}=\frac{1}{6}, \qquad \sum_{i=1}^3 \frac{u_i+v_i}{u_iv_i}=-\frac{7}{6}, \qquad \sum_{i=1}^3 \frac{(u_i+v_i)^2}{u_iv_i}=\frac{49}{6}.\end{gather*}
A differential equation~(\ref{eqchazy}) depends essentially upon three parameters and thus, in this case, we know all the relations between the exponents. In order to classify the semicomplete equations in family~(\ref{eqchazy}), we may first solve this system of Diophantine equations, and then, for each solution, study each one of the differential equations it determines, in order to assess if all of its solutions are single-valued. It is essentially in this way that Chazy obtained some families of equations (some of them featuring a parameter) usually bearing the names Chazy~I through~Chazy~XII. Chazy integrated most of these equations except those labeled~IX and~X, for which we had to wait almost 90 years until Cosgrove achieved their integration~\cite{cosgrove-chazy}. Let us briefly describe the geometry of those which will be related to our classification (see~\cite{guillot-chazy} for details).

\paragraph*{Chazy~IX.} The equation is
\begin{gather}\label{chazyix}\phi'''=54\phi^4+72\phi^2\phi'+12(\phi')^2.\end{gather}
When considered as a polynomial vector field on~$\mathbb{C}^3$, it has a polynomial quasihomogeneous first integral of degree~$10$ and commutes with a quasihomogeneous polynomial vector field of degree~$4$ having the same first integral. A generic level surface~$S$ of this first integral is an affine surface endowed with two commuting vector fields. Furthermore, by the quasihomogeneity of the first integral, $S$ has a natural action of~$\mathbb{Z}/10\mathbb{Z}$ preserving the restriction of the vector field up to a constant factor. On the other hand, consider the genus-two curve $x^2-y^5-1$. It has an automorphism of order~$10$ given by~$(x,y)\mapsto (-x,\omega y)$ for~$\omega$ a fifth root of unity. This induces an automorphism of order~$10$ on the Jacobian~$J$ of the curve. This automorphism preserves two constant vector fields up to a constant factor. The generic level surface~$S$ embeds into~$J$, by mapping the vector field of the Chazy~IX equation to one of these two constant vector fields (and the one that commutes with it to the other one). This mapping is equivariant with respect to the corresponding actions of~$\mathbb{Z}/10\mathbb{Z}$.

\paragraph*{Chazy X.} Given by the two equations
\begin{gather}\label{chazyx}\phi'''=6\phi^2\phi'+\frac{3}{11}\big(9+7\sqrt{3}\big)\big(\phi^2+\phi'\big)^2,\end{gather}
one for each determination of~$\sqrt{3}$. It was integrated by Cosgrove through an ``unexpectedly complicated'' formula~\cite{cosgrove-chazy}. We may describe the accompanying geometry as follows. When seen as a quasihomogeneous polynomial vector field on~$\mathbb{C}^3$, it has a quasihomogeneous polynomial first integral of degree 12 and commutes with a quasihomogeneous polynomial vector field of degree~$6$ having the same first integral, so that the generic level surface~$S$ is endowed with two commuting vector fields and an action of~$\mathbb{Z}/12\mathbb{Z}$ that preserves each one of them up to a constant factor. On the other hand, consider the elliptic curve~$E_{\rm i}=\mathbb{C}/\Lambda$, for~$\Lambda=\langle 1,{\rm i}\rangle$. It has an order four automorphism induced by multiplication by~${\rm i}$. The square~$E_{\rm i}\times E_{\rm i}$ has an automorphism of order~$12$, given by the order four automorphism of~$E_{\rm i}$ acting diagonally and by the order-three map~$(z,w)\mapsto (w,-z-w)$, which commutes with the previous one. The cyclic group generated by these symmetries preserves two vector fields up to a constant factor each. The generic level surface~$S$ embeds equivariantly into~$E_{\rm i}\times E_{\rm i}$ by mapping the vector field of the Chazy~X equation into one of these two vector fields.

\paragraph*{Chazy XI.} The one-parameter family of equations
\begin{gather*}\phi'''=\frac{1-p^2}{2}\phi\phi''+\left(\frac{1-p^2}{2}+6\right) (\phi')^2-3\big(1-p^2\big)\phi^2\phi'+\frac{3\big(1-p^2\big)^2}{8}\phi^4.\end{gather*}
These can be reduced to Riccati equations with elliptic coefficients. We have the first integral~$g_3=4z^3-(z')^2$ for~$z=\phi'-\frac{1}{4}\big(1-p^2\big)\phi^2$ and thus~$\frac{1}{4}\big(1-p^2\big)\phi$ is a solution to the Riccati equation~$f'=f^2+\frac{1}{4}\big(1-p^2\big)\wp$, the Riccati form of Lam\'e's equation, for the Weierstrass function~$\wp$ such that~$(\wp')^2=4\wp^3-g_3$. The equation is univalent if~$p$ is not a multiple of~$6$.

\paragraph*{Chazy XII.} The one-parameter family
\begin{gather*}\phi'''=2\phi\phi''-3(\phi')^2+\frac{4}{36-k^2}\big(6\phi'-\phi^2\big)^2,\end{gather*}
probably the most famous of Chazy's equations; it is of Halphen type. It corresponds to the vector field
\begin{gather*}X=y\del{x}+z\del{y}+\left[2xz-3y^2+\frac{4}{36-k^2}\big(6y-x^2\big)^2\right]\del{z},\end{gather*}
which is quasihomogeneous with respect to the vector field~$L=x\indel{x}+2y\indel{y}+3z\indel{z}$, $[L,X]=-X$. For~$C=3\indel{x}+x\indel{y}+3y\indel{z}$, $[L,C]=-C$ and~$[C,X]=L$, so~$X$ is indeed of Halphen type. For the adapted function \begin{gather*}\xi=\left(\frac{36-k^2}{k^2}\right)\frac{\big(x^3-9xy+9z\big)^2}{\big(6y-x^2\big)^3},\end{gather*} we have~(\ref{triangle}) for~$m_1=3$, $m_2=2$ and~$m_3=k$. The equation is semicomplete if~$k\in\mathbb{Z}$, $k\geq 2$.

\subsubsection{The Chazy equations as quotients of the quadratic symmetric ones} \label{chazy-sym} If~$X$ is a quadratic homogeneous vector field on~$\mathbb{C}^3$ which is invariant under the group~$S_3$ of permutations of the three variables, for a solution~$(z_1(t),z_2(t),z_3(t))$ of~$X$, $\phi(t)=z_1(t)+z_2(t)+z_3(t)$ is a solution to an equation of the form~(\ref{eqchazy}) which depends only on~$X$. For instance, the equations in the Chazy~XII family are the quotients of the symmetric instances of equations~\ref{halpheneq} (Halphen's equations). The quotient of a univalent quadratic vector field will be a univalent equation of the form~(\ref{eqchazy}).

\subsection{Lins Neto's vector fields}\label{sec:linsnetovf}
They are those of equation~\ref{linsnetovf}. We introduced them in~\cite{guillot-lins}, where their integration was studied. The foliations that these vector fields induce on~${\mathbb{CP}}^2$ give one of the remarkable families of foliations that Lins Neto constructed in relation to the Poincar\'e problem~\cite{linsneto-pp}.

For~$\rho^3=1$, let~$\Lambda=\langle 1,\rho\rangle$ and let~$E_\rho$ be the elliptic curve~$\mathbb{C}/\Lambda$. It has an automorphism of order six induced by multiplication by~$-\rho$ on~$\mathbb{C}$. The Abelian variety~$E_\rho\times E_\rho$ has a cyclic automorphism of order six given by the diagonal action of the latter. Every constant vector field on~$\mathbb{C}^2$ induces a vector field on~$E_\rho\times E_\rho$ which is preserved, up to a constant factor, by this cyclic automorphism. All the vector fields of equations~\ref{linsnetovf} commute. They have a common polynomial homogeneous first integral of degree six. The generic level surface of this first integral can be compactified into~$E_\rho\times E_\rho$ by an embedding that maps every vector field in the family into a constant one. See~\cite{guillot-lins} for details.

\section{First family}

We now begin the classification of the nondegenerate vector fields belonging to the first family, which is characterized by the existence of a radial orbit whose exponents are either~$(1,1)$ or~$(-1,-1)$. A rough classification of the nondegenerate semicomplete vector fields within the first family was carried out in~\cite[Proposition~3.8]{guillot-fourier}.

\subsection[Exponents~$(-1,-1)$: Halphen's equations]{Exponents~$\boldsymbol{(-1,-1)}$: Halphen's equations} If a nondegenerate vector field in~$V_3$ has a radial orbit at~$[1:1:1]$ with exponents~$(-1,-1)$ and radial orbits at~$[1:0:0]$, $[0:1:0]$ and~$[0:0:1]$, it belongs to the family~\ref{halpheneq} (briefly discussed in Section~\ref{sec:halphen}); see~\cite[Proposition~3.8]{guillot-fourier}.

\subsection[Exponents~$(1,1)$: reduction to two-dimensional equations]{Exponents~$\boldsymbol{(1,1)}$: reduction to two-dimensional equations}\label{im1f}

A nondegenerate vector field in~$V_3$ with a radial orbit at~$[0:0:1]$ having exponents~$(1,1)$ is of the form
\begin{gather}\label{im2f} P(x,y)\del{x}+Q(x,y)\del{y}+\big[z^2+2z\ell(x,y)+R(x,y)\big]\del{z},\end{gather}
for~$\ell$ a linear homogeneous function and~$R$ a quadratic homogeneous one (upon replacing~$z$ by~$z+\ell(x,y)$ we may suppose that~$\ell\equiv0$). The linear projection onto~$(x,y)$ gives a quadratic homogeneous vector field, which must be semicomplete and nondegenerate. According to Section~2 in~\cite{ghys-rebelo} (see also~\cite{briot-bouquet}), this reduced equation is, up to a linear change of coordinates, one of the following ones:
\begin{enumerate}\itemsep=0pt
 \item \label{q333} $x(x-2y)\indel{x}+y(y-2x)\indel{y}$,
 \item\label{q442} $x(x-3y)\indel{x}+y(y-3x)\indel{y}$,
 \item\label{q236} $x(2x-5y)\indel{x}+y(y-4x)\indel{y}$,
 \item\label{qn} $x^2\indel{x}+y[m y-(m-1)x]\indel{y}$, $m\in\mathbb{Z}\setminus\{0\}$.
\end{enumerate}
We must, in each case, determine the polynomials~$R$ that make the vector field~(\ref{im2f}) semicomplete.

\subsubsection{Case~(\ref{q333}): equation~\ref{fq333}} By setting
\begin{gather*}R=\frac{1}{4}\big(1-p^2\big)x^2+\frac{1}{4}\big(p^2+q^2-1 -r^2\big)xy+\frac{1}{4}\big(1-q^2\big)y^2,\end{gather*}
with~$p,q,r\in\mathbb{C}$, \looseness=-1 we get the most general quadratic polynomial of the form under consideration. The exponents of the radial orbits are~$(1,1)$, $(3,p)$, $(3,-p)$, $(3,q)$, $(3,-q)$, $(3,r)$, $(3,-r)$, so we must have that~$p,q,r\in\mathbb{Z}$. The involution~$(x,y,z)\mapsto (y,x,z)$ exchanges the roles of~$p$ and~$q$, while the involution~$(x,y,z)\mapsto (x-y,-y,z)$ exchanges those of~$q$ and~$r$. (The roles played by $p$, $q$ and~$r$ are thus completely symmetric.) The reduced equation~(\ref{q333}) has the first integral~$c=xy(x-y)$, which is also a first integral of the full system. By the homogeneity of the equation we need only consider one nonzero level curve. A solution to the reduced equation is
\begin{gather*}x=\frac{\sqrt{-g_3}}{\wp},\qquad y=\frac{1}{2}\left(\frac{\wp'}{\wp}+\frac{\sqrt{-g_3}}{\wp}\right),\end{gather*} for the Weierstrass elliptic function~$\wp$ such that~$(\wp')^2=4\wp^3-g_3$ (see~\cite[Chapter~6]{lawden} for general facts about these). Substituting in equation~(\ref{im2f}), we obtain the Riccati equation with elliptic coefficients
\begin{gather*}z'=z^2-\frac{g_3}{16}\big(3-2p^2-2r^2+q^2\big)\frac{1}{\wp^2} +\frac{\sqrt{-g_3}}{8}\big(p^2-r^2\big)\frac{\wp'}{\wp^2}+\frac{1}{16}\big(1-q^2\big)\left(\frac{\wp'}{\wp}\right)^2.\end{gather*}
This is equation~$\mathrm{II}$ in~\cite{guillot-riccati}, where we have established that the equation is semicomplete whenever
\begin{itemize}\itemsep=0pt \item neither~$p$ nor~$q$ nor~$r$ are multiples of~$3$, or when
\item $3\mid p$ but~$6\nmid p$, $q^2=r^2$, $3\nmid q$, $3\nmid r$ (and the analogous conditions obtained by permu\-ting~$p$,~$q$ and~$r$).
\end{itemize}

\subsubsection{Case~(\ref{q442}): equation~\ref{fq442}} The change of coordinates~$(u,v)=(x+y,x-y)$ maps the reduced vector field to
$\big(2v^2-u^2\big)\indel{u}+uv\indel{v}$, which has the first integral~$c=v^2\big(v^2-u^2\big)$. Let
\begin{gather*}R=\frac{1}{4}\big(1-p^2\big)u^2+\frac{1}{8}\big(q^2-r^2\big)uv+ \frac{1}{8}\big(2p^2-q^2-r^2\big)v^2.\end{gather*}
The exponents of the radial orbits are~$(1,1)$, $(2,p)$, $(2,-p)$, $(4,q)$, $(4,-q)$, $(4,r)$, and~$(4,-r)$. The parameters~$q$ and~$r$ play symmetric roles, for the involution~$(u,v,z)\mapsto(u,-v,z)$ exchanges them while preserving~$p$. A solution to the reduced equation for~$c=1$ is given by~$v={\rm i}\,\mathsf{sn}(t)$, $u=v'/v$, where~$\mathsf{sn}(t)$ is Jacobi's \emph{sinus amplitudinis} elliptic function of modulus~$k^2=-1$, the one such that~$(\mathsf{sn}'(t))^2=1-\mathsf{sn}^4(t)$ (see~\cite[Chapter~2]{lawden} for details about these functions). Substituting, we obtain the Riccati equation with elliptic coefficients
\begin{gather*}w'=w^2+\frac{1}{8}\big(q^2+r^2-2\big)\mathsf{sn}^2(t)+\frac{{\rm i}}{8}\big(q^2-r^2\big)\mathsf{sn}'(t)+ \frac{1}{4}\big(1-p^2\big)\mathsf{sn}^{-2}(t).\end{gather*}
This is equation~$\mathrm{V}$ in~\cite{guillot-riccati}, where we have established that the equation is semicomplete whenever~$q$ and~$r$ are odd and either
\begin{itemize}\itemsep=0pt \item $p$ is odd or
 \item $p$ is even but not a multiple of~$4$ and~$q^2=r^2$.
\end{itemize}

\subsubsection{Case~(\ref{q236}): equation~\ref{fq236}} Set
\begin{gather*}R=\big(1-q^2\big)x^2+\frac{1}{4}\big(4+4q^2+r^2-9p^2\big)xy+\frac{1}{4}\big(1-r^2\big)y^2.\end{gather*}
The exponents of the radial orbits are~$(1,1)$, $(2,p)$, $(2,-p)$, $(3,q)$, $(3,-q)$, $(6,r)$ and~$(6,-r)$. The reduced equation has the first integral~$g_3=-4xy^2(x-y)^3$ and the solutions
\begin{gather*}x=\frac{1}{2}\frac{g_3}{\wp\wp'}, \qquad y=2\frac{\wp^2}{\wp'},\end{gather*}
for the Weierstrass elliptic function~$\wp$ such that~$(\wp')^2=4\wp^3-g_3$. Substituting, we obtain the Riccati equation
\begin{gather*}z'=z^2+\frac{1}{(\wp')^2}\left[\frac{g_3^2 }{4}\big(1-q^2\big) \wp^{-2}+\frac{g_3}{4}\big(4+4q^2+r^2-9p^2\big)\wp +\big(1-r^2\big)\wp^4\right].\end{gather*}
This is equation~$\mathrm{III}$ in~\cite{guillot-riccati}. It is semicomplete whenever~$6 \nmid r$, $3\nmid q$ and~$2\nmid p$.

\subsubsection{Case~(\ref{qn}): equation~\ref{fq-1m-m}}\label{1ricrat} Setting
\begin{gather*}R=\frac{1}{4}\big[\big(1-p^2\big)x^2- \big[r^2-p^2+m^2\big(1-q^2\big)\big]xy+m^2\big(1-q^2\big)y^2\big],\end{gather*}
the exponents of the radial orbits are found to be~$(1,1)$, $(1,q)$, $(1,-q)$, $(m,p)$, $(m,-p)$, $(-m,r)$, and~$(-m,-r)$. The reduced equation has the first integral~$x^my/(x-y)$ and the solution
\begin{gather*}x=-\frac{1}{t}, \qquad y=-\frac{t^{m-1}}{t^m-1},\end{gather*}
corresponding to a nonradial level curve. Substituting these in the original equation, we obtain the Riccati equation with rational coefficients
\begin{gather*}z'=z^2+\frac{1}{4t^2}\left[m^2\big(1-q^2\big)\left(\frac{t^{m}}{t^{m}-1}\right)^2-\big[r^2-p^2+m^2\big(1-q^2\big)\big] \left(\frac{t^{m}}{t^{m}-1}\right)+\big(1-p^2\big) \right].\end{gather*}
It will be integrated in Section~\ref{sec:ricrac1}. The results there obtained show that this Riccati equation will have exclusively single-valued solutions if~$m\nmid p$, $m\nmid r$ and if, for some~$j\in\{0,\ldots,q-1\}$, either~$p-r=(q-1-2j)m$ or~$p+r=(q-1-2j)m$. Moreover, in these cases, all the solutions will be rational ones.

\section{Second family}
This second family is characterized by the existence of two radial orbits such that the exponents of each one of them are either~$(1,2)$ or~$(-1,-2)$. We will refer to these two radial orbits as the \emph{distinguished} ones. We begin by establishing two lemmas.

\begin{Lemma}\label{sing12} In a degree two foliation on~${\mathbb{CP}}^2$ a singularity with exponents~$[1:2]$ is linearizable if and only if the line tangent to the exponent~$1$ is invariant.\end{Lemma}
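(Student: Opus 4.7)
My plan is to reduce both sides of the equivalence to the same condition on a single coefficient of a polynomial local model for $\mathcal{F}$. Let $p$ be the singularity and let $\ell$ be the projective line through $p$ tangent at $p$ to the eigenvector of exponent~$1$. Choose an affine chart of~${\mathbb{CP}}^2$ in which $p$ is the origin, $\ell=\{y=0\}$, and $\partial_x$ is the exponent-$1$ eigenvector. Because $\mathcal{F}$ has degree two, it is defined in this chart by a polynomial vector field $X=P\del{x}+Q\del{y}$ with $\deg P,\deg Q\leq 2$; after rescaling and performing a linear change of the form $x\mapsto x+\lambda y$ (which preserves $\ell$), the linear part of $X$ can be taken to be $x\del{x}+2y\del{y}$. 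Writing $Q=2y+\beta x^2+\alpha xy+\gamma y^2$ one obtains $Q(x,0)=\beta x^2$, so the (globally algebraic) line $\ell$ is $\mathcal{F}$-invariant if and only if $\beta=0$.

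For the linearizability side, the pair of eigenvalues $(1,2)$ lies in the Poincaré domain, and the only resonance relation is $\lambda_2=2\lambda_1$, which produces $x^2\del{y}$ as the unique resonant monomial in any degree. By Poincaré--Dulac normalization, whose convergence is guaranteed in the Poincaré domain, $X$ is locally analytically conjugate to a normal form $x\del{x}+(2y+dx^2)\del{y}$ for some constant $d$; this normal form is linear precisely when $d=0$, so $X$ is linearizable at $p$ if and only if $d=0$.

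It remains to identify $d$ with $\beta$. A direct computation of the order-two part of the normalizing change of variables (solving the cohomological equation at order two) shows that every non-resonant quadratic monomial in $P$ and in $Q$ can be removed by a suitable choice of the quadratic part of the coordinate change, while the coefficient of $x^2\del{y}$ is unaffected; the higher-order steps of the Poincaré--Dulac iteration alter only terms of degree $\geq 3$, so after the full normalization one has $d=\beta$. The main step demanding attention is this identification of the resonant coefficient, but the cohomological computation at order two is routine. Combining the three reductions yields the lemma.
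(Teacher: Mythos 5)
Your argument is in substance the paper's own proof: both write the foliation in an adapted affine chart with diagonal linear part $x\,\partial/\partial x+2y\,\partial/\partial y$, observe that $x^2\,\partial/\partial y$ is the unique resonant monomial so that, by Poincar\'e--Dulac in the Poincar\'e domain, linearizability is equivalent to the vanishing of its coefficient, and identify that coefficient with the obstruction to invariance of $\{y=0\}$. One misstatement should be corrected: a degree-two foliation of ${\mathbb{CP}}^2$ is \emph{not} in general given in an affine chart by a vector field with $\deg P,\deg Q\leq 2$; the correct statement is that $P$ and $Q$ have degree at most $3$ with cubic parts of the radial form $xg$ and $yg$ for a common homogeneous quadratic $g$ (this is exactly the form the paper's proof uses, via the terms $x\big[b_1x^2+b_2xy+b_3y^2\big]$ and $y\big[b_1x^2+b_2xy+b_3y^2\big]$). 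The error is harmless here, because $Q_3=yg$ vanishes on $\{y=0\}$, so one still has $Q(x,0)=\beta x^2$, invariance of $\ell$ is still equivalent to $\beta=0$, and the order-two resonant coefficient is still $\beta$; but this is precisely the point where the degree-two hypothesis enters (a genuine $x^3$ term in $Q$ would break the equivalence, and the lemma fails for higher-degree foliations), so the normal form should be stated correctly rather than truncated at degree two.
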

\begin{proof} The most general foliation of degree two on~${\mathbb{CP}}^2$ having a nondegenerate singularity at $(0,0)$, whose linearization has exponents~$[1:2]$ and such that the coordinate axes are tangent to the eigenspaces of the linear part is given by
\begin{gather*}\big(2y+a_1x^2+a_2xy+a_3y^2+y\big[b_1x^2+b_2xy+b_3y^2\big]\big){\rm d}x \\
\qquad{}-\big(x+a_4x^2+a_5xy+a_6y^2+x\big[b_1x^2+b_2xy+b_3y^2\big]\big){\rm d}y.
\end{gather*}
The term~$a_1x^2$ is a resonant one and we must have~$a_1=0$ if the singularity is linearizable. In this case~$y=0$ is an invariant line.
\end{proof}

\begin{Lemma}\label{notwopos} In a nondegenerate semicomplete vector field in~$V_3$, if the exponents of two radial orbits are both~$(-1,-2)$, there exists a third radial orbit whose exponents are either~$(1,2)$ or~$(1,1)$.
\end{Lemma}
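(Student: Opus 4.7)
The plan is to associate to each of the two distinguished radial orbits $\rho_1,\rho_2$ with exponents $(-1,-2)$ a canonical invariant plane of $X$ through it, and then analyze the configuration of these two planes. Since $X$ is semicomplete, the foliation $\mathcal{F}$ that it induces on $\mathbb{CP}^2$ is linearizable at the singularity $[\rho_i]$, whose characteristic numbers are $[1:2]$. By Lemma~\ref{sing12}, the eigendirection of normalized characteristic number $1$ at $[\rho_i]$ is the tangent direction of an invariant algebraic line $\ell_i\subset\mathbb{CP}^2$; identifying the characteristic numbers $[1:2]$ with the Kowalevski exponents $(-1,-2)$, this is the eigendirection of Kowalevski exponent $-1$. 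Taking the cone over $\ell_i$ yields a plane $\Pi_i\subset\mathbb{C}^3$ invariant by $X$, containing $\rho_i$, whose tangent-to-plane exponent at $\rho_i$ equals $-1$. Being determined by a point and a tangent direction, each $\Pi_i$ is unique.

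Next I would treat the cases $\Pi_1=\Pi_2$ and $\Pi_1\neq\Pi_2$ separately. In the first case, the three tangent exponents of $X|_{\Pi_1}$ would be $(-1,-1,u)$, and relation~(\ref{resv2}) would force $u=1/3$, contradicting the integrality of the tangent exponents imposed by the classification~(\ref{tess}) of semicomplete $2$-dimensional quadratic homogeneous vector fields. So $\Pi_1\neq\Pi_2$, and their intersection is an invariant line through the origin, hence a radial orbit $\rho_3$ of $X$.

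Before concluding I would exclude $\rho_3\in\{\rho_1,\rho_2\}$. If, say, $\rho_3=\rho_1$, then $\rho_1\subset\Pi_2$; by the uniqueness of $\Pi_1$ the tangent exponent to $\Pi_2$ at $\rho_1$ cannot equal $-1$, so it must equal $-2$, and relation~(\ref{resv2}) in $\Pi_2$ would then force the remaining tangent exponent to be $2/5$, again contradicting~(\ref{tess}). The case $\rho_3=\rho_2$ is symmetric. Hence $\rho_3$ is a genuinely new radial orbit, lying in both planes. In $\Pi_1$, the tangent exponents at its three radial orbits form a triple $(-1,a,b)$ that must appear in~(\ref{tess}); the only integer triple in the list containing a $-1$ is $(1,1,-1)$, forcing $a=b=1$, so in particular the tangent exponent of $\rho_3$ to $\Pi_1$ is $1$. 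The same argument applied to $\Pi_2$ gives $1$ as the tangent exponent of $\rho_3$ to $\Pi_2$. Since $\Pi_1\neq\Pi_2$, these two tangent directions at $\rho_3$ are linearly independent; they are therefore the two eigendirections of the linearization at $\rho_3$, whose Kowalevski exponents are consequently $(1,1)$.

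The principal delicacy is the bookkeeping that identifies, among the two Kowalevski exponents $(-1,-2)$, the one ($-1$) whose eigendirection is tangent to the invariant line produced by Lemma~\ref{sing12}; once this correspondence is fixed, the rest reduces to a short inspection of the finite list~(\ref{tess}).
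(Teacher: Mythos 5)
Your proof is correct, but it follows a genuinely different route from the paper's. The paper's argument is purely arithmetic: it writes out relation $\mathrm{R}_1$, deduces $\sum\limits_{i=3}^{7}(1/u_i+1/v_i)=7$, and counts how many of the ten remaining exponents must equal $1$ or $2$ (obtaining $4n_1+n_2\geq 22$ with $n_1+n_2\leq 10$, hence $n_1\geq 4$), concluding that if no orbit has exponents $(1,1)$ then one of the $1$'s is forced to share a radial orbit with one of the $2$'s. You instead import the geometric machinery surrounding Lemma~\ref{sing12}: semicompleteness makes the induced foliation linearizable at the two singularities with characteristic numbers $[1:2]$, so each carries a canonical invariant plane tangent to the exponent $-1$; relation~(\ref{resv2}) together with the list~(\ref{tess}) kills the configurations where the planes coincide or where one distinguished orbit lies in the other's plane, forces the tangent exponents along each plane to be $(-1,1,1)$, and the two independent eigendirections at the intersection orbit then yield exponents $(1,1)$ there. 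Both routes are sound, and your bookkeeping of which Kowalevski exponent corresponds to the ``exponent $1$'' of Lemma~\ref{sing12} is the correct one (it matches how the paper uses that lemma elsewhere). What your approach buys is a strictly stronger conclusion --- the third orbit always has exponents $(1,1)$, not merely $(1,2)$ or $(1,1)$, so this case would collapse directly onto the first family of Section~\ref{im1f} --- at the price of invoking the linearizability result and the two-dimensional classification, whereas the paper's counting argument uses nothing beyond integrality of the exponents and relation $\mathrm{R}_1$.
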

\begin{proof} Under the hypothesis, there are seven nondegenerate radial orbits, whose exponents, $(u_i,v_i)$, are nonzero integers. Relation $\mathrm{R}_1$ reads
\begin{gather*}-\frac{1}{1}-\frac{1}{2}-\frac{1}{1}-\frac{1}{2}+\sum_{i=3}^7
\left(\frac{1}{u_i} +\frac{1}{v_i}\right)=4,\end{gather*}
and thus $\sum\limits_{i=3}^7 (1/u_i+1/v_i) =7$. If among the ten numbers~$u_3,\ldots, u_7,v_3,\ldots,v_7$ we have~$n_1$ times~$1$ and~$n_2$ times~$2$ then, since for the remaining ones~$u_i\geq 3$ if~$u_i>0$ and since the sum of the positive terms in~$\sum\limits_{i=3}^7 (1/u_i+1/v_i)$ is at least~$7$,
\begin{gather*}\frac{n_1}{1}+\frac{n_2}{2}+\frac{10-(n_1+n_2)}{3}\geq 7,\end{gather*}
and thus $4n_1+n_2\geq 22$. Since~$n_1+n_2\leq 10$, $n_1\geq 4$. Suppose that~$(u_i,v_i)\neq (1,1)$ for all~$i$. We have that~$n_1<6$ (for otherwise one radial orbit will have its two exponents equal to~$1$). If~$n_1=5$ then~$n_2\geq 2$, and the~$n_1$ exponents~$1$ must belong to different radial orbits: one of them must pair with one of the~$2$'s. If~$n_1=4$ then~$n_2=6$ and a similar reasoning applies.
\end{proof}

Let us now begin the classification in the second family. Cases appear according to the signs of the exponents of the two distinguished radial orbits and to the configuration of the invariant planes guaranteed by Lemma~\ref{sing12}.

\subsection[The plane associated to one of the distinguished radial orbits does not contain the other radial orbit and vice-versa]{The plane associated to one of the distinguished radial orbits\\ does not contain the other radial orbit and vice-versa}\label{2fnc}

We will now study the cases when, for one of the two distinguished radial orbits, the invariant plane given by Lemma~\ref{sing12} does not contain the other radial orbit and vice-versa (in particular, these invariant planes are different).

\begin{figure}
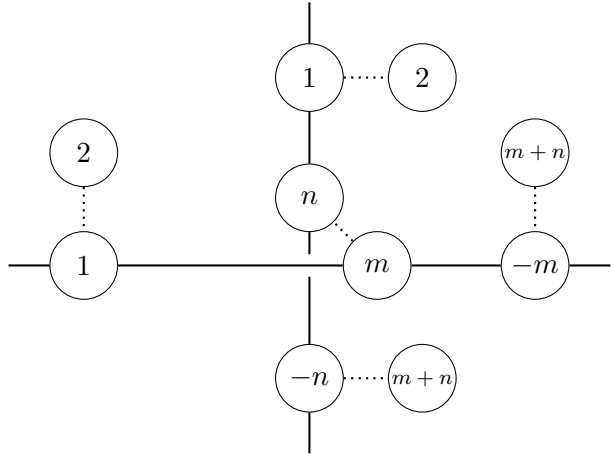
\centering
\tikz{
\tikzstyle{mycir}=[circle,minimum size=0.9cm,draw]
\node[mycir, label=center:$1$](u1) at (0,0) {};
\node[mycir, label=center:$m$](u2) at (3.9,0) {};
\node[mycir, label=center:$-m$](u3) at (6,0) {};
\node[mycir, label=center:$2$](v1) at (0,1.5) {};
\node[mycir, label=center:$n$](v2) at (3,0.9) {};
\node[mycir, label=center:\scriptsize{$m+n$}](v3) at (6,1.5) {};
\node[mycir, label=center:$1$](a1) at (3,2.5) {};
\node[mycir, label=center:$-n$](a2) at (3,-1.5) {};
\node[mycir, label=center:$2$](b1) at (4.5,2.5) {};
\node[mycir, label=center:\scriptsize{$m+n$}](b2) at (4.5,-1.5) {};
\draw[thick] (u1)--(u2);
\draw[thick] (u2)--(u3);
\draw[thick] (a1)--(v2);
\draw[thick] (v2)--(3,0.15);
\draw[thick] (3,-0.15)--(a2);
\draw[thick] (u1)--++(-1,0);
\draw[thick] (u3)--++(1,0);
\draw[thick] (a1)--++(0,1);
\draw[thick] (a2)--++(0,-1);
\draw[thick, dotted] (v1)--(u1);
\draw[thick, dotted] (v3)--(u3);
\draw[thick, dotted] (a1)--(b1);
\draw[thick, dotted] (a2)--(b2);
\draw[thick, dotted] (u2)--(v2);
}
\caption{The configuration of Section~\ref{2neg2lines}.}\label{fig:222}
\end{figure}

\subsubsection{Positive exponents for both distinguished radial orbits} \label{2neg2lines}

We have the diagram in Fig.~\ref{fig:222} with $m$, $n\in\mathbb{Z}$. By taking~$\{x=0\}$ and $\{y=0\}$ as invariant planes and fixing the two radial orbits with~$\xi_i=2$ at~$[0:1:0]$ and $[1:0:0]$, we find the vector field
\begin{gather}\label{genrtwod}x\left(x-y+[1-n]z\right)\del{x}
+y\left(y-x+[1-m]z\right)\del{y} +\left(z^2+\alpha xy\right)\del{z}, \end{gather}
for some~$\alpha\in\mathbb{C}$. There are two further radial orbits whose exponents are, respectively, $(p,q)$ and~$(p,-q)$, for~$p$ and~$q$ given by
\begin{gather*}\frac{1}{\kappa}+\frac{1}{p}=\frac{1}{2},\qquad q^2=\frac{\alpha (m-n)^2-8\kappa}{\alpha(2-\kappa)^2},\end{gather*}
for~$\kappa=m+n$. In particular,
\begin{gather}\label{whichkappa}(\kappa,p)\in\{(1,-2), (-2,1), (4,4), (3,6), (6,3)\}.\end{gather}
These will give, respectively, equations~\ref{ricell-1}, \ref{ricell+2}, \ref{ricell-4}, \ref{ricell-3} and~\ref{ricell-6}. The image of~(\ref{genrtwod}) under~$(x,y,z)\mapsto (xy,z)=(w,z)$ is
\begin{gather*}(2-\kappa)wz\del{w}+\big(z^2+\alpha w\big)\del{z}.\end{gather*}
In particular, we have the second-order equation
\begin{gather}\label{lydg2}z''=(4-\kappa) zz'-(2-\kappa) z^3,\end{gather}
which is independent of~$\alpha$. From the solutions to these equations, the solutions to the original ones may be obtained by solving the (independent) Riccati equations
\begin{gather} \nonumber x' = x^2+(1-n)z(t)x-w(t),\\ \label{dosrics2} y' = y^2+(1-m)z(t)y-w(t). \end{gather}
For the first one, setting~$v=x+\frac{1}{2}(1-n)z(t)$ we have~$v'=v^2+A$, for
\begin{gather}\label{riccA} A= \frac{1}{4}\big(1-n^2\big)z^2+\left(\frac{1}{2}\alpha[1-n]-1\right)w;
\end{gather} there is an analogous equation for~(\ref{dosrics2}). Let us integrate the equations corresponding to the values of~$\kappa$ in~(\ref{whichkappa}).
\paragraph{Equation~\ref{ricell-3}, $\kappa=3$.} A solution to~(\ref{lydg2}) is~$z=-\wp'/\wp$ for~$\wp$ the Weierstrass elliptic function~$\wp$ such that~$(\wp')^2=4\wp^3-g_3$. For~(\ref{riccA}),
\begin{gather*}A=\frac{1}{4}\big(1-q^2\big)\wp-\frac{1}{4}\big(1-n^2\big)\frac{g_3}{\wp^2}.\end{gather*}
The corresponding Riccati equation is equation~$\mathrm{I}$ in~\cite{guillot-riccati}. It is semicomplete if~$6\nmid q$ and~$3\nmid n$. Since~$n+m=3$, condition~$3\nmid n$ implies that~$3\nmid m$ and thus equation~(\ref{dosrics2}) is also semicomplete under these conditions.
\paragraph{Equation~\ref{ricell-6}, $\boldsymbol{\kappa=6}$.} A solution to~(\ref{lydg2}) is~$z=\frac{1}{2}\wp'/\wp$, $(\wp')^2=4\wp^3-g_3$, and thus, for~(\ref{riccA}),
\begin{gather*}A=\frac{1}{4}\big(1-n^2\big)\wp-\frac{1}{4}\big(1-q^2\big)\frac{g_3}{\wp^2}.\end{gather*}
The corresponding Riccati equation is again equation~$\mathrm{I}$ in~\cite{guillot-riccati}. It is semicomplete if~$6\nmid n$ and~$3\nmid q$. Since~$n+m=6$, condition~$6\nmid n$ implies that~$6\nmid m$ and thus equation~(\ref{dosrics2}) is also semicomplete under these conditions.
\paragraph{Equation~\ref{ricell-4}, $\boldsymbol{\kappa=4}$.} A solution to~(\ref{lydg2}) is~$z={\rm i} \mathsf{sn}( t)$, for the Jacobi elliptic function~$\mathsf{sn}(t)$ of modulus~$k^2=-1$, the one such that~$(\mathsf{sn}'(t))^2=1-\mathsf{sn}^4(t)$. Accordingly, in~(\ref{riccA}),
\begin{gather*} A = \frac{1}{8}\big[\big(n^2+q^2-2\big)\mathsf{sn}^2( t)+\big(q^2-n^2\big){\rm i}\,\mathsf{sn}'( t)\big] \\
\hphantom{A}{} =\frac{1}{16}\big(1-n^2\big)(\mathsf{cn}(t)+{\rm i}\,\mathsf{dn}(t))^2 +\frac{1}{16}\big(1-q^2\big)(\mathsf{cn}(t)-{\rm i}\,\mathsf{dn}(t))^2,\end{gather*}
for the other Jacobi elliptic functions~$\mathsf{cn}(t)$ and~$\mathsf{dn}(t)$ of the same modulus. This is equation~$\mathrm{IV}$ in~\cite{guillot-riccati}. It is semicomplete in all the cases where both~$n$ and~$q$ are odd and in the case where~$4\nmid q$, $4\nmid n$ and~$q^2=n^2$. Likewise, equation~(\ref{dosrics2}) will be semicomplete in all the cases where both~$m$ and~$q$ are odd and in the case where~$4\nmid q$, $4\nmid m$ and~$q^2=m^2$. Thus, equation~\ref{ricell-4} is semicomplete in the cases where~$n$, $m$ and~$q$ are all odd and in the special case~$m=2$, $n=2$ and~$q=\pm 2$.

\paragraph{Equation~\ref{ricell-1}, $\boldsymbol{\kappa=1}$.} For any quadratic polynomial~$u$, $z=-u'/u$ is a solution to~(\ref{lydg2}) and hence, for~(\ref{riccA}),
\begin{gather}\label{tojs}A=\frac{1}{4}\big(1-n^2\big)\left[\frac{(u')^2-2uu''}{u^2}\right]+\frac{1}{8}\big(1-q^2\big)\frac{u''}{u}.\end{gather}
If~$n=-1$ then~$(-n,m+n)=(1,1)$, bringing us to back to Section~\ref{im1f}; if~$n=1$ then~$m=0$, which does not correspond to a nondegenerate vector field. We may thus suppose that~$n^2\neq 1$. This equation will be studied in Section~\ref{sec:ricrac2}, where we will see that it has exclusively single-valued solutions (actually, rational ones) if~$q$ is odd and~$q<2n$.

\paragraph{Equation~\ref{ricell+2}, $\boldsymbol{\kappa=-2}$.} For a quadratic polynomial~$u$, $z=-\frac{1}{2}u'/u$ is a solution to~(\ref{lydg2}). For~(\ref{riccA}),
\begin{gather*}A=\frac{1}{4}\big(1-q^2\big)\left[\frac{(u')^2-2uu''}{u^2}\right]+\frac{1}{8}\big(1-n^2\big)\frac{u''}{u}.\end{gather*}
Since~$p=1$, $q^2=1$ would imply the existence of a radial orbit with exponents~$(1,1)$, placing us in the setting of Section~\ref{im1f}, so we may suppose that~$q^2\neq 1$. The Riccati equation we obtain is the very same one associated to~(\ref{tojs}), that will be analyzed in Section~\ref{sec:ricrac2}. From the analysis there done, the Riccati equation has only single-valued solutions (actually, rational ones) whenever~$n$ is odd and~$n<2q$.

\begin{figure}
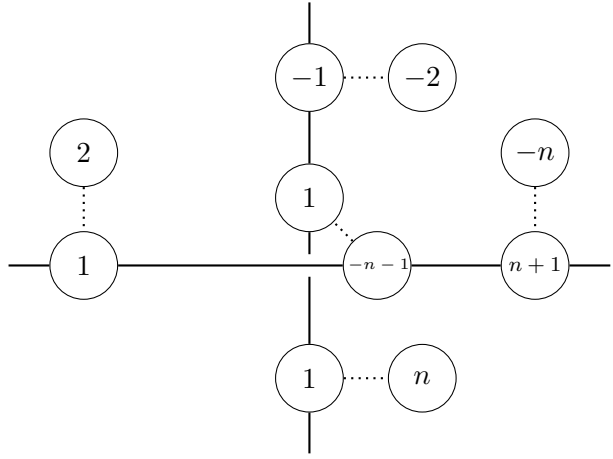
\centering
\tikz{
\tikzstyle{mycir}=[circle,minimum size=0.9cm,draw]
\node[mycir, label=center:$1$](u1) at (0,0) {};
\node[mycir, label=center:\tiny{\hspace{0.5ex}$-n-1$}](u2) at (3.9,0) {};
\node[mycir, label=center:\scriptsize{$n+1$}](u3) at (6,0) {};
\node[mycir, label=center:$2$](v1) at (0,1.5) {};
\node[mycir, label=center:$1$](v2) at (3,0.9) {};
\node[mycir, label=center:$-n$](v3) at (6,1.5) {};
\node[mycir, label=center:$-1$](a1) at (3,2.5) {};
\node[mycir, label=center:$1$](a2) at (3,-1.5) {};
\node[mycir, label=center:$-2$](b1) at (4.5,2.5) {};
\node[mycir, label=center:$n$](b2) at (4.5,-1.5) {};
\draw[thick] (u1)--(u2);
\draw[thick] (u2)--(u3);
\draw[thick] (a1)--(v2);
\draw[thick] (v2)--(3,0.15);
\draw[thick] (3,-0.15)--(a2);
\draw[thick] (u1)--++(-1,0);
\draw[thick] (u3)--++(1,0);
\draw[thick] (a1)--++(0,1);
\draw[thick] (a2)--++(0,-1);
\draw[thick, dotted] (v1)--(u1);
\draw[thick, dotted] (v3)--(u3);
\draw[thick, dotted] (a1)--(b1);
\draw[thick, dotted] (a2)--(b2);
\draw[thick, dotted] (u2)--(v2);
}
\caption{The configuration of Section~\ref{gendiffsng}.}\label{fig:221}
\end{figure}

\subsubsection{The exponents of the two distinguished radial orbits have different signs}\label{gendiffsng} We have, for some~$n\in\mathbb{Z}$, the diagram in Fig.~\ref{fig:221}. If~$n=-1$ the vector field is degenerate (for~$n+1=0$) and if~$n=1$ it corresponds to one of the vector fields studied in Section~\ref{im1f}; we may thus suppose that~$n^2\neq 1$. Placing the horizontal line at the plane~$\{x=0\}$, the vertical at $\{y=0\}$, the radial orbit of exponents $(1,2)$ at $[0:1:0]$ and the one with exponents~$(-2,-1)$ at $[1:0:0]$, we find the vector field~$Y$ of equation~\ref{pseudobemol} for some~$\gamma\in\mathbb{C}$. Let
\begin{gather*}\gamma=\frac{8m^2n}{(mn+2n+2m)(mn-2n+2m)}.\end{gather*}
The other two radial orbits have exponents~$(-1,m)$ and~$(-1,-m)$; we may suppose that~$m^2\neq 1$ for otherwise we would be in the setting of Section~\ref{im1f}. These vector fields are of Halphen type with respect to
$C=\indel{x}-y/x\indel{y}$, for~$C$ is homogeneous of degree~$0$ and~$[C,Y]=2E$. For the adapted function~$\xi=(2\gamma/n) xy/z^2$,
\begin{gather*}-\frac{1}{\xi^2}\{\xi,t\}=\frac{1}{2}\frac{\big(1-\frac{1}{4}\big)(\xi-1)\xi-\big(1-\frac{1}{n^2}\big)(\xi-1) +\big(1-\frac{1}{m^2}\big)\xi}{\xi^2(\xi-1)^2}.\end{gather*}
Following the discussion in Section~\ref{sec:halphen}, equation~\ref{pseudobemol} is associated to the triangle group~$T(2,m,n)$. It is univalent when~$n,m\in\mathbb{Z}$, $|n|\geq 2$ and~$|m|\geq 2$ (recall that we have dismissed the vector fields where~$n^2=1$ or~$m^2=1$). These vector fields had been previously spotted in~\cite{guillot-fourier}, where it was shown that they are quotients of Halphen's systems~\ref{halpheneq} having two coincident parameters.\footnote{By setting
$m=(\alpha+2\beta-2)/\beta$, $n = -2(\alpha+2\beta-2)/\alpha$ ($\gamma = \alpha(\alpha+2\beta-2)/(2\beta-1)$) we find, in the coordinates
$(\zeta_1,\zeta_2,\zeta_3)=\big(x+\alpha^{-1}z, 2x, \frac{1}{2}x+(2[2\beta-1])^{-1}y\big)$,
the vector field~$\mathcal{H}_2(\alpha,\beta)$ in formula~(3.1) of~\cite{guillot-fourier}.}

By Lemma~\ref{notwopos}, there is no need to consider the case where the exponents of both distinguished radial orbits are positive.

\subsection[The plane associated to one distinguished radial orbit contains the other one (but the two planes are different)]{The plane associated to one distinguished radial orbit\\ contains the other one (but the two planes are different)}\label{connect}

\subsubsection{One radial orbit with positive exponents and one with negative ones}\label{oneposoneneg} Two out of the three eigenvalues of the restriction of the vector field to the invariant plane must be either~$(-1,2)$ or~$(1,-2)$. In the first case the restriction of the vector field to this plane cannot be, by the enumeration in~(\ref{tess}), univalent. In the second one we have the diagram in Fig.~\ref{fig:onepos1}.
Since the plane tangent to the exponent~$-1$ in the radial orbit with exponents~$(-2,-1)$ is invariant, the above diagram extends into a diagram of the form portrayed in Fig.~\ref{fig:211-2} for some~$n\in\mathbb{Z}$. We may assume that~$n\geq 2$ since~$n-1$ and~$-n$ play symmetric roles. If~$n=2$, $(1,n-1)=(1,1)$, and we would be back to Section~\ref{im1f}; if~$n=3$, $(1,n-1)=(1,2)$ and we would be in the setting of Section~\ref{2fnc}. We will thus suppose that~$n>3$. Relation~$\mathrm{R}_{0}$ reads
\begin{figure}
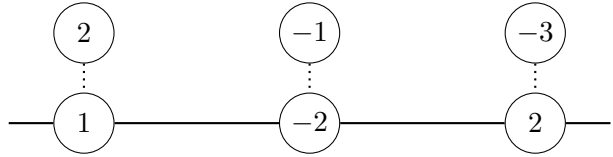
\centering
\oneline{$1$}{$-2$}{$2$}{$2$}{$-1$}{$-3$}
\caption{The invariant line for Section~\ref{oneposoneneg}.}\label{fig:onepos1}
\end{figure}
\begin{figure}
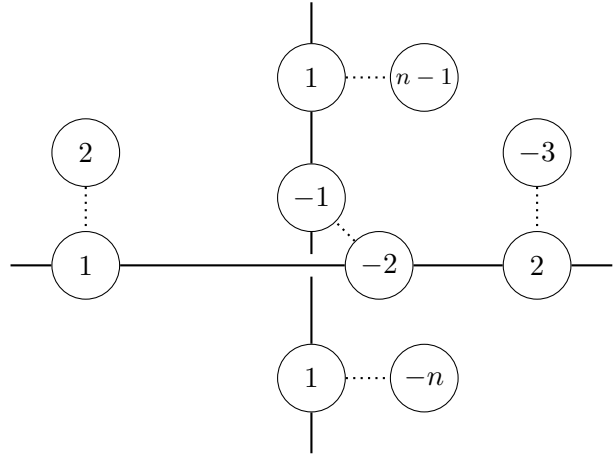
\centering
\tikz{
\tikzstyle{mycir}=[circle,minimum size=0.9cm,draw]
\node[mycir, label=center:$1$](u1) at (0,0) {};
\node[mycir, label=center:$-2$](u2) at (3.9,0) {};
\node[mycir, label=center:$2$](u3) at (6,0) {};
\node[mycir, label=center:$2$](v1) at (0,1.5) {};
\node[mycir, label=center:$-1$](v2) at (3,0.9) {};
\node[mycir, label=center:$-3$](v3) at (6,1.5) {};
\node[mycir, label=center:$1$](a1) at (3,2.5) {};
\node[mycir, label=center:$1$](a2) at (3,-1.5) {};
\node[mycir, label=center:\footnotesize{$n-1$}](b1) at (4.5,2.5) {};
\node[mycir, label=center:$-n$](b2) at (4.5,-1.5) {};
\draw[thick] (u1)--(u2);
\draw[thick] (u2)--(u3);
\draw[thick] (a1)--(v2);
\draw[thick] (v2)--(3,0.15);
\draw[thick] (3,-0.15)--(a2);
\draw[thick] (u1)--++(-1,0);
\draw[thick] (u3)--++(1,0);
\draw[thick] (a1)--++(0,1);
\draw[thick] (a2)--++(0,-1);
\draw[thick, dotted] (v1)--(u1);
\draw[thick, dotted] (v3)--(u3);
\draw[thick, dotted] (a1)--(b1);
\draw[thick, dotted] (a2)--(b2);
\draw[thick, dotted] (u2)--(v2);
}
\caption{The configuration in Section~\ref{oneposoneneg}.}\label{fig:211-2}
\end{figure}
\begin{gather}\label{3cintri}
\frac{1}{\xi_6}+\frac{1}{\xi_7}+\frac{1}{n(n-1)}=\frac{1}{6}.
\end{gather}
Several cases arise when considering the solutions to this equation.

\paragraph{First case, $\boldsymbol{\xi_6=6}$, $\boldsymbol{\xi_7=-n(n-1)}$.} We have the three equations~$\mathrm{R}_i$ for the three unknowns~$v_6$, $v_7$, $n$. From the solutions to these,
\begin{enumerate}\itemsep=0pt
\item for~$(u_6,v_6)=(6,1)$, $u_7=\frac{1}{2}\big({-}7+\sqrt{97}\big)$ and~$v_7=\frac{1}{2}\big({-}7-\sqrt{97}\big)$, which are not integers;
\item for~$(u_6,v_6)=(3,2)$, $n=2$ (but we had previously supposed that~$n> 3$);
\item for~$(u_6,v_6)=(-2,-3)$, $n=3/2$ or~$n=-1/2$ (which is not an integer);
\item for~$(u_6,v_6)=(-1,-6)$, $n=5/3$ or~$n=-2/3$.
\end{enumerate}
We can thus discard all these cases.

\paragraph{Second case, $\xi_6\neq 6$, $\xi_7\neq 6$.} Since~$n>3$, there are only finitely many solutions to~(\ref{3cintri}). We have used the computer to find them.\footnote{See the ancillary files \texttt{p1a.sage} and \texttt{p1b.sage}.} Let us list these solutions, in order to illustrate the nature of the problems that we are dealing with. The solutions are: for~$n=4$, $(3,-4)$, $(4,-6)$ $(8, -24)$, $(9,-36)$, $(10,-60)$, $(11,-132)$, $(13,156)$, $(14,84)$, $(15,60)$, $(16,48)$, $(18,36)$, $(20,30)$, $(21,28)$, $(24,24)$; for~$n=5$, $(5,-12) $, $(8,-120)$, $(9,180)$, $(10,60)$, $(12,30)$, $(15,20)$; for~$n=6$, $(3,-5)$, $(5,-15)$, $(7,-105)$, $(8,120)$, $(9,45)$, $(10,30)$, $(12,20)$, $(15,15)$; for~$n=7$, $(8,56 )$, $(14,14)$; for~$n=8$, $(7,168)$, $(8,42)$; for~$n=9$, $(8,36)$, $(9,24)$; for~$n=10$, $(10,18)$; for~$n=11$, $(11,15)$; for~$n=12$, $(4,-11)$; for~$n=13$, $(12,13)$; for~$n=16$, $(10,16)$; for~$n=19$, $(9,19)$; for~$n=21$,
$(5,-28)$; for~$n=25$, $(8,25)$ ; for~$n=30$, $(5,-29)$; for~$n=43$, $(7,43)$. In particular, there is no solution for~$n>43$.

Given~$n$, for each one of these solutions~$(\xi_6,\xi_7)$ we have finitely many choices for~$u_6$, $v_6$, $u_7$ and~$v_7$, which can be again obtained with the help of the computer. With these values we would have all the potential values of~$u_i$ and~$v_i$ for all~$i$, and we can check if they satisfy the equations~$\mathrm{R}_i$. We have done this and have found that in there is no set of~$u_i$'s and~$v_i$'s that is compatible with these relations in the above cases.

\subsubsection{Positive exponents for the two radial orbits}\label{sec:twopos}

These are the vector fields having a diagram of the form portrayed in Fig.~\ref{fig:twopos}. By Lemma~\ref{sing12}, another invariant plane passes through one of the radial orbit having exponents~$(2,1)$, and we have a diagram like the one in Fig.~\ref{dia214-2} for~$n,m\in\mathbb{Z}$. We may suppose that~$m>0$. Let~$k=n(m+n)$. Equation~$\mathrm{R}_0$ reads
\begin{figure}
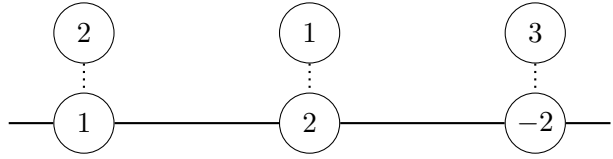
\centering
 \oneline{$1$}{$2$}{$-2$}{$2$}{$1$}{$3$}
\caption{The invariant line in Section~\ref{sec:twopos}.}\label{fig:twopos}
\end{figure}
\begin{figure}
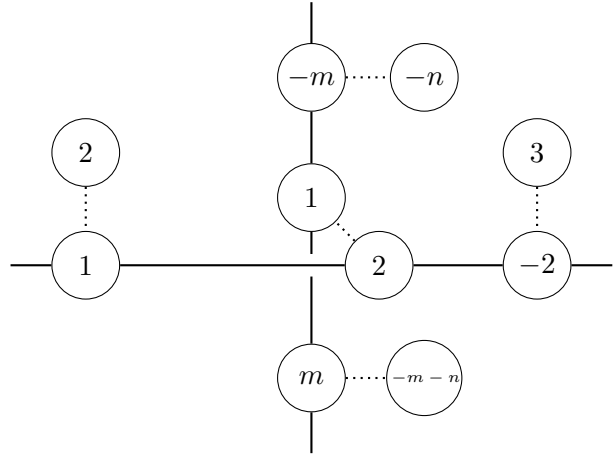
\centering
\tikz{
\tikzstyle{mycir}=[circle,minimum size=0.9cm,draw]
\tikzstyle{mycir2}=[circle,minimum size=1.0cm,draw]
\node[mycir, label=center:$1$](u1) at (0,0) {};
\node[mycir, label=center:$2$](u2) at (3.9,0) {};
\node[mycir, label=center:$-2$](u3) at (6,0) {};
\node[mycir, label=center:$2$](v1) at (0,1.5) {};
\node[mycir, label=center:$1$](v2) at (3,0.9) {};
\node[mycir, label=center:$3$](v3) at (6,1.5) {};
\node[mycir, label=center:$-m$](a1) at (3,2.5) {};
\node[mycir, label=center:$m$](a2) at (3,-1.5) {};
\node[mycir, label=center:$-n$](b1) at (4.5,2.5) {};
\node[mycir2, label=center:\tiny{\hspace{0.5ex}$-m-n$}](b2) at (4.5,-1.5) {};
\draw[thick] (u1)--(u2);
\draw[thick] (u2)--(u3);
\draw[thick] (a1)--(v2);
\draw[thick] (v2)--(3,0.15);
\draw[thick] (3,-0.15)--(a2);
\draw[thick] (u1)--++(-1,0);
\draw[thick] (u3)--++(1,0);
\draw[thick] (a1)--++(0,1);
\draw[thick] (a2)--++(0,-1);
\draw[thick, dotted] (v1)--(u1);
\draw[thick, dotted] (v3)--(u3);
\draw[thick, dotted] (a1)--(b1);
\draw[thick, dotted] (a2)--(b2);
\draw[thick, dotted] (u2)--(v2);
}
\caption{The master configuration of Section~\ref{sec:twopos}.} \label{dia214-2}
\end{figure}
\begin{gather}\label{anogtheregyp}\frac{1}{\xi_6}+\frac{1}{\xi_7}+\frac{1}{k}=\frac{1}{6}.\end{gather}
We have infinitely many solutions for it if one of the involved integers is~$6$, finitely many otherwise. The most general vector field having these data is
\begin{gather}\label{twonegative}x(x -y)\del{x}+y(y+[n+1]x-z)\del{y}+\big(z^2+\alpha xy +[m+1]xz\big)\del{z}\end{gather}
for some~$\alpha\in\mathbb{C}$. If we set~$\alpha=-\big(6\sigma^2+ 2m\sigma + 5n\sigma + mn + n^2\big)/\sigma$, $\sigma\in\mathbb{C}$, there is a radial orbit~$\rho_6$ at~$[1:\sigma:2\sigma+n]$ and another one, $\rho_7$, at~$[6\sigma:n(n+m):2n(n+m+3\sigma)]$. They are exchanged by the involution~$\sigma\mapsto \frac{1}{6}n(n+m)/\sigma$, so they are equivalent for most purposes. The exponents of~$\rho_6$ are the roots of
\begin{gather}\label{carac6}\zeta^2-\left(7+\frac{7+m+2n}{\sigma-1}\right)\zeta+\frac{6\sigma^2-n^2-mn}{(\sigma-1)^2},\end{gather}
while those of~$\rho_7$ are the roots of
\begin{gather}\label{carac6-bis}\zeta^2-\left(7+\frac{6\sigma[7+m+2n]}{n^2+mn-6\sigma}\right)\zeta +\frac{6n(n+m)\big(n^2+mn-6\sigma^2\big)}{\big(n^2+mn-6\sigma\big)^2}.\end{gather}
Notice that the coefficient of the linear term is~$-7$ in the first if and only if it is~$-7$ in the second ($\sigma\in\{0,\infty\}$ corresponds to degenerate vector fields) and that this happens if and only if~$7+m+2n=0$. We have three subcases according to the nature of the solutions of~(\ref{anogtheregyp}).

\paragraph{First subcase: in~(\ref{anogtheregyp}), $\boldsymbol{k=6}$.} Since~$n$ divides~$6$, the values that~$n$ may take are only finitely many; in turn, the value of~$n$ determines that of~$m$, for~$m=k/n-n$. Among the divisors of~$6$, if~$n\in\{6,3,-1,-2\}$, $m<0$. We have the following remaining cases.
\subparagraph{When~$\boldsymbol{(n,m)=(2,1)}$.} Since there is a radial orbit with exponents~$(-n,-m)$ $=(-2,-1)$, we are in the setting of Section~\ref{gendiffsng}.
\subparagraph{When~$\boldsymbol{(n,m)\!=\!({-}3,1)}$.} Since there is a radial orbit with exponents~$(m,{-}m-n)=(1,2)$, we are in the setting of Section~\ref{2neg2lines}.
\subparagraph{When~$\boldsymbol{(n,m)=(1,5)}$.} For the partial sum of~$\mathrm{R}_1$ we have~$\sum\limits_{i=1}^5 (1/u_i+1/v_i)=5/3$ and thus~$1/u_6+1/v_6+1/u_7+1/v_7=7/3$. If the number~$1$ appeared only once among $u_6$, $v_6$, $u_7$, $v_7$, say, $u_6=1$, we would have~$1/v_6+1/u_7+1/v_7=4/3$. Up to ordering, the only way to express~$4/3$ as a sum of inverses of integers without the summand~$1$ is~$4/3=1/2+1/2+1/3$ and we would either have a radial orbit with exponents $(1,2)$, going back to Section~\ref{2neg2lines}, or one with exponents~$(2,2)$. In this last case, through this radial orbit with exponents~$(2,2)$ we would have three invariant planes~\cite[Lemma~3.6]{guillot-fourier}. One of them, $P$, intersects the invariant plane associated with the vertical line in the diagram of Fig.~\ref{dia214-2} at the radial orbit with exponents~$(-m,-n)$. Two of the three exponents of the restriction to~$P$ would be~$2$ and~$-n=-1$, but this is impossible by the enumeration in~(\ref{tess}). Hence, the number~$1$ appears at least twice as an exponent among~$u_6$, $v_6$, $u_7$, $v_7$ (but not within the same radial orbit since that would take us back to Section~\ref{im1f}), say~$u_6=1$ and~$u_7=1$. From~$\mathrm{R}_0$, $v_6=-v_7$, but from~$\mathrm{R}_1$, $1/v_6+1/v_7=1/3$, which is impossible. (There is no vector field having these data.)
\subparagraph{When~$\boldsymbol{(n,m)=(-6,5)}$.} From~$\mathrm{R}_{0}$, $\xi_7=-\xi_6$. Since~$m+2n=-7$, from~(\ref{carac6}), $u_6$ and~$v_6$ are the roots of~$x^2-7x+d_6$ (whose discriminant is~$49-4d_6^2$), while~$u_7$ and~$v_7$ are the roots of~$x^2-7x-d_6$ (whose discriminant is~$49+4d_6^2$). Both discriminants must be squares of integers. Their sum is~$98$, but~$98$ cannot be written as the sum of two different squares of integers.

\paragraph{Second subcase: in~(\ref{anogtheregyp}), $\boldsymbol{\xi_6=6}$, $\boldsymbol{\xi_7=-k}$.} Given~$u_6$ and~$v_6$ (integers such that~$u_6v_6=6$), we have four unknowns ($m,n,u_7,v_7$) and the three equations~$\mathrm{R}_i$, which we can actually solve. The solutions, factoring out the symmetries of the problem, are the following.

\subparagraph{When~$\boldsymbol{(u_6,v_6)=(-2,-3)}$.}
We have~$n=-6$, $(u_7,v_7)=(6,q)$, $m=q+6$, $\sigma=\frac{1}{2}(1-q)$. From the vector field~(\ref{twonegative}), after rescaling~$y$ by~$t\mapsto(1-q)y$, we find the family~\ref{t3}. The image of the vector field under
\begin{gather*}(x,y,z)\mapsto\big(x[x-2y],2x\big[x^2+(q+3)xy+yz\big]\big)=(u,v)\end{gather*}
is~$v\indel{u}+6u^2\indel{v}$, which has the first integral~$g_3=4u^3-v^2$ (it induces a homogeneous first integral of degree~$6$ for the original vector field). The reduced equation has the solution~$(u,v)=(\wp,\wp')$,
for the Weierstrass elliptic function~$\wp$ such that~$(\wp')^2=4\wp^3-g_3$. Solving for~$y$ and~$z$ we find
\begin{gather*}y=\frac{x^2-\wp}{2x},\qquad z=\frac{\wp'+(q+3)x\wp-(5+q)x^3}{x^2-\wp}.\end{gather*}
Substituting in~(\ref{twonegative}) reduces the system of equations to~$x'=\frac{1}{2}(q+1)x^2+\frac{1}{2}(1-q)\wp$. If~$q=-1$, $x$ is the Weierstrass~$\zeta$ function, up to the addition of a constant. If~$q\neq -1$, for~$w=\frac{1}{2}(q+1)x$, $w'=w^2+\frac{1}{4}\big(1-q^2\big)\wp$: the vector field is semicomplete whenever~$q\in\mathbb{Z}\setminus6\mathbb{Z}$~(see~\cite{chazy,guillot-riccati}). These vector fields appeared in~\cite{these} as the family~$\mathcal{T}_3$.

\subparagraph{When~$\boldsymbol{(u_6,v_6)=(2,3)}$.}
We have~$n=-1$, $m=1-k$, $(u_7,v_7)=(1,-k)$, $\sigma=\frac{1}{12}(k+6)$. From the vector field~(\ref{twonegative}), upon rescaling the variables by~$(x,y,z)\mapsto(2x,[k+6]y,kz)$, we find the family of vector fields~$Y(k)$ of equation~\ref{t4}. (The data for~$k=-6$ is not realizable.) When~$k\neq 6$ these vector fields are of Halphen type: for
\begin{gather*}C=3\del{x}+\left(1-3\frac{y}{x}\right)\del{y}+\left(3-\frac{z}{y}\right)\del{z},\end{gather*}
we have that~$[C,Y]=(6-k)E$. They all have the common adapted function
\begin{gather*}\xi=\frac{x(x-6y)^3}{\big(x^2-9xy+9yz\big)^2},\end{gather*}
for which
\begin{gather*}-\frac{1}{\xi^2}\{\xi,t\}=\frac{1}{2}\frac{\big(1-\frac{1}{4}\big)(\xi-1)\xi-\big(1-\frac{1}{9}\big)(\xi-1)
+\big(1-\frac{1}{k^2}\big)\xi}{\xi^2(\xi-1)^2},\end{gather*}
corresponding, as described in Section~\ref{sec:halphen}, to the triangle group~$T(2,3,k)$. Thus, when~$k\neq 6$, the vector field is semicomplete whenever~$k\in\mathbb{Z}\setminus\{-1,0,1\}$. It is, after exchanging~$y$ and~$z$, the vector field~$k\mathcal{H}_{3a}(1/k)$ in~formula~(3.5a) of~\cite{guillot-fourier} and it is thus a quotient of the symmetric case of the Halphen system~\ref{halpheneq} when the three parameters coincide. These vector fields appeared previously in~\cite{these} as the family~$\mathcal{T}_4$.

When~$k=6$ the vector field commutes with~$C$. The projection onto the orbit space of~$C$
\begin{gather*}(x,y,z)\mapsto\big(4x[x-6y],16x\big[x^2-9xy+9yz\big]\big)=(u,v)\end{gather*} maps the vector field to
$v\indel{u}+6u^2\indel{v}$; the solution of this reduced equation is given by~$(u,v)=(\wp,\wp')$, for the Weierstrass elliptic function~$\wp$ such that~$(\wp')^2=4\wp^3-g_3$. Sol\-ving for~$y$ and~$z$ we obtain
\begin{gather*}y(t)=\frac{4x^2-\wp}{24x},\qquad z(t)=\frac{8x^3-6x\wp+\wp'}{6\big(4x^2-\wp\big)},\end{gather*}
and substituting in~(\ref{twonegative}), we have~$x'(t)=\frac{1}{2}\wp(t)$, and thus~$x(t)=-\frac{1}{2}\zeta(t)+c$, for the Weierstrass~$\zeta$ function. The vector field is also semicomplete for~$k=6$.

\subparagraph{When~$\boldsymbol{(u_6,v_6)=(-1,-6)}$.} The equations impose the condition $1/n+1/(m+n)=-7/6$, and thus either~($m+n=-6$ and~$n=-1$) or ($m+n=-1$ and~$n=-6$). In the first case we have~$m=-5$ (but we supposed~$m>0$); in the second, since~$k=6$, we are back to the first subcase.

\subparagraph{When~$\boldsymbol{(u_6,v_6)=(1,6)}$.} Since~$u_6+v_6=7$, from~(\ref{carac6}) and~(\ref{carac6-bis}), $u_7+v_7=7$ and~$m+2n=-7$ (we may set~$v_7=7-u_7$ and~$m=-7-2n$). The relation~$\mathrm{R}_0$ implies that~$n^2+7n-7u_7+u_7^2=0$ and thus~$u_7=\frac{7}{2}\pm\frac{1}{2}\sqrt{49-28n-4n^2}$, which is an integer only if~$n=0$ or~$n=-7$ (which implies~$-m-n=0$). These correspond to degenerate vector fields.

\paragraph{Third subcase: in~(\ref{anogtheregyp}), $\boldsymbol{\kappa\neq 6}$, $\boldsymbol{\xi_6\neq 6}$, $\boldsymbol{\xi_7\neq 6}$.} There are 140 unordered triples of integers that solve the equation~$1/n_1+1/n_2+1/n_3=6$ under the condition that~$n_i\neq 6$ for all~$i$. From the list of such triples we can deduce all the possible values of~$\xi_6$, $\xi_7$ and~$k$. From the values of the first two, we may deduce all potential values of~$(u_6,v_6)$ and~$(u_7,v_7)$, as well as all factorizations of~$\xi_6$ and~$\xi_7$, and, from the latter, the putative values of~$n$ (among the factors of~$k$) and~$m$ (under the assumption that~$m>0$). We thus get possible values for~$u_i$, $v_i$, which can then be tested for compliance with the relations~$\mathrm{R}_i$. We obtain in this way the following five solutions:\footnote{See the files \texttt{p2a.sage} and \texttt{p2b.sage}.}
\begin{enumerate}\itemsep=0pt
\item $k=10$, $n=-5$, $m=3$, $(u_6,v_6)=(-3,10)$, $(u_7,v_7)=(2,5)$,
\item $k=-30$, $n=-10$, $m=13$, $(u_6,v_6)=(2,5)$, $(u_7,v_7)=(2,5)$,
\item $k=10$, $n=-5$, $m=3$, $(u_6,v_6)=(-5,12)$, $(u_7,v_7)=(3,4)$,
\item $k=12$, $n=-4$, $m=1$, $(u_6,v_6)=(-5,12)$, $(u_7,v_7)=(2,5)$,
\item $k=-60$, $n=-12$, $m=17$, $(u_6,v_6)=(3,4)$, $(u_7,v_7)=(2,5)$.
\end{enumerate}
In all of them, $m+2n+7=0$. Substituting these values of~$(u_6,v_6)$ as roots of~(\ref{carac6}), we obtain a quadratic equation for~$\sigma$; after substituting one of the roots in~(\ref{twonegative}), we obtain, respectively, the equations~\ref{chazyalt2}, \ref{chazyalt4}, \ref{chazyalt1}, \ref{chazyalt3} and~\ref{chazyalt5}. The equation corresponding to the other value of~$\sigma$ is obtained by changing the determination of the square root (except in equation~\ref{chazyalt4}, where the two values of~$\sigma$ give linearly equivalent equations).

Let us integrate them by showing that each one of them has a polynomial first integral and that its restriction to a particular level set of it is birationally equivalent to the restriction of either the Chazy~IX or the Chazy~X equation to a level set of the corresponding first integral.

\subparagraph{Equation~\ref{chazyalt2}.}
It is birationally equivalent to the Chazy~IX equation~(\ref{chazyix}). If~$\phi$ is a solution to the latter, a solution to this equation is given by
\begin{gather*}x=-\frac{3}{2}\big(\sqrt{5}-1\big)\phi,\qquad y=-\frac{1}{4}\big(\sqrt{5}-1\big)\frac{6\phi^2+\big(\sqrt{5}+1\big)\phi'}{\phi}, \\ z=\frac{1}{2}\big(\sqrt{5}-1\big)\frac{54\phi^3-\big(3+\sqrt{5}\big)\phi''+3\big(\sqrt{5}+1\big)\phi\phi'}{6\phi^2+\big(\sqrt{5}+1\big)\phi'}.\end{gather*}
Reciprocally, from a solution to~\ref{chazyalt2}, a solution to Chazy~IX is given by the birational inverse of the above,
\begin{gather*}\phi=-\frac{1}{6}\big(1+\sqrt{5}\big)x, \qquad \phi'=-\frac{1}{6}\big(1+\sqrt{5}\big)x(x-y), \\ \phi''=-\frac{1}{6}\big(1+\sqrt{5}\big)x\big(2x^2+xy+yz\big).\end{gather*}
This proves that the vector field is semicomplete.\footnote{In order to integrate equation~\ref{chazyalt2}, we started by looking (with the help of the computer) for homogeneous polynomial first integrals and homogeneous polynomial vector fields that commuted with the vector field we wished to integrate. Having established that the vector field had a homogeneous polynomial first integral of degree~$10$ and commuted with a homogeneous polynomial vector field of degree~$4$ having the same first integral, just like the Chazy~IX equation, it began to seem likely for both equations to be birationally equivalent (by some birational mapping that preserved the homogeneity). We then looked for linear functions of~$x$, $y$ and~$z$ such that, when evaluated at a solution to equation~\ref{chazyalt2}, solved the Chazy~IX equation, and obtained the present result. Equations~\ref{chazyalt4}--\ref{chazyalt5} were integrated in the same way (taking into account that Chazy~X has a polynomial first integral of degree~$12$ and commutes with a homogeneous polynomial vector field of degree~$6$ having the same first integral), considering the possibility that the homogeneous vector fields that commute with them could be rational, and not necessarily polynomial.} It has a homogeneous polynomial first integral of degree~$10$ and commutes with a homogeneous polynomial vector field of degree~$4$ having the same first integral.

\subparagraph{Equation~\ref{chazyalt4}.}
It is also birationally equivalent to Chazy~IX. If~$\phi$ is a~solution to the Chazy~IX equation, a solution to this equation is given by
\begin{gather*}x=-\phi,\qquad y=-\frac{\phi^2+\phi'}{\phi},\qquad z=\frac{8\phi^3+6\phi\phi'-
\phi''}{\phi^2+\phi'}.\end{gather*}
Reciprocally, from a solution~$(x,y,z)$ to this equation we may build a solution
\begin{gather*}\phi=-x, \qquad \phi'= x(y-x), \qquad \phi''=-x\big(2x^2+6yx+yz\big),\end{gather*}
to Chazy~IX (this is the birational inverse of the previous map). Equation~\ref{chazyalt4} has a homogeneous polynomial first integral of degree~$10$ and commutes with a homogeneous polynomial vector field of degree~$14$ (which, after dividing by the first integral, becomes a rational homogeneous vector field of degree~$4$ which commutes with the original one) having the same first integral.

\subparagraph{Equation~\ref{chazyalt1}.}
It is birationally equivalent to the Chazy~X equation~(\ref{chazyx}). If~$\phi$ is a solution to the latter, a solution to this equation is given by
\begin{gather*}x=\frac{1}{2}\big(3+\sqrt{3}\big)\phi,\qquad y=\frac{1}{6}\big(3+\sqrt{3}\big)\frac{\big({-}3+\sqrt{3}\big)\phi'+3\phi^2}{\phi}, \\ z=-\frac{1}{2}\big(3+\sqrt{3}\big)\frac{\big({-}4+2\sqrt{3}\big)\phi''+9\phi^3 +\big({-}3+\sqrt{3}\big)\phi\phi'}{\big({-}3+\sqrt{3}\big)\phi'+3\phi^2}.\end{gather*}
Reciprocally, from a solution to~\ref{chazyalt1} a solution to Chazy~X is given by
\begin{gather*}\phi=\frac{1}{3}\big(3-\sqrt{3}\big)x , \qquad \phi'=\frac{1}{3}\big(3-\sqrt{3}\big)x(x-y), \qquad \phi''=\frac{1}{3}\big(3-\sqrt{3}\big)x\big(2x^2+xy+yz\big).\end{gather*}
These two are birational inverses of one another. Equation~\ref{chazyalt1} has a homogeneous polynomial first integral of degree~$12$ and commutes with a homogeneous polynomial vector field of degree~$6$ having the same first integral.

\subparagraph{Equation~\ref{chazyalt3}.}
It is birationally equivalent to Chazy~X. If~$\phi$ is a solution to the Chazy~X equation, a solution to this equation is given by
\begin{gather*}x= -\phi,\qquad y=-\frac{\phi^2+\phi'}{\phi},\qquad z=\frac{2\phi^3-\phi''}{\phi^2+\phi'}.\end{gather*}
Reciprocally, from a solution to this equation, a solution to Chazy~X may be given by the birational inverse
\begin{gather*}\phi= -x,\qquad \phi'=x(y-x),\qquad \phi''=-x\big(2x^2+yz\big)\end{gather*}
of the above map. Equation~\ref{chazyalt3} has a homogeneous polynomial first integral of degree 12 and commutes with a homogeneous polynomial vector field of degree~$6$ having the same first integral.

\subparagraph{Equation~\ref{chazyalt5}.}
It is birationally equivalent to Chazy~X. If~$\phi$ is a solution to the Chazy~X equation, a solution to this equation is given by
\begin{gather*}x=-\frac{1}{11}\big(1+2\sqrt{3}\big)\phi,\qquad y=\frac{1}{11}\big(1+2\sqrt{3}\big)\frac{\big(1-2\sqrt{3}\big)\phi'-\phi^2}{\phi}, \\ z=\frac{1}{11}\big(1+2\sqrt{3}\big)\frac{\big(13-4\sqrt{3}\big)\phi''-10\phi^3+8\big(1-2\sqrt{3}\big)\phi\phi'}{\big(1-2\sqrt{3}\big)\phi'-\phi^2}.
\end{gather*}
Further, from a solution to this equation a solution to the Chazy~X equation may be given by
\begin{gather*}\phi=\big(1-2\sqrt{3}\big)x, \qquad \phi'= \big(1-2\sqrt{3}\big)x(x-y), \qquad \phi''=\big(1-2\sqrt{3}\big)x\big(2x^2+8xy+yz\big),\end{gather*}
which is the birational inverse of the previous map. It has a homogeneous polynomial first integral of degree 12 and commutes with a homogeneous polynomial vector field of degree~$18$ (which becomes rational of degree~$6$ after dividing by the first integral) having the same first integral.

By Lemma~\ref{notwopos}, there is no need to consider the case where the exponents of both radial orbits are positive.

\subsection{Same invariant plane}\label{sec:sameplane} We have one of the diagrams, (A) or~(B), appearing in Fig.~\ref{fig:sameplane}. In both cases, for~$\mathrm{R}_0$ to hold one must have
\begin{figure}
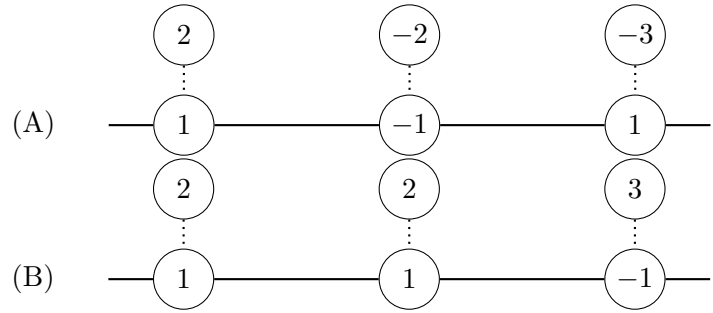
\centering
\onelinwlab{$1$}{$-1$}{$1$}{$2$}{$-2$}{$-3$}{(A)}

\onelinwlab{$1$}{$1$}{$-1$}{$2$}{$2$}{$3$}{(B)}

\caption{The two possible configurations for Section~\ref{sec:sameplane}.} \label{fig:sameplane}
\end{figure}
\begin{gather}\label{form:sil}\frac{1}{\xi_4}+\frac{1}{\xi_5}+\frac{1}{\xi_6}+\frac{1}{\xi_7}=\frac{1}{3}.\end{gather}
As before, special situations arise whenever the equality is already realized by a proper subset of the summands.

\subsubsection{In~(\ref{form:sil}), $\boldsymbol{\xi_4=3}$} This is the case where, in equation~(\ref{form:sil}), we have~$\xi_4=3$. We either have~$(u_4,v_4)=(1,3)$ or~$(u_4,v_4)=(-1,-3)$. In both cases, the exponents are resonant, and there is a further obstruction on the vector fields for semicompleteness. This obstruction can be easily read in the foliation induced on~${\mathbb{CP}}^2$ at the corresponding point~$p$: the singularity of the foliation, which is a resonant node, must be linearizable. This is equivalent to having, in the induced foliation, an invariant curve tangent to the exponent~$\pm 1$. One way to guarantee the existence of this curve is to impose the invariance of the plane that passes through~$p$ tangent to the exponent~$\pm 1$. This will be the assumption in Sections~\ref{slimmw} through~\ref{lsppmix}. The invariance of this plane is not the only way to overcome the linearizability obstruction, as we will show in Section~\ref{sliimmo}. The results there obtained will be used in Sections~\ref{slimmo} through~\ref{sliippo} to produce the classification in these cases.

\paragraph{Case (A), $\boldsymbol{(u_4,v_4)=(1,3)}$, with invariant plane.}\label{slimmw}
If the plane tangent to the exponent~$1$ is invariant, we have a diagram of one of the forms portrayed in Figs.~\ref{doslin01}, \ref{doslin02} or~\ref{doslin03}. For the first two, for the partial sum of~$\mathrm{R_0}$ we have that~$\sum\limits_{i=1}^5 1/\xi_i=9/10\neq 1$ and for last one~$\sum\limits_{i=1}^5 1/\xi_i=14/15\neq 1$. For each one of these cases we can (with the help of the computer) find all the possible (finitely many) nonzero integers~$\xi_6$ and~$\xi_7$ that satisfy, together with the data already obtained, equation~$\mathrm{R}_0$. For each value of~$\xi_6$ and~$\xi_7$ we can find all possible values of~$(u_6,v_6)$ and~$(v_7,u_7)$ and, with these, test for equations~$\mathrm{R}_1$ and~$\mathrm{R}_2$ (in order to find all seven couples of integers containing the above set of five pairs and satisfying the three relations~$\mathrm{R}_i$). We did not find any solutions in any of these three cases.\footnote{See the file \texttt{p3.sage}.}

\begin{figure}
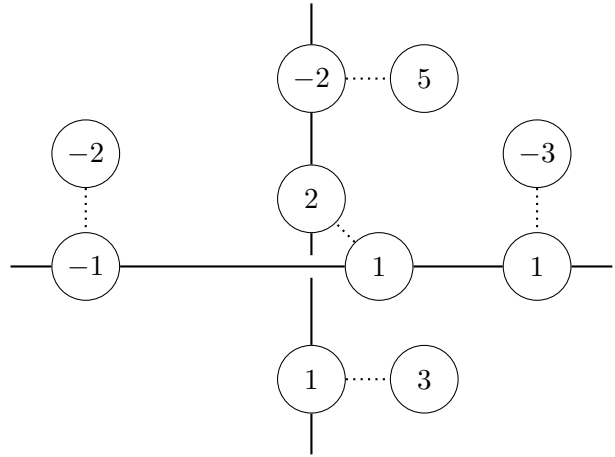
\centering
\tikz{
\tikzstyle{mycir}=[circle,minimum size=0.9cm,draw]
\node[mycir, label=center:$-1$](u1) at (0,0) {};
\node[mycir, label=center:$1$](u2) at (3.9,0) {};
\node[mycir, label=center:$1$](u3) at (6,0) {};
\node[mycir, label=center:$-2$](v1) at (0,1.5) {};
\node[mycir, label=center:$2$](v2) at (3,0.9) {};
\node[mycir, label=center:$-3$](v3) at (6,1.5) {};
\node[mycir, label=center:$-2$](a1) at (3,2.5) {};
\node[mycir, label=center:$1$](a2) at (3,-1.5) {};
\node[mycir, label=center:$5$](b1) at (4.5,2.5) {};
\node[mycir, label=center:$3$](b2) at (4.5,-1.5) {};
\draw[thick] (u1)--(u2);
\draw[thick] (u2)--(u3);
\draw[thick] (a1)--(v2);
\draw[thick] (v2)--(3,0.15);
\draw[thick] (3,-0.15)--(a2);
\draw[thick] (u1)--++(-1,0);
\draw[thick] (u3)--++(1,0);
\draw[thick] (a1)--++(0,1);
\draw[thick] (a2)--++(0,-1);
\draw[thick, dotted] (v1)--(u1);
\draw[thick, dotted] (v3)--(u3);
\draw[thick, dotted] (a1)--(b1);
\draw[thick, dotted] (a2)--(b2);
\draw[thick, dotted] (u2)--(v2);
}
\caption{The first configuration of Section~\ref{slimmw}.}\label{doslin01}
\end{figure}

\begin{figure}
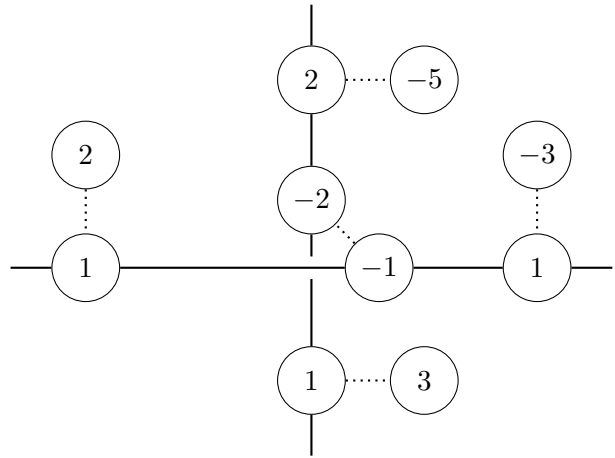
\centering
\tikz{
\tikzstyle{mycir}=[circle,minimum size=0.9cm,draw]
\node[mycir, label=center:$1$](u1) at (0,0) {};
\node[mycir, label=center:$-1$](u2) at (3.9,0) {};
\node[mycir, label=center:$1$](u3) at (6,0) {};
\node[mycir, label=center:$2$](v1) at (0,1.5) {};
\node[mycir, label=center:$-2$](v2) at (3,0.9) {};
\node[mycir, label=center:$-3$](v3) at (6,1.5) {};
\node[mycir, label=center:$2$](a1) at (3,2.5) {};
\node[mycir, label=center:$1$](a2) at (3,-1.5) {};
\node[mycir, label=center:$-5$](b1) at (4.5,2.5) {};
\node[mycir, label=center:$3$](b2) at (4.5,-1.5) {};
\draw[thick] (u1)--(u2);
\draw[thick] (u2)--(u3);
\draw[thick] (a1)--(v2);
\draw[thick] (v2)--(3,0.15);
\draw[thick] (3,-0.15)--(a2);
\draw[thick] (u1)--++(-1,0);
\draw[thick] (u3)--++(1,0);
\draw[thick] (a1)--++(0,1);
\draw[thick] (a2)--++(0,-1);
\draw[thick, dotted] (v1)--(u1);
\draw[thick, dotted] (v3)--(u3);
\draw[thick, dotted] (a1)--(b1);
\draw[thick, dotted] (a2)--(b2);
\draw[thick, dotted] (u2)--(v2);
}
 \caption{The second configuration of Section~\ref{slimmw}.}\label{doslin02}
\end{figure}

\begin{figure}
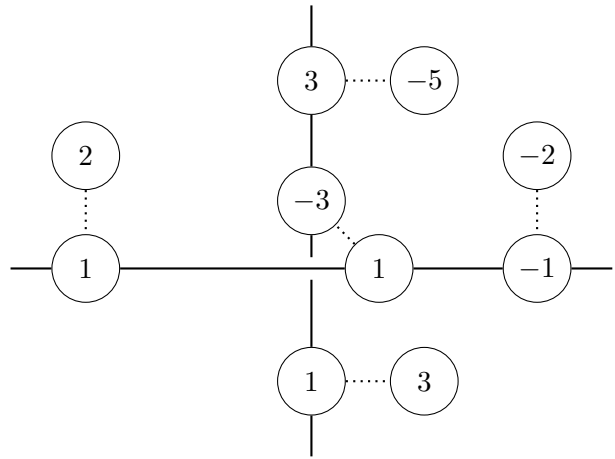
\centering
\tikz{
\tikzstyle{mycir}=[circle,minimum size=0.9cm,draw]
\node[mycir, label=center:$1$](u1) at (0,0) {};
\node[mycir, label=center:$1$](u2) at (3.9,0) {};
\node[mycir, label=center:$-1$](u3) at (6,0) {};
\node[mycir, label=center:$2$](v1) at (0,1.5) {};
\node[mycir, label=center:$-3$](v2) at (3,0.9) {};
\node[mycir, label=center:$-2$](v3) at (6,1.5) {};
\node[mycir, label=center:$3$](a1) at (3,2.5) {};
\node[mycir, label=center:$1$](a2) at (3,-1.5) {};
\node[mycir, label=center:$-5$](b1) at (4.5,2.5) {};
\node[mycir, label=center:$3$](b2) at (4.5,-1.5) {};
\draw[thick] (u1)--(u2);
\draw[thick] (u2)--(u3);
\draw[thick] (a1)--(v2);
\draw[thick] (v2)--(3,0.15);
\draw[thick] (3,-0.15)--(a2);
\draw[thick] (u1)--++(-1,0);
\draw[thick] (u3)--++(1,0);
\draw[thick] (a1)--++(0,1);
\draw[thick] (a2)--++(0,-1);
\draw[thick, dotted] (v1)--(u1);
\draw[thick, dotted] (v3)--(u3);
\draw[thick, dotted] (a1)--(b1);
\draw[thick, dotted] (a2)--(b2);
\draw[thick, dotted] (u2)--(v2);
}
\caption{The third configuration of Section~\ref{slimmw}.}\label{doslin03}
\end{figure}

\paragraph{Case (B), $\boldsymbol{(u_4,v_4)=(1,3)}$, with invariant plane.}\label{b13}

We have a diagram of one of the forms in Figs.~\ref{doslin04} or~\ref{doslin05}. For the first, $\sum\limits_{i=1}^5 1/\xi_i=9/10\neq 1$ and for the second one, $\sum\limits_{i=1}^5 1/\xi_i=14/15\neq 1$. By performing the test just described, we find no solutions in the first case. In the second case, we find that the solution can be completed by either~($(u_6,v_6)=(2,5)$, $(u_7,v_7)=(-3,10)$) or ($(u_6,v_6)=(3,4)$, $(u_7,v_7)=(-5,12)$)~-- which will give, respectively, equations~\ref{chazyixbis} and~\ref{chazyxbis}~-- and that these are the only solutions.\footnote{For the analysis of both cases see the file \texttt{p3.sage}.}

\begin{figure}
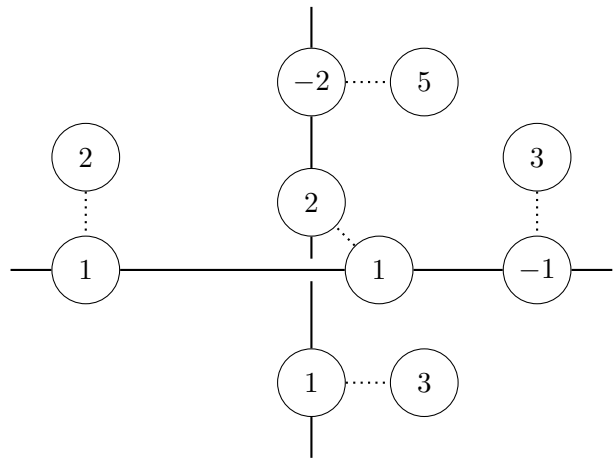
\centering
\tikz{
\tikzstyle{mycir}=[circle,minimum size=0.9cm,draw]
\node[mycir, label=center:$1$](u1) at (0,0) {};
\node[mycir, label=center:$1$](u2) at (3.9,0) {};
\node[mycir, label=center:$-1$](u3) at (6,0) {};
\node[mycir, label=center:$2$](v1) at (0,1.5) {};
\node[mycir, label=center:$2$](v2) at (3,0.9) {};
\node[mycir, label=center:$3$](v3) at (6,1.5) {};
\node[mycir, label=center:$-2$](a1) at (3,2.5) {};
\node[mycir, label=center:$1$](a2) at (3,-1.5) {};
\node[mycir, label=center:$5$](b1) at (4.5,2.5) {};
\node[mycir, label=center:$3$](b2) at (4.5,-1.5) {};
\draw[thick] (u1)--(u2);
\draw[thick] (u2)--(u3);
\draw[thick] (a1)--(v2);
\draw[thick] (v2)--(3,0.15);
\draw[thick] (3,-0.15)--(a2);
\draw[thick] (u1)--++(-1,0);
\draw[thick] (u3)--++(1,0);
\draw[thick] (a1)--++(0,1);
\draw[thick] (a2)--++(0,-1);
\draw[thick, dotted] (v1)--(u1);
\draw[thick, dotted] (v3)--(u3);
\draw[thick, dotted] (a1)--(b1);
\draw[thick, dotted] (a2)--(b2);
\draw[thick, dotted] (u2)--(v2);
}
\caption{One of the configurations of Section~\ref{b13}.}\label{doslin04}
\end{figure}

\begin{figure}
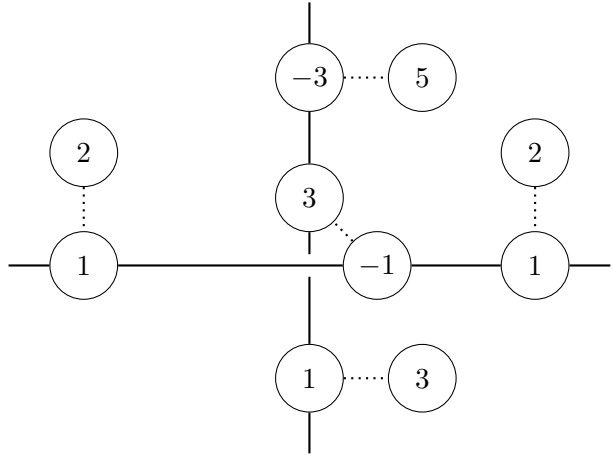
\centering
\tikz{
\tikzstyle{mycir}=[circle,minimum size=0.9cm,draw]
\node[mycir, label=center:$1$](u1) at (0,0) {};
\node[mycir, label=center:$-1$](u2) at (3.9,0) {};
\node[mycir, label=center:$1$](u3) at (6,0) {};
\node[mycir, label=center:$2$](v1) at (0,1.5) {};
\node[mycir, label=center:$3$](v2) at (3,0.9) {};
\node[mycir, label=center:$2$](v3) at (6,1.5) {};
\node[mycir, label=center:$-3$](a1) at (3,2.5) {};
\node[mycir, label=center:$1$](a2) at (3,-1.5) {};
\node[mycir, label=center:$5$](b1) at (4.5,2.5) {};
\node[mycir, label=center:$3$](b2) at (4.5,-1.5) {};
\draw[thick] (u1)--(u2);
\draw[thick] (u2)--(u3);
\draw[thick] (a1)--(v2);
\draw[thick] (v2)--(3,0.15);
\draw[thick] (3,-0.15)--(a2);
\draw[thick] (u1)--++(-1,0);
\draw[thick] (u3)--++(1,0);
\draw[thick] (a1)--++(0,1);
\draw[thick] (a2)--++(0,-1);
\draw[thick, dotted] (v1)--(u1);
\draw[thick, dotted] (v3)--(u3);
\draw[thick, dotted] (a1)--(b1);
\draw[thick, dotted] (a2)--(b2);
\draw[thick, dotted] (u2)--(v2);
}
\caption{The configuration of Section~\ref{b13} leading to equations~\ref{chazyixbis} and~\ref{chazyxbis}.}\label{doslin05}
\end{figure}

The vector fields
\begin{gather*}\big(x^2+[1-a]yz\big)\del{x}+y(2x+y-2z)\del{y}+z(z-2x-y)\del{z},\end{gather*}
for~$a\in\mathbb{C}$, are the most general ones representing the configuration of the diagram in Fig.~\ref{doslin05}. Two values of~$a$ differing by a sign give the same vector field: the involution~$(x,y,z)\mapsto (x+y,-y,z)$ preserves the family and acts upon~$a$ by~$a\mapsto -a$. By setting~$a=-\big(\sigma^2+15\big)/(8\sigma)$ we find a~radial orbit at~$[s+3:-2s:4]$ whose exponents are the roots of~$\zeta^2-7\zeta+15-\sigma^2$.

\subparagraph{Equation~\ref{chazyixbis}.}
The case~$(u_6,v_6)=(2,5)$, $(u_7,v_7)=(-3,10)$ can be realized by~$a=\frac{1}{2}\sqrt{5}$; in this way we find equation~\ref{chazyixbis}. It has a homogeneous polynomial first integral of degree 10 and commutes with a homogeneous polynomial vector field of degree~$4$ having the same first integral (like Chazy~IX). When restricted to a generic level surface of the first integral, the vector field is birationally equivalent to the restriction of Chazy~IX to a generic level surface of its first integral: if~$\phi$ is a solution to the Chazy~IX equation, a solution to equation~\ref{chazyixbis} is given by
\begin{gather*}x=\frac{3}{2}\big(1-\sqrt{5}\big)\phi, \qquad y=-\frac{3\big(1+\sqrt{5}\big)(6\phi^2+\big[1+\sqrt{5}\big]\phi')^2}{2\big(\big[1+\sqrt{5}\big]\phi''+12\phi\phi'\big)},\\
z=-\frac{\big(1+\sqrt{5}\big)\phi''+12\phi\phi'}{\big(1+\sqrt{5}\big)\phi'+6\phi^2};\end{gather*} reciprocally, from a solution~$(x,y,z)$ of equation~\ref{chazyixbis}, we have the solution of Chazy~IX
\begin{gather*}\phi=-\frac{1}{6}\big(1+\sqrt{5}\big)x,\qquad \phi'=-\frac{1}{12}\big(1+\sqrt{5}\big)\big(2x^2+\big[2-\sqrt{5}\big]zy\big), \\ \phi''=-\frac{1}{12}\big(1+\sqrt{5}\big)\big(4x^3+\big[4-2\sqrt{5}\big]xyz-\big[2-\sqrt{5}\big]z^2y\big) \end{gather*}
(these two transformations are birational inverses of one another).

\subparagraph{Equation~\ref{chazyxbis}.}
The case~$(u_6,v_6)=(3,4)$, $(u_7,v_7)=(-5,12)$ can be realized by~$a=-\frac{3}{4}\sqrt{3}$, obtaining equation~\ref{chazyxbis}. It has a homogeneous polynomial first integral of degree 12 and commutes with a homogeneous sextic polynomial vector field having the same first integral (like Chazy~X). When restricted to a generic level surface of the first integral, it is birationally equivalent to the restriction of Chazy~X to a generic level surface: for a solution~$\phi$ of Chazy~X, a solution to equation~\ref{chazyxbis} is given by
\begin{gather*}x=\frac{1}{2}\big(3+\sqrt{3}\big)\phi, \qquad y=\frac{2}{11}\big(1+2\sqrt{3}\big)\frac{\big(3\phi^2-\big[3-\sqrt{3}\big]\phi'\big)^2}{6\phi\phi'-\big(3-\sqrt{3}\big)\phi''}, \\ z=\frac{\big(3-\sqrt{3}\big)\phi''-6\phi\phi'}{3\phi^2-\big(3-\sqrt{3}\big)\phi'};\end{gather*}
reciprocally, for a solution~$(x,y,z)$ to~\ref{chazyxbis}, a solution to Chazy~X is given by
\begin{gather*}\phi=\frac{1}{3}\big(3-\sqrt{3}\big)x, \qquad \phi'=\frac{1}{12}\big(3-\sqrt{3}\big)\big(4x^2+\big[4+3\sqrt{3}\big]zy\big), \\ \phi''=\frac{1}{12}\big(3-\sqrt{3}\big)\big(8x^3+2\big[4+3\sqrt{3}\big]xzy-\big[4+3\sqrt{3}\big]z^2y\big)\end{gather*}
(these two transformations are birational inverses of one another).

\paragraph{Cases (A) \& (B), $\boldsymbol{(u_4,v_4)=(-1,-3)}$, with invariant plane.}\label{lsppmix} From the enumeration in~(\ref{tess}), the exponents of the only two-dimensional nondegenerate quadratic homogeneous vector field with one radial direction with exponent~$-1$ are~$(-1,1,1)$. The exponent~$-1$ belongs to the fourth radial orbit and the exponents~$1$ to the other radial orbits in the invariant plane. One of these must correspond to the exponent of a radial orbit in the original plane, but all the exponents~$1$ are tangent to the original invariant plane. These data cannot be realized.

\paragraph{Case (B), $\boldsymbol{(u_4,v_4)=(-1,-3)}$ without invariant plane.}\label{sliimmo}

Let us construct the most general vector field having the required data and such that there is neither an obstruction to semicompleteness given by the resonance at the fourth point nor an invariant plane making this obstruction vanish. The most general vector field having~$z=0$ as an invariant plane, having radial orbits with exponents~$(1,2)$ at~$[1:0:0]$ and~$[0:1:0]$ (both with the exponent~$1$ along the invariant plane) and a radial orbit at~$p=[0:0:1]$ with exponents whose sum is~$-4$ is, up to a rescaling of the variables,
\begin{gather}\label{yetanothervf}\left(x^2+\alpha xz+\beta yz\right)\del{x}+\left(y^2+\gamma xz-[\alpha+2] yz\right)\del{y}+z\left(z-x-y\right)\del{z},\end{gather}
for~$\alpha, \beta, \gamma\in\mathbb{C}$. If~$\beta$ and~$\gamma$ vanish simultaneously, there are two invariant lines through~$p$, so we may suppose that~$\beta\neq 0$. The exponents at~$p$ are the roots of~$\xi^2 - 4\xi+ 3 -\big(\alpha^2 + 2\alpha+\beta\gamma\big)$ and we may thus set~$\gamma=-\alpha(\alpha+2)/\beta$ so that the exponents at~$p$ are~$(1,3)$. At~$p$, the exponent~$1$ is tangent to the line~$\ell$ given by $\alpha x+\beta y=0$. Since this line is not invariant, $\alpha(\alpha+\beta)\neq 0$.

The induced foliation on~${\mathbb{CP}}^2$ is, in the affine chart~$[u:v:1]$, given by the kernel of the form
\begin{gather*}\big(u[\alpha-1]+\beta v+2u^2+uv\big){\rm d}v-\big({-}\alpha[\alpha+2]\beta^{-1}u-[3+\alpha]v+uv+2v^2\big){\rm d}u.\end{gather*}
Doing a first blowup by $v=\big(w-\alpha\beta^{-1}\big)u$ we find the foliation
\begin{gather*}u\big(1+\big[\alpha\beta^{-1}-2\big]u-\beta w-uw\big){\rm d}w\\
\qquad{} -\big(2w-\alpha\big[1+\alpha\beta^{-1}\big]\beta^{-1}u+\big[1+2\alpha\beta^{-1}\big]uw+\beta w^2-uw^2\big){\rm d}u.\end{gather*}
A second blowup by~$w=\big(t+\alpha(\beta+\alpha)\beta^{-2}\big)u$ yields the foliation
\begin{gather*}(u+\cdots){\rm d}t-\big(t+\alpha(\alpha+\beta)\big(2\alpha^2+\alpha+2\alpha\beta+3\beta\big)\beta^{-3}u+\cdots\big){\rm d}u.\end{gather*}
The condition for the original equation to induce a linearizable foliation at~$p$ is that
\begin{gather*}\alpha(\alpha+\beta)\big(2\alpha^2+\alpha+2\alpha\beta+3\beta\big)= 0.\end{gather*}
Since~$\ell$ is not invariant, we must have
\begin{gather}\label{beta}\beta=-\frac{\alpha(2\alpha+1)}{2\alpha+3}.\end{gather}
This gives the most general vector field having the sought conditions. Set~$\alpha=\frac{1}{4}(k-1)$ so that $\beta=-\frac{1}{4}(k+1)(k-1)/(k+5)$. After rescaling the variables by~$(x,y,z)\mapsto \big({-}[k+1]^{-1}x,[k+5]^{-1}y,\frac{1}{4}z\big)$, we obtain the vector field~$Y(k)$ in equation~\ref{t2}. This vector field is still the most general one having the sought conditions, for it is nondegenerate whenever~$\beta$ is in~$\mathbb{C}\setminus\{0\}$. Let us integrate equation~\ref{t2}. The image of~$Y(k)$ under
\begin{gather*}(x,y,z)\mapsto(4z[x+y],-16z[xy+xz+yz])=(u,v)\end{gather*}
is~$v\indel{u}+6u^2\indel{v}$. The latter has the first integral~$g_3=4u^3-v^2$, from which one can obtain a~first integral for the original equation. A solution of the reduced system is~$(u,v)=(\wp(t),\wp'(t))$, for the Weierstrass function~$\wp$ such that~$(\wp')^2=4\wp^3-g_3$. Solving for~$y$ and~$z$,
\begin{gather*}y(t)=-\frac{\wp^2+x\wp'}{4x\wp+\wp'}, \qquad z(t)=\frac{4x\wp+\wp'}{4\big(4x^2-\wp\big)};\end{gather*}
substituting into~(\ref{yetanothervf}), the system reduces to
$x'(t)=-(k+1)x^2+\frac{1}{4}(k-1)\wp(t)$. For~$w=-(k+1)x$, $w'=w^2+\frac{1}{4}\big(1-k^2\big)\wp$ and the equation is semicomplete if~$6\nmid k$. These vector fields appeared in~\cite{these} as the family~$\mathcal{T}_2$.

\paragraph{Case (A), $\boldsymbol{(u_4,v_4)=(-1,-3)}$ without invariant plane.}\label{slimmo} We do not need to go through a full analysis like the one of the preceding section for, as we previously mentioned, the obstruction lies at the level of the foliation, and the vector fields of this section and of those of the previous one induce the same foliations.

For~$X(m)=mY(6[1-m]/m)+2(m-6)yE$, for the vector field~$Y(\cdot)$ of equation~\ref{t2} we obtain equation~\ref{t1}. For~$\beta$ as in~(\ref{beta}) we have~$\beta=\frac{1}{4}(7m-6)(5m-6)m^{-1}(m-6)^{-1}$: the vector field is a nondegenerate one whenever~$\beta\in\mathbb{C}\setminus\{0\}$. This vector field has radial orbits with exponents~$(1,2)$ at~$[1:0:0]$, $(-1,-2)$ at~$[0:1:0]$, and one with exponents~$(1,3)$ at~$[0:0:1]$. It induces a foliation on~${\mathbb{CP}}^2$ which is linearizable at this point (but such that the line tangent to the exponent~$1$ is not invariant). These vector fields are of Halphen type. For
\begin{gather*}C=-\frac{x^2-xz-yz}{y^2-xz-yz}\del{x}+\del{y}+\frac{z(x-y)}{y^2-xz-yz}\del{z},\end{gather*}
we have~$[C,X]=2(m-6)E$. They have the adapted function
\begin{gather*}\xi=\frac{z(x+y)^3}{(xy+yz+zx)^2},\end{gather*}
for which
\begin{gather*}-\frac{1}{\xi^2}\{\xi,t\}=\frac{1}{2}\frac{\big(1-\frac{1}{4}\big)(\xi-1)\xi-\big(1-\frac{1}{9}\big)(\xi-1)
+\big(1-\frac{1}{m^2}\big)\xi}{\xi^2(\xi-1)^2}.\end{gather*}
Thus, the vector field is semicomplete if~$m\in\mathbb{Z}\setminus\{-1,0,1\}$. This family had previously appeared in~\cite{these}, as the family~$\mathcal{T}_1$, and in~\cite{guillot-fourier}, as the family~$\mathcal{H}_{3c}$.\footnote{In the coordinates
$(\zeta_1,\zeta_2,\zeta_3)=(3z,y-2z,x-3y+4z)$, we find the vector field $-k\mathcal{H}_{3c}\big(k^{-1}\big)$, for
\begin{gather*}\mathcal{H}_{3c}(\gamma)=\zeta_1[8\gamma\zeta_1+12\zeta_2-(6\gamma-5)\zeta_3]\del{\zeta_1}+ \big[(6\gamma-1)\zeta_2^2+(2\gamma-1)(8\zeta_2+3\zeta_3)\zeta_1\big]\del{\zeta_2} \\ \hphantom{\mathcal{H}_{3c}(\gamma)=}{} +\big[(6\gamma-5)\zeta_3^2+8(3\gamma-2)(3\zeta_2+2\zeta_3)\zeta_2\big]\del{\zeta_3}.\end{gather*} (This formula corrects formula~(3.5c) in~\cite{guillot-fourier}.)}

\paragraph{Case (A), $(u_4,v_4)=(-1,-3)$, without invariant plane.}
By a discussion similar to the one of the preceding section, these vector fields belong to the family~$X(m)-8mzE$ for~$m\in\mathbb{C}$, for~$X(m)$ in equation~\ref{t1}. The exponents at the radial orbits of the invariant line~$z=0$ are those of~$X(k)$ and those at~$[0:0:1]$ are~$(-1,-3)$. For this vector field, the exponents at the radial orbit~$[2:2:1]$ are~$(3/(m+3),3(m-1)/(m+3))$, and those at the radial orbit~$\big[2m(m-6):2m(m+6):(m-6)^2\big]$ are~$(3m/(m-9),-3/(m-9))$. If~$3/(m+3)=n\in\mathbb{Z}\setminus\{0\}$, $-3/(m-9)=n/(4n-1)$ is not a nonzero integer.

\paragraph{Case (B), $\boldsymbol{(u_4,v_4)=(-1,-3)}$, without invariant plane.} \label{sliippo}
These vector fields belong to the family~$Z(k)=Y(k)-8zE$ for~$k\in\mathbb{C}$, for~$Y(k)$ of equation~\ref{t2}. The exponents at the invariant line~$z=0$ are those of~$Y(k)$ and those at~$[0:0:1]$ are~$(-1,-3)$. The exponents of~$Z(k)$ at the radial orbit~$\rho_1=[2:2:1]$ are $(-k/3,2+k/3)$, and we thus need that~$k/3\in\mathbb{Z}$. The exponents of~$Z(k)$ at the radial orbit~$\big[2(1-k):2(1+k):(1+k)^2\big]$ are~$(6/(2k+3),k/(2k+3))$. Under the assumption that~$k/3\in\mathbb{Z}$, the first one is an integer only if~$k=0$ or~$k=-3$. In the first case~$\rho_1$ has a zero exponent, which is incompatible with the regularity hypothesis; in the second, the exponents of~$\rho_1$ are~$(1,1)$, bringing us to the setting of Section~\ref{im1f}.

\subsubsection{In~(\ref{form:sil}), $\boldsymbol{1/\xi_4+1/\xi_5=1/3}$.} Only three pairs of integers satisfy this equation: $(4,12)$, $(6,6)$, and~$(2,-6)$. Choosing one of these fixes essentially five pairs of exponents.
\paragraph{If~$\boldsymbol{(\xi_4,\xi_5)=(2,-6)}$.} A radial orbit with product of exponents equal to~$2$ appears, and places us in the setting of either Section~\ref{2fnc} or~\ref{connect}.
\paragraph{If~$\boldsymbol{(\xi_4,\xi_5)=(4,12)}$ and either~$\boldsymbol{(u_4,v_4)=(2,2)}$ or~$\boldsymbol{(u_4,v_4)=(-2,-2)}$.} \label{no22}
For a radial orbit with exponents~$(2,2)$, there exist three invariant planes containing this orbit (Lemma~3.6 in~\cite{guillot-fourier}). Each one of these three invariant planes contains one further radial orbit at its intersection with the original invariant plane. From~(\ref{tess}), the sets of exponents of a nondegenerate semicomplete quadratic homogeneous vector field on~$\mathbb{C}^2$ containing the exponent~$2$ are~$(-2,1,2)$, $(2,3,6)$ and~$(2,4,4)$. Thus, for each radial orbit in the invariant line, the exponent that is not tangent to this line must belong to~$S=\{-2,1,3,4,6\}$. This does not happen: both in cases~(A) and~(B), one of these exponents is~$2$, which is not in~$S$. Likewise, considering a~radial orbit with exponents~$(-2,-2)$, the only nondegenerate semicomplete quadratic homogeneous vector field on~$\mathbb{C}^2$ having an exponent~$2$ has exponents~$(-2,1,2)$, but the exponents of the original invariant plane are incompatible with this.

\paragraph{The ``special'' case.} This is case~(B) when~$(u_4,v_4)=(2,3)$ and~$(u_5,v_5)=(2,3)$. The two radial orbits with exponents~$(2,3)$ cannot be coplanar with one with exponents~$(1,2)$ in the invariant line, and we may thus assume that these four radial orbits are in general position. The vector fields that have~$\{z=0\}$ as an invariant line and radial orbits at~$[1:q:0]$ for~$q^2=-3$, each with exponents~$(1,2)$, as in diagram~(B), and that have radial orbits at~$[1:1:2]$ and~$[1:-1:2]$, with exponents~$(2,3)$ each, form two one-parameter families. (If the position of the other radial orbit within~$z=0$ is set to be at~$[1:\beta:0]$, $\beta$ becomes a parameter.) One of them is the family of Lins Neto's vector fields (equation~\ref{linsnetovf}, see Section~\ref{sec:linsnetovf}). Vector fields in the other family are linearly equivalent to vectors fields in this one.

\paragraph{The other cases.} For all the remaining cases we can list all the possible values of~$u_4$, $v_5$, $u_5$ and~$v_5$ and calculate the value of~$r_5=\sum\limits_{i=1}^5 (1/u_i+1/v_i)$. It is equal to~$4$ in the ``special'' case and is different from~$4$ in all the other cases. Leaving the special case aside, in the solutions of the equation
\begin{gather*}\frac{1}{u_6}+\frac{1}{v_6}+\frac{1}{u_7}+\frac{1}{v_7}=4-r_5,\end{gather*}
one number~(say~$u_6$) is determined up to a finite choice by the above equation: for example, if~$r_5<4$, at least one of the summands must be positive and must belong to the interval~$[1,4/(4-r_5)]$, and so on. Once the value of~$u_6$ is determined, we may solve the three equations~$\mathrm{R}_i$ in terms of the three unknowns~$v_6$, $u_7$ and~$v_7$. We obtain the following integer solutions.\footnote{See the file \texttt{p4.sage}.} In case~(B),
\begin{gather*}(u_4,v_4)=(2,3), \qquad (u_5,v_5)=(-2,-3), \qquad (u_6,v_6)=(2,3), \qquad (u_7,v_7)=(1,-6),\end{gather*}
which, up to renumbering, falls within the setting defining the special case; in case~(A),
\begin{gather*}(u_4,v_4)=(2,3),\qquad (u_5,v_5)=(2,3),\qquad (u_6,v_6)=(2,3), \qquad (u_7,v_7)=(1,-6).\end{gather*}
This last case is not realizable. If it were, following the discussion in Section~\ref{sec:fol}, it would differ from one of the vector fields of equation~\ref{linsnetovf} by the addition of multiple of Euler's vector field~$\sum_iz_i\indel{z_i}$ by a homogeneous linear form, which must vanish at the three radial orbits with exponents~$(2,3)$~-- which cannot be coplanar~-- but not at one of the radial orbits with exponents~$(1,2)$. This is impossible.

\subsubsection{The remaining cases in~(\ref{form:sil})} We are in this case whenever~$\xi_4$, $\xi_5$, $\xi_6$ and~$\xi_7$ solve equation~(\ref{form:sil}) but where the this equation is not satisfied for any subset of the summands. The solutions to these equations are finite in number. For each solution one may determine the finitely many values that~$(u_4,v_4)$, $(u_5,v_5)$, $(u_6,v_6)$, and~$(u_7,v_7)$ may take and test, for each case, if the relations~$\mathrm{R}_i$ are fulfilled. In this way we obtain the following sets of four couples~$(u_4,v_4)$, $(u_5,v_5)$, $(u_6,v_6)$, and~$(u_7,v_7)$:\footnote{See the file \texttt{p5.sage}.}
\begin{enumerate}\itemsep=0pt
\item\label{lastcase1} $(2,2)$, $(1,6)$, $(1,-10)$, $(6,10)$,
\item $(2,2)$, $(3,5)$, $(2,12)$, $(-2,20)$,
\item $(2,2)$, $(4,4)$, $(4,8)$, $(-4,24)$,
\item\label{lastcase4} $(2,2)$, $(2,4)$, $(-6,8)$, $(-6,8)$,
\item\label{not5} $(1,4)$, $(1,20)$, $(-4,-7)$, $(-4,105)$,
\item \label{t51}$(1,4)$, $(2,3)$, $(-3,8)$, $(-3,8)$,
\item\label{t52} $(1,4)$, $(2,3)$, $(-2,7)$, $(-7,12)$.
\end{enumerate}

The first one belongs to case~(A), the rest to case~(B). For each one of these cases, we must first determine if the corresponding sets of exponents are realizable.

The argument given in Section~\ref{no22} proves that cases~(\ref{lastcase1}) through~(\ref{lastcase4}) cannot be realized. We are left with the exponents in~(\ref{not5}), (\ref{t51}) and~(\ref{t52}), all of which belong to case~(B) and have a radial orbit with eigenvalues~$(1,4)$. Let us construct vector fields having these data. Choose~$z=0$ as the invariant line, and place the radial orbits~$\rho_1$ and~$\rho_2$ with exponents~$(1,2)$ at~$[1:0:0]$ and~$[0:1:0]$. Let~$\rho_4$ and~$\rho_5$ be, respectively, radial orbits at~$[0:0:1]$ and~$[1:1:1]$. The most general vector field satisfying these conditions is
\begin{gather*}\big(ax^2+dyz+ezx\big)\del{x}+\big(by^2+fyz+gzx\big)\del{y}+z(cz-ax-by)\del{z},\end{gather*}
with~$a=h-d-e$, $b=h-f-g$, $c=h-a-b$. We would like~$\rho_4$ to be the radial orbit with exponents~$(1,4)$. The plane tangent to the exponent~$4$ cannot be invariant and contain one of the radial orbits~$\rho_1$ or~$\rho_2$, for in that case the invariant plane would need to have a third radial orbit with exponent~$4$, which cannot happen. In particular, the lines~$x=0$ and~$y=0$ cannot be simultaneously invariant. We impose the condition that~$u_4+v_4=5$ by setting $e=\frac{1}{2}(9h-3d-2f-3g)$. An explicit calculation shows that~$u_5+v_5=5$. In particular, case~(\ref{not5}) cannot be realized. The condition for~$\xi_4=4$ is that
\begin{gather}\label{unamas}2 g d+3 f g + 3 f d - 9 h f + 2 f^2 =0.\end{gather}

If~$d=-\frac{3}{2}f$, the above equation reduces to~$ f(5 f + 18 h)=0$. If~$f=0$, the condition for~$\xi_5=6$ reads~$13g^2-42hg+33h^2=0$; by solving it, we obtain equation~\ref{abel:square12-3} up to a permutation of~$x$ and~$y$ and a choice of the square root. If~$f=\frac{18}{5}h$, the condition for~$\xi_5=6$ reads~$325g^2-204gh+463h^2=0$; after solving it we find vector fields which have two nondegenerate radial orbits whose exponents are not integers.

If~$2d+3f\neq 0$, we may set $g =f(9h-3d - 2f)/ (2d + 3f)$ in order to have~(\ref{unamas}). The resulting vector fields form a variety that embeds into~${\mathbb{CP}}^2$, with homogeneous coordinates~$[d:f:h]$. The condition for the radial orbit~$\rho_4$ to be nondegenerate is that $d^2-3hd-f^2\neq 0$. We must now impose two further conditions. The first: that the exponents at~$\rho_5$ be~$(2,3)$. This can be done since the radial orbits with exponents~$(1,4)$ and~$(2,3)$ cannot be coplanar with a radial orbit in the invariant plane. The condition for this is that
\begin{gather*}36hd^2f+33h^2d^2+2f^2d^2-30f^2dh-90h^2df-2f^3d-2fd^3\\
\qquad{} -6hd^3+54h^2f^2+d^4+f^4=0.\end{gather*}
This is a rational quartic. Under the quadratic transformation
\begin{gather}\label{preratquad}d = qr + 27pr,\qquad f = qr - 18pr,\qquad h = 5pr +pq,\end{gather}
the equation becomes the quadratic one
\begin{gather}\label{ratquad}4050rp-350rq+324pq-3125r^2+q^2-28431p^2=0.\end{gather}
The second: that the induced foliation on~${\mathbb{CP}}^2$ has a linearizable singularity at~$\rho_4$. The condition for this is, after calculating it following a procedure analogous to the one done in Section~\ref{sliimmo},
\begin{multline*}(d-9h-f)(3d+2f-9h)\big(6hd+df-f^2\big)\big(d^2-f^2+3hf-hd\big) \\ \qquad{}\times \big(f^3+15f^2h-df^2+16dfh-27fh^2-d^2f-6hd^2+d^3+9h^2d\big)\\
\qquad{}\times \big(d^2-3hd+9hf-df\big)\big(27h^2d-16hd^2+d^3-9dfh-d^2f-df^2+f^3\big)=0.\end{multline*}
We may parametrize rationally equation~(\ref{ratquad}), substitute into~(\ref{preratquad}) and then substitute into this last equation which then becomes a polynomial equation in a single variable. Upon solving this equation, we obtain the vector fields in equations \ref{abel:square12-1} through~\ref{abel:burn8-2}, as well as other vector fields obtained from these by either exchanging~$x$ and~$y$ or by replacing one of the square roots in their expression with its Galois conjugate. A posteriori, equations~\ref{abel:square12-1} through~\ref{abel:square12-1} correspond to case~(\ref{t52}), equations~\ref{abel:burn8-1} through~\ref{abel:square12-2} to case~(\ref{t51}). Their integration will be carried out in the next section.

\section[Some Abelian surfaces with automorphisms and the associated equations]{Some Abelian surfaces with automorphisms\\ and the associated equations}\label{sec:abelian}

The Chazy~IX and Chazy~X equations and Lins Neto's vector fields~(equation~\ref{linsnetovf}) give examples of polynomial vector fields on affine surfaces (generic level surfaces of their homogeneous polynomial first integrals) which can be embedded as Zariski-open subsets of Abelian surfaces and such that, under this embedding, the vector fields become constant ones. The homogeneity of the first integral is associated to an automorphism of the Abelian surface, and this automorphism preserves the vector field up to a constant factor.

Other special quasihomogenous vector fields are bound to arise from Abelian surfaces having interesting cyclic automorphisms. Fujiki classified the finite automorphism groups of complex tori of dimension two~\cite{fujiki}. Table~6 in Fujiki's work features the classification of \emph{nonspecial} cyclic automorphism groups of complex tori, which will be relevant to our results.

\subsection{The Jacobian of Burnside's curve and its order eight automorphism} Consider the genus two curve~$C$ of equation~$y^2=x\big(x^4-1\big)$. It is usually known as \emph{Burnside's curve}, following Burnside's work on its uniformization~\cite{burnside}. It has an automorphism group of order~$48$; an order eight automorphism is given by~$(x,y)\mapsto \big(\lambda^2x,\lambda y\big)$, for~$\lambda$ an eighth root of unity. The Jacobian of~$C$ inherits an automorphism of order eight induced by this one. Let us follow Mumford's account on Jacobi's work concerning the algebraic construction of the Jacobian of~$C$~\cite[Chapter~IIIa]{mumford}. Consider~$\mathbb{C}^4$ with the coordinates~$(u_1,u_2,v_1,v_2)$. For
\begin{gather*}H=u_2u_1^3+u_2v_1^2-v_2^2-2u_2^2u_1,\\
K=u_1^4-2v_1v_2+u_2^2-3u_1^2u_2+u_1v_1^2,\end{gather*}
an affine chart of the Jacobian~$J$ of~$C$ is the affine variety defined by~$H\equiv 0$, $K\equiv 1$. These relations ensure that, for the polynomials~$f=x\big(x^4-1\big)$, $U=x^2+u_1x+u_2$ and~\smash{$V=v_1x+v_2$}, $U$~divides~$f-V^2$. The above order eight automorphism of~$C$ induces the order eight automorphism~$\ell$ on~$J$
\begin{gather}\label{autburnj}(u_1,u_2,v_1,v_2)\mapsto\big(\lambda^2 u_1,\lambda^4 u_2,\lambda^3 v_1,\lambda^5 v_2\big),\end{gather}
for which~$\ell^*H=\lambda^2H$, $\ell^*K=K$. The vector fields
\begin{gather}\label{vf:jb}
X= v_1\del{u_1}+v_2\del{u_2}+\frac{1}{2}\big(2u_2-3u_1^2\big)\del{v_1}+\frac{1}{2}\big(u_1^3+v_1^2-4u_2u_1\big)\del{v_2},\\
Y=v_2\del{u_1}+(v_2u_1-u_2v_1)\del{u_2}+\frac{1}{2}\big(u_1^3-4u_2u_1+v_1^2\big)\del{v_1}\nonumber\\
\hphantom{Y=}{} +\frac{1}{2}\big(u_1^4-2u_2^2-u_1^2u_2+u_1v_1^2\big)\del{v_2}\nonumber
\end{gather}
of~$J$ are preserved by the cyclic automorphism~(\ref{autburnj}) up a constant factor: $\lambda$ for the first, $\lambda^3$ for the second. When~$u_1$, $u_2$, $v_1$ and~$v_2$ are respectively given the weights~$2$, $4$, $3$ and~$5$, these vector fields become quasihomogeneous of degrees two and four. Both~$H$ and~$K$ are common first integrals of~$X$ and~$Y$ and are, respectively, quasihomogeneous of degrees~$10$ and~$8$. By homogeneity considerations, all the varieties given by~$H\equiv 0$ and~$K$ a nonzero constant, together with their vector fields, are equivalent.

We will now integrate equations~\ref{abel:burn8-1} and~\ref{abel:burn8-2} by showing that, when restricted to a generic level surface of a first integral, they are birationally equivalent to~$J$, and that this birational equivalence maps the vector field of the equation to the vector field~(\ref{vf:jb}). These equations have appeared in the analysis of other ones: if~$u_1(t)$ is part of a solution of~$X$, $y(t)=-\frac{1}{4}u_1(t)$ solves Cosgrove's reduced F-V equation~$y^{\mathrm{(iv)}}=20yy''+10(y')^2-40y^3$ (see~\cite[Section~5]{cosgrovep2}) in the particular case
\begin{gather*}(y''')^2-24y'''y'y-192y^5+80y''y^3+120(y')^2y^2+4y''(y')^2-8(y'')^2y=0,\end{gather*}
which has the first integral~$20(y')^2y-2y'''y-20y^4+(y'')^2$.

\subsubsection{Equation~\ref{abel:burn8-1}}
It has the polynomial first integral of degree~$8$
\begin{gather*}Q=z^3\big[zx-\big(1+\sqrt{2}\big)y^2+\sqrt{2}yz\big]\big[x^3+\big(\sqrt{2}-1\big)y^2z+\big(3\sqrt{2}-4\big)yz^2\\
\hphantom{Q=}{} +\big(3-2\sqrt{2}\big)xz^2+\big(3\sqrt{2}-5\big)x^2z +\big(\sqrt{2}-2\big)x^2y+\big(8-6\sqrt{2}\big)yzx\big],\end{gather*}
and commutes with a homogeneous polynomial vector field of degree~$4$ having the same first integral. From a solution~$(x,y,z)$ to this equation, setting
\begin{gather*} u_1 =4\big(1-\sqrt{2}\big)\big(x+\sqrt{2}y\big)z, \\
v_1 =8\big(\sqrt{2}-1\big)\big[\big(2-2\sqrt{2}\big)yz+\big(\sqrt{2}-2\big)zx-xy\big]z, \\
u_2 =16\big(7-5\sqrt{2}\big)\big[zx-\big(1+\sqrt{2}\big)y^2+\sqrt{2}yz\big]z^2, \\
v_2 =32\big(7-5\sqrt{2}\big)\big[zx-\big(1+\sqrt{2}\big)y^2+\sqrt{2}yz\big]xz^2\end{gather*}
we obtain a solution to the vector field~$X$ in~(\ref{vf:jb}) taking values in~$H=0$, $K=\big(4352-3072\sqrt{2}\big)Q$. An inverse to this map is given by
\begin{gather*}x=\frac{u_1^4-2u_2u_1^2+u_1v_1^2+\sqrt{2}v_1v_2}{2\big(u_1v_2+\sqrt{2}v_1u_2\big)},\qquad
y=\frac{\sqrt{2}v_2+v_1u_1}{2\big(u_1^2-2u_2\big)}, \\ z=-\frac{\big(1+\sqrt{2}\big)\big(u_1v_2+\sqrt{2}v_1u_2\big)\big(u_1^2-2u_2\big)}
{2\big(2v_2^2-4u_1^3u_2+4u_2^2u_1+u_1^5+u_1^2v_1^2+2\sqrt{2}u_1v_1v_2\big)}.\end{gather*}
To integrate this equation, based on the homogeneity properties of both equations, we looked for solutions to~(\ref{vf:jb}) of the form~$u_1=P(x,y,z)$, with~$P$ a quadratic homogeneous polynomial. (The vector field~$X$ determines~$v_1$ from the derivative of~$u_1$, $u_2$ from the derivative of~$v_1$, $v_2$ from the derivative of~$u_2$, and establishes an algebraic equation for~$P$ from the derivative of~$u_2$.)

\subsubsection{Equation~\ref{abel:burn8-2}}
The vector field has the homogeneous first integral
\begin{gather*}
Q=\big[x^3z-8\big(1+\sqrt{2}\big)x^2y^2-\big(2\sqrt{2}+3\big)y^3z-4yz^3+\big(12+8\sqrt{2}\big)xz^3-\big(6\sqrt{2}+10\big)x^2z^2\\
\hphantom{Q=}{} +\big(6+2\sqrt{2}\big)y^2z^2+\big(12\sqrt{2}+11\big)zxy^2+\big(15+14\sqrt{2}\big)zx^2y-20\big(1+\sqrt{2}\big)z^2xy\big] \\
\hphantom{Q=}{}\times \big[x+\big(2\sqrt{2}-3\big)y\big]z^3
\end{gather*}
and commutes with a homogeneous polynomial vector field of degree~$4$. From a solution~$(x,y,z)$ to the equation, setting
\begin{gather*}
u_1 =16\big(17-12\sqrt{2}\big)\big[\big(2\sqrt{2}-2\big)y^2+\big(3-2\sqrt{2}\big)yz -zx\big], \\
u_2 =512\big(\sqrt{2}-1\big)\big[x+\big(2\sqrt{2}-3\big)y\big](y-z)^2z, \\
v_1 = \frac{64}{7}\big(6-5\sqrt{2}\big)\big[7\big(3-2\sqrt{2}\big)y^2z-7yzx+ \big(5\sqrt{2}-8\big)yz^2\\
\hphantom{v_1 =}{} + \big(4+\sqrt{2}\big)z^2x+\big(8\sqrt{2}-10\big)y^3\big], \\
v_2 =1024\big(3\sqrt{2}-4\big)\big(2y^2-3yz+zx\big)\big[x+\big(2\sqrt{2}-3\big)y\big](y-z)z,\end{gather*}
we obtain a solution to~(\ref{vf:jb}) for~$H=0$ and~$K=65536\big(17-12\sqrt{2}\big)Q$. This solution has an inverse: by setting
\begin{gather*}y=\frac{\big(2+\sqrt{2}\big)\big(u_1v_1-\sqrt{2}v_2\big)}{8\big(u_1^2-2u_2\big)},\end{gather*}
we can easily find~$x$ and~$z$ from~$u_1$ and~$v_1$. (The integration of this equation was obtained in the same way as the previous one.)

\subsection{The anharmonic square with an automorphism of order twelve}
For~$\rho^3=1$, let~$\Lambda=\langle 1,\rho\rangle$ and let~$E_\rho$ be the elliptic curve~$\mathbb{C}/\Lambda$. It has an automorphism of order three given by multiplication by~$\rho$. The Abelian variety~$E_\rho\times E_\rho$ has the cyclic automorphism of order twelve induced by the linear mapping~$\ell$ of~$\mathbb{C}^2$, $\ell(u,v)=\big(\rho^2 v,-\rho^2 u\big)$, the composition of the commuting mappings of orders three and four~$(u,v)\mapsto\big(\rho^2 u, \rho^2 v\big)$ and~$(u,v)\mapsto(v, -u)$. Let~$\wp$ be the Weierstrass elliptic function of~$E_\rho$, and let~$g_3\in \mathbb{C}^*$ be such that~$(\wp')^2=4\wp^3-g_3$. The field of meromorphic functions of~$E_\rho\times E_\rho$ is generated by~$\wp(u)$, $\wp'(u)$, $\wp(v)$ and $\wp'(v)$. Since~$\wp(\rho t)=\rho \wp(t)$ and~$\wp'(\rho t)=\wp'(t)$, the cyclic group generated by~$\ell$ acts linearly upon the vector space generated by these functions. A basis of eigenvectors is given by
\begin{gather*}a=\wp(u)+\wp(v), \qquad b=\wp(u)-\wp(v), \qquad c=\wp'(u)+{\rm i}\,\wp'(v), \qquad d=\wp'(u)-{\rm i}\,\wp'(v).\end{gather*}

We have that~$\ell^*(a,b,c,d)=\big(\lambda^8 a,\lambda^2 b,\lambda^9 c,\lambda^3 d\big)$ for~$\lambda=-{\rm i}\rho$.

We can embed a Zariski-open subset of~$E_\rho\times E_\rho$ into~$\mathbb{C}^4$ via~$(a,b,c,d)$. From the relation~$(\wp')^2=4\wp^3-g_3$, $g_3=\frac{1}{2}(a+b)^3-\frac{1}{4}(c+d)^2$ and~$g_3=\frac{1}{2}(a-b)^3+\frac{1}{4}(c-d)^2$. From the difference and sums of the right-hand sides we have the polynomials
\begin{gather*}\Delta=3a^2b+b^3-\frac{1}{2}\big(c^2+d^2\big), \qquad 2G_3=a^3+3ab^2-cd.\end{gather*}
The image of the Zariski-open subset of~$E_\rho\times E_\rho$ under the above-defined map is the variety~$A$ given by~$\Delta\equiv 0$ and~$G_3\equiv g_3$. The order 12 automorphism of~$E_\rho\times E_\rho$ extends to~$\mathbb{C}^4$ as the linear automorphism~$(a,b,c,d)=\big(\lambda^8 a,\lambda^2 b,\lambda^9 c,\lambda^3 d\big)$, which preserves~$A$ since~$\ell^*\Delta=-\Delta$ and~$\ell^*G_3=G_3$. The vector fields~$D_{\pm}=\indel{u}\pm {\rm i}\indel{v}$ of~$\mathbb{C}\times\mathbb{C}$ are preserved, up to a constant multiplicative factor, by~$\ell$: $\ell_*D_+=\lambda D_+$, $\ell_*D_-=\lambda^7 D_-$. Through the embedding, the vector fields corresponding to~$D_+$ and~$D_-$ are, respectively,
\begin{gather*}X_+=c\del{a}+d\del{b}+6ab\del{c}+3\big(a^2+b^2\big)\del{d},\\
 X_-=d\del{a}+c\del{b}+3\big(a^2+b^2\big)\del{c}+6ab\del{d}.\end{gather*}
Both~$\Delta$ and~$G_3$ are first integrals of these. If we give~$a$ and~$b$ the weight~$2$ and~$c$ and~$d$ the weight~$3$, $\Delta$ and~$G_3$ are quasihomogeneous of degree six, and~$D_+$ and~$D_-$ are quasihomogeneous of degree two. There is another interesting graduation for these objects taking values in~$\mathbb{Z}/12\mathbb{Z}$. If we give~$a$, $b$, $c$ and~$d$ the weights~$8$, $2$, $9$ and~$3$ in~$\mathbb{Z}/12\mathbb{Z}$, respectively, $X_+$ (resp.~$X_-$) is quasihomogeneous of degree~$2$ (resp.~$8$), while~$\Delta$ and~$G_3$ are quasihomogeneous of degrees~$6$ and~$0$.

We will integrate equations~\ref{abel:square12-1}, \ref{abel:square12-2} and~\ref{abel:square12-3} by showing that, when restricted to a~level set of a polynomial first integral, they are birationally equivalent to this algebraic model of~$E_\rho\times E_\rho$ with the vector field~$X_+$.

\subsubsection{Equation~\ref{abel:square12-1}}
It has the homogeneous polynomial first integral of degree~12
\begin{gather*}Q=\big[\big(20+12\sqrt{3}\big)xz^3-\big(30+19\sqrt{3}\big)x^2z^2+\big(6+6\sqrt{3}\big)x^3z- \big(36+20\sqrt{3}\big)z^3y \\
\hphantom{Q=}{} -\big(2+\sqrt{3}\big)z^2y^2+\big(66+36\sqrt{3}\big) xz^2y-\big(14\sqrt{3}+26\big)x^2zy+2x^4\big]\\
\hphantom{Q=}{}\times \big[zx-\sqrt{3}yz+\big(\sqrt{3}-1\big)y^2\big]^2z^4,\end{gather*}
and commutes with a vector field of degree eight having the same first integral.\footnote{While the degree of the first integral and the fact that the vector field commuted with a homogeneous polynomial pointed towards a birational equivalence with the Chazy~X equation, the degree of the commuting polynomial ruled this out.} From a~solution~$(a,b,c,d)$ to~$X_+$,
\begin{gather*} x=\frac{c}{2a},\qquad y=-\frac{\big(3+\sqrt{3}\big)c^4+48b^2a^4-6\big(4+\sqrt{3}\big)a^2bc^2+2\sqrt{3}da^3c}{2a\big[\big(18+4\sqrt{3}\big)cba^2
-\big(3+\sqrt{3}\big)c^3-6da^3\big]}, \\
z=\frac{\big(3-\sqrt{3}\big)\big[\big(18+4\sqrt{3}\big)cba^2-\big(3+\sqrt{3}\big)c^3-6da^3\big]}{12 \big(6ba^2-c^2\big)a}\end{gather*}
gives a solution to equation~\ref{abel:square12-1}. Reciprocally, from a solution to this equation in the level set~$Q\equiv Q_0$, for~$\theta^2=\big(26+15\sqrt{3}\big)/Q$ ($\theta$ is quasihomogeneous of degree~$-6$ and is constant along the solution),
\begin{gather*}a = \frac{\theta}{2}\big(5-3\sqrt{3}\big)\big[6\big(11+6\sqrt{3}\big)z^2xy-2\big(13+7\sqrt{3}\big)x^2zy+ 6\big(\sqrt{3}+1\big)x^3z\\
\hphantom{a=}{}- \big(19\sqrt{3}+30\big)z^2x^2 +2x^4-\big(\sqrt{3}+2\big)z^2y^2+ 4\big(3\sqrt{3}+5\big)xz^3- 4\big(9+5\sqrt{3}\big)z^3y\big]\\
\hphantom{a=}{}\times \big[zx-\sqrt{3}yz+\big(\sqrt{3}-1\big)y^2\big]z^2,\\
b = \frac{1}{2}\big[2x^2-\big(\sqrt{3}+3\big)yz+\big(\sqrt{3}+1\big)zx\big], \\
c = 2xa,\\
d = 2x^3-\big(11+5\sqrt{3}\big)xyz+3\big(1+\sqrt{3}\big)x^2z+3\big(4+2\sqrt{3}\big)yz^2-2\big(3+2\sqrt{3}\big)xz^2,\end{gather*}
we obtain a solution to~$X_+$ taking values in~$\Delta\equiv 0$, $G_3^2\equiv \big( 26+15\sqrt{3}\big)Q_0$.\footnote{In order to integrate this equation, from a solution~$(x,y,z)$ to equation~\ref{abel:square12-1}, we looked for a solution of~$X_+$ where~$b$ was a quadratic polynomial~$P(x,y,z)$. Through~$X_+$, this determined~$d$, which, in turn, determined~$a^2$. The logarithmic derivative~$\xi=a'/a$ (which can be calculated from~$a^2$) satisfies the equation~$\xi'=6b-\xi^2$, and this established an algebraic relation for~$P$. From~$2G_3=a\big(a^2+3b^2+\xi\big)$ we determined a value of~$a$, completing the solution.}

\subsubsection{Equation~\ref{abel:square12-2}} The integration is similar to that of the previous equation. It has the homogeneous polynomial first integral of degree~12
\begin{gather*}Q=\big[4\sqrt{3}zx+8\big(1-\sqrt{3}\big)yz-x^2\big] \\
\hphantom{Q=}{}\times \big[6zyx-2\big(1-\sqrt{3}\big)yz^2+\big(\sqrt{3}-2\big)xy^2-2x^2z -\sqrt{3}xz^2-2\sqrt{3}y^2z\big]^2z^4\end{gather*}
and commutes with a vector field of degree~8 having the same first integral. For a solution~$(x,y,z)$ to equation~\ref{abel:square12-2} and for~$\theta^2=-1/Q$, by setting
\begin{gather*}a =- \theta\big[8\big(1-\sqrt{3}\big)yz+4\sqrt{3}zx-x^2\big](x-z) \\ \hphantom{a=}{}\times\big[6zyx-2\big(1-\sqrt{3}\big)yz^2-2\sqrt{3}y^2z - \sqrt{3}xz^2+\big(\sqrt{3}-2\big)xy^2-2x^2z\big]z^2, \\
b=x^2-\big(3+2\sqrt{3}\big)zx+2\big(1+\sqrt{3}\big)yz,\\
c=-2\theta \big[\big(3+\sqrt{3}\big)zx-\big(2+\sqrt{3}\big)yz-x^2\big] \big[8\big(1-\sqrt{3}\big)yz+4\sqrt{3}zx-x^2\big]\\
\hphantom{c=}{}\times \big[2\big(\sqrt{3}-1\big)yz^2-2\sqrt{3}y^2z-\sqrt{3}xz^2+6zyx+ \big(\sqrt{3}-2\big)xy^2-2x^2z\big]z^2,\\
d=2\big[3\big(1+\sqrt{3}\big)x^2z-3\big(1+\sqrt{3}\big)z^2x+4\sqrt{3}yz^2+\big(1-4\sqrt{3}\big)zyx- x^3\big],\end{gather*}
we obtain a solution of~$X_+$. For the inverse, we can easily express~$y$ and~$z$ in terms of~$b$, $d$ and~$x$. Setting
\begin{gather*}x= \frac{\big(1+\sqrt{3}\big)\big(bd-\sqrt{3}ca\big)}{3a^2+b^2},\end{gather*}
completes the solution.

\subsubsection{Equation~\ref{abel:square12-3}} It has a homogeneous polynomial first integral of degree 12 and commutes with a homogeneous polynomial vector field of degree~$8$ having the same first integral. Under the birational mapping (homogeneous of degree one)
\begin{gather*}(x,y,z)\mapsto \left( \frac{x^2z+(2-\sqrt{3})(2z-6x)yz+(3-2\sqrt{3}
)(xz-y^2)z-(4\sqrt{3}-7)xy^2}{[x+\big(1-\sqrt{3}\big)y-(2-\sqrt{3})z](y-z)}\right.,\\ \left.
\hphantom{(x,y,z)\mapsto}{} \frac{(\sqrt{3}-3)yz+xz+(2-\sqrt{3})y^2}{y-z}, \frac{z[x+\big(1-\sqrt{3}\big)y+\big(\sqrt{3}-2\big)z]}{y-z}\right)\end{gather*}
the vector field of equation~\ref{abel:square12-3} gets mapped to the one of equation~\ref{abel:square12-1}. This proves that its restriction to a level surface of its first integral is birationally equivalent to~$D_+$ on~$A$.

\section{Some Riccati equations with rational coefficients}
We will now study the Riccati equations with rational coefficients appearing in our investigations. They are of the form~$y'=y^2+A(t)$ with~$A$ a rational function such that each one of its poles is a double one. In~\cite[Section~3]{guillot-riccati}, we have stated conditions that, in some cases, guarantee that all the solutions to such equations are meromorphic in the neighborhood of these double poles. We will use these conditions to investigate the settings corresponding to equations having exclusively single-valued solutions.

\subsection{First equation}\label{sec:ricrac1} We begin with the equation $z'=z^2+t^{-2}F(t)$ for
\begin{gather}\label{1striccati}F(t)=\frac{1}{4}\left[m^2\big(1-q^2\big)\left(\frac{t^{m}}{t^{m}-1}\right)^2-\big[r^2-p^2+m^2 \big(1-q^2\big)\big]\left(\frac{t^{m}}{t^{m}-1}\right)+\big(1-p^2\big) \right],\end{gather}
which appeared in Section~\ref{1ricrat}. We affirm that~\emph{if~$m\nmid p$ and for some~$j\in\{0,\ldots,q-1\}$, either~$p-r=(q-1-2j)m$ or~$p+r=(q-1-2j)m$, all the solutions are meromorphic} (notice that if both conditions are satisfied we also have that~$m\nmid r$).

We begin by investigating the behavior of the solutions in the neighborhood of~$t=0$. Let~$\rho$ be an~$m$\textsuperscript{th} root of unity. Since~$F(\rho t)=F(t)$ and~$F(0)=\frac{1}{4}\big(1-p^2\big)$, Lemma~3.1 in~\cite{guillot-riccati} allows us to conclude that \emph{if~$m\nmid p$ all the solutions are meromorphic in the neighborhood of~$t=0$.}

We now study the solutions in the neighborhood of the values of~$t$ which are~$m$\textsuperscript{th} roots of unity. The mapping~$(t,z)\mapsto\big(\rho^{-1} t,\rho z\big)$ is a symmetry of the equation. Since it permutes the~$m$\textsuperscript{th} roots of unity, the behavior of the solutions at all of them is the same: if in the neighborhood of~$t=1$ all the solutions to the equation are meromorphic then in the neighborhood of the other~$m$\textsuperscript{th} roots of unity all the solutions to the equation will be meromorphic as well. In order to consider the quotient by this symmetry, let~$w=tz$, $\tau=t^m$ (they are both invariant under the above symmetry). Let~$G$ be the function such that~$G(t^m)=F(t)$. We have
\begin{gather*}\frac{{\rm d}w}{{\rm d}\tau}=\frac{1}{m\tau}\big(w^2+w+G(\tau)\big).\end{gather*}
Notice that since the mapping~$(t,z)\mapsto(t^m,tz)$ does not ramify along~$t=1$, the solutions of this last equation in the neighborhood of~$\tau=1$ correspond to those of~(\ref{1striccati}) at~$t=1$. For~$s=1-\tau$, $q>0$ and
\begin{gather}y=\frac{1}{m(s-1)}\left(w+\frac{1}{2}[1-m]\right)s+\frac{1}{2}(1-q),\nonumber\\
 \label{lastform}s\frac{{\rm d}y}{{\rm d}s}=y^2+qy-\frac{1}{4m^2}\left(\frac{m^2q^2-r^2}{1-s}+\frac{p^2-m^2}{(1-s)^2}\right)s.\end{gather}
In order to prove that in the neighborhood of~$s=0$ all the solutions to this last equation are meromorphic, it suffices to show that this equation has a formal solution vanishing at~$0$~\cite[Section~3.1]{guillot-riccati}.
By setting~$r=mR$, $p=mP$ and~$y=-s(v'/v+\frac{1}{2}(P+1)/(s-1))$ for a solution~$y$ to this last equation, we find that~$v$ is a solution of the hypergeometric equation
\begin{gather*}s(1-s)v''+[c-(a+b+1)s]v'-abv=0,\end{gather*}
for~$a = \frac{1}{2}(P+R +1- q)$, $b = \frac{1}{2}(P-R+1 - q)$ and~$c = 1 - q$. If~$v=\sum\limits_{i=0}^\infty v_is^i$ is a solution to this equation then for every~$i$
\begin{gather*}(i+1)(i+c)v_{i+1}=(i+a)(i+b)v_i.\end{gather*}
The condition to have a non-identically zero formal solution is that for some~$j\in\{0,\ldots,q-1\}$, either~$q-1-P+R=2j$ or~$q-1-P-R=2j$. In such cases, which correspond to the second one of the required conditions, we obtain a formal solution to~(\ref{lastform}) vanishing at~$0$.

Thus, when the stated conditions are satisfied, all the solutions to~(\ref{1striccati}) will be meromorphic in the neighborhood of the poles of the equation, and all the solutions will be single-valued. In order to prove that the solutions are, moreover, rational, we must understand the behavior of the solutions of~(\ref{1striccati}) at~$t=\infty$. By setting~$t=1/\tau$ and~$w=-t^2z-t$ in~(\ref{1striccati}), we have that~${\rm d}w/{\rm d}\tau=w^2+\tau^{-2}F\big(\tau^{-1}\big)$. Since~$F\big(\tau^{-1}\big)|_{\tau=0}=\frac{1}{4}\big(1-r^2\big)$ and~$F\big((\rho\tau)^{-1}\big)=F\big(\tau^{-1}\big)$, by Lemma~3.1 in~\cite{guillot-riccati}, the solutions are meromorphic in the neighborhood of~$\tau=0$ if~$m\nmid r$, a~condition implied by the two conditions that we already have. The solutions of~(\ref{1striccati}) are rational, for they are meromorphic at~$t=\infty$.

\subsection{Second equation}\label{sec:ricrac2}
We will now study the equations that appeared in Section~\ref{2neg2lines} (cases~$\kappa=-1$ and~$\kappa=2$), the Riccati equations
\begin{gather}\label{secondriccati}y'=y^2+\frac{1}{4}\big(1-q^2\big)\left[\frac{(u')^2-2uu''}{u^2}\right]+\frac{1}{8}\big(1-n^2\big)\frac{u''}{u},\end{gather}
where~$q,n\in\mathbb{Z}$, $q^2\neq 1$, with~$u$ a quadratic polynomial in~$t$. We claim that \emph{all the solutions of these equations are meromorphic if~$n$ is odd and~$n<2q$}.

The right-hand side the above equation has double poles at the zeros of~$u$ (even if they are double ones). In order to prove that all the solutions to the above equation are single-valued, it is sufficient to guarantee the existence of many local solutions defined in a neighborhood of the poles.

Multiplying~$u$ by a constant leaves the equation unchanged, so the parameters of the equation are actually the roots of~$u$. If~$y(t)$ is a solution to the above equation, $ay(at+b)$ will be a solution to the equation corresponding to replacing~$u$ by~$u(at+b)$. There are two cases to consider, the one where the roots of~$u$ are simple and the one where they are double.

If~$u$ has two different roots, we may suppose that~$u=t(t-1)$, so that the equation becomes
\begin{gather}\label{segundaultima}y'=y^2+\frac{1}{4}\big(1-q^2\big)\frac{1}{t^2(t-1)^2}+\frac{1}{4}\big(1-n^2\big)\frac{1}{t(t-1)}.\end{gather}
The situations at both poles are equivalent (for we may replace~$t$ by~$1-t$ following the previous discussion), so we may concentrate exclusively in the one at~$t=0$. For~$q>0$, in the variable~$z=ty+\frac{1}{2}(1-q)$, we have the equation
 \begin{gather}\label{riclast}t z' =z^2+q z -\frac{1}{4}\left(\frac{q^2-n^2}{1-t}+\frac{q^2-1}{(1-t)^2}\right)t.\end{gather}
Again, in order to prove that in the neighborhood of~$t=0$ all the solutions to equation~(\ref{riclast}) are meromorphic, it suffices to show that there is a formal solution vanishing at~$0$~\cite[Section~3.1]{guillot-riccati}. But this equation is a special case of equation~(\ref{lastform}) for~$m=1$, $p=q$ and $r=n$. Our previous analysis proves that this equation has a formal solution if~$n$ is odd and~$n<2q$ (supposing that both of them are positive). In these situations, all the solutions to~(\ref{segundaultima}) are meromorphic in the neighborhood of~$t=0$ and~$t=1$ and are thus single-valued.

In order to prove that the solutions are rational, we need to understand the behavior of the solutions of~(\ref{segundaultima}) at~$t=\infty$. By setting~$t=\frac{1}{2}(s-1)/s$ and~$w=\frac{1}{2}(1-2t)^2y+(2t-1)$, we find
\begin{gather*}\frac{dw}{ds}=w^2+\frac{\big(3-4q^2+n^2\big)s^2+\big(1-n^2\big)}{4s^2\big(1-s^2\big)^2}.\end{gather*}
The right-hand side is invariant under the involution~$s\mapsto -s$. Lemma~3.1 in~\cite{guillot-riccati} allows us to conclude that this last equation has meromorphic solutions at~$s=0$ whenever~$n$ is odd. This proves that, under the stated conditions, the solutions to the original equations are rational, since they are meromorphic at infinity.

If in~(\ref{secondriccati}) $u$ has a double root, the solutions will also be single-valued under the required conditions, for the set of polynomials~$u$ giving exclusively single-valued solutions, which is a~closed one, contains, as we already proved, the polynomials with different roots. We can also establish this directly: for~$u=t^2$, the equation becomes simply
\begin{gather*}y'=y^2+\frac{1-n^2}{4t^2},\end{gather*}
which has, for~$c\neq 0$, the solutions
\begin{gather*}y=\frac{1-n-c(1+n)t^n}{2t\big(ct^n-1\big)},\end{gather*}
which are single-valued whenever~$n\in\mathbb{Z}$.

\subsection*{Acknowledgements}
The author gratefully acknowledges support from grant PAPIIT~IN102518 (UNAM, Mexico). He thanks the referees for their thorough reading and helpful comments and suggestions.

\LastPageEnding

\end{document}